\newtheorem{theorem}{Theorem}[section]
\newtheorem{proposition}[theorem]{Proposition}
\newtheorem{lemma}[theorem]{Lemma}
\newtheorem{corollary}[theorem]{Corollary}
\newtheorem{remark}[theorem]{Remark}
\newtheorem{example}[theorem]{Example}
\newcommand{\wbar}{\overline w}
\newcommand{\defeq}{:=}
\providecommand{\norms}[1]{\|{#1}\|}
\newcommand{\normsmall}[1]{\|{#1}\|}
\newcommand{\thetahat}{\widehat\theta}
\newcommand{\zhat}{\widehat z}
\newcommand{\normal}{N}
\newcommand{\thetastar}{\theta^*}
\newcommand{\order}{\mathcal{O}}
\newcommand{\ltwos}[1]{\normsmall{#1}_2}
\newcommand{\E}{{\mathbb E}}
\newcommand{\Q}{{\mathbb Q}}
\newcommand{\R}{{\mathbb R}}
\renewcommand{\P}{{\mathbb P}}
\renewcommand{\Q}{{\mathbb Q}}
\newcommand{\ac}{{\mathcal{A}}}
\newcommand{\A}{{\mathcal{A}}}
\newcommand{\C}{{\mathcal{C}}}
\newcommand{\dr}{{\mathfrak{d}}}
\newcommand{\Ps}{{\mathcal{P}}}
\newcommand{\samp}{{\mathcal{X}}}
\newcommand{\Rmx}{R_{\rm minimax}}
\newcommand{\Rnm}{R_{\rm normalized}}
\newcommand{\RNum}[1]{\uppercase\expandafter{\romannumeral #1\relax}}
\newcommand{\argmin}{\mathop{\mathrm{arg\,min}{}}}
\newcommand{\argmax}{\mathop{\mathrm{arg\,max}{}}}
\newcommand{\Iu}{I_f^{\rm up}}
\newtheorem{claim}{Claim}
\def\qt#1{\quad\text{#1}}
\def\argmin{\mathop{\rm argmin}}
\def\argmax{\mathop{\rm argmax}}
\newcommand{\Thetabar}{\overline \Theta}
\newcommand{\RBayes}{R_{\rm Bayes}}
\newcommand{\Lest}{L_{\rm est}}
\newcommand{\Lpre}{L_{\rm pre}}
\newcommand{\ball}{\mathbb{B}}
\newcommand{\tr}{{\rm tr}}
\newcounter{rcnt}[section]
\def\qt#1{\quad\text{#1}}
\def\argmin{\mathop{\rm argmin}}
\def\argmax{\mathop{\rm argmax}}
\definecolor{DSgray}{cmyk}{0,1,0,0}
\begin{document}

\title{On Bayes risk lower bounds \\}
\date{}
 
\author{Xi Chen \\ New York University \\xchen3@stern.nyu.edu \and Adityanand Guntuboyina \\ UC Berkeley \\ aditya@stat.berkeley.edu \and Yuchen Zhang \\ Stanford University \\zhangyuc@cs.stanford.edu}

\maketitle

\begin{abstract}
This paper provides a general technique for lower bounding the Bayes risk of statistical estimation, applicable to arbitrary loss functions and arbitrary prior distributions. A lower bound on the Bayes risk not only serves as a lower bound on the minimax risk, but also characterizes the fundamental limit of any estimator given the prior knowledge. Our bounds are based on the notion of $f$-informativity~\citep{Csiszar:72}, which is a function of the underlying class of probability measures and the prior. Application of our bounds requires upper bounds on the $f$-informativity, thus we derive new upper bounds on $f$-informativity which often lead to tight Bayes risk lower bounds. Our technique leads to generalizations of a variety of classical minimax bounds (e.g., generalized Fano's inequality). Our Bayes risk lower bounds can be directly applied to several concrete estimation problems, including Gaussian location models, generalized linear models, and principal component analysis for spiked covariance models.  To further demonstrate the applications of our Bayes risk lower bounds to machine learning problems, we present two new theoretical results: (1) a precise characterization of the minimax risk of learning spherical  Gaussian mixture models under the smoothed analysis framework, and (2) lower bounds for the Bayes risk under a natural prior for both the prediction and estimation errors for high-dimensional sparse linear regression under an  improper learning setting.
\end{abstract}

\section{Introduction}


Consider a standard setting where we observe data points $X$ taking values in a sample space $\samp$. The distribution of $X$ depends on an unknown parameter $\theta\in \Theta$ and is denoted by $P_{\theta}$. The goal is to compute an estimate of $\theta$ based on the observed samples. Formally, we denote the estimator by $\dr(X)$, where $\dr: \samp\to \Theta$ is a mapping from the sample space to the parameter space. The risk of the estimator is defined by $\E_{\theta} L(\theta, \dr(X))$ where $L: \Theta \times \mathcal{A} \mapsto [0, \infty)$ is a non-negative loss function. This framework applies to a broad scope of machine learning problems. Taking sparse linear regression as a concrete example, the data $X$ represents the design matrix and the response vector; the parameter space is the set of sparse vectors; the loss function can be chosen as a squared loss.

Given an estimation problem, we are interested in the lowest possible risk achievable by any estimator, which will be useful in justifying the potential of improving existing algorithms. The classical notion of optimality is formalized by the so-called \emph{minimax risk}. More specifically, we assume that the statistician chooses an optimal estimator $\dr$, then the adversary chooses the worst parameter $\theta$ by knowing the choice of $\dr$. The minimax risk is defined as:
\begin{eqnarray}
  R_{\rm minimax}(L; \Theta):= \inf_{\dr} \sup_{\theta \in \Theta} \E_{\theta} L(\theta, \dr(X)).
  \label{eq:R_minimax}
\end{eqnarray}
The minimax risk has been determined up to multiplicative constants for many  important problems. Examples include sparse linear regression~\citep{raskutti2011minimax}, classification~\citep{yang1999minimax}, additive models over kernel classes~\citep{raskutti2012minimax},  and crowdsourcing~\citep{zhang2014spectral}.

The assumption that the adversary is capable of choosing a worst-case parameter is sometimes over-pessimistic. In practice, the parameter that incurs a worst-case risk may appear with very small probability. To capture the hardness of the problem with this prior knowledge, it is reasonable to assume that the true parameter is sampled from an underlying prior distribution $w$. In this case, we are interested in the \emph{Bayes risk} of the problem. That is, the lowest possible risk when the true parameter is sampled from the prior distribution:
\begin{equation}
  R_{\rm Bayes}(w, L; \Theta) := \inf_{\dr} \int_{\Theta} \E_{\theta} L(\theta, \dr(X)) w(\mathrm{d}\theta).
 \label{eq:R_Bayes}
\end{equation}
If the prior distribution $w$ is known to the learner, then the Bayes estimator attains the Bayes risk~\citep{berger2013statistical}. But in general, the Bayes estimator is computationally hard to evaluate, and the Bayes risk has no closed-form expression. It is thus unclear what is the fundamental limit of estimators when the prior knowledge is available.

In this paper, we present a technique for establishing lower bounds on the Bayes risk for a general prior distribution $w$. When the lower bound matches the risk of any existing algorithm, it captures the convergence rate of the Bayes risk.
The Bayes risk lower bounds are useful for three main reasons:
\begin{enumerate}
  \item  They provide an idea of the difficulty of the problem under a specific prior $w$.

  \item They automatically provide lower bounds for the minimax risk and,  because the minimax regret is always larger than or equal to the minimax risk (see, for example, \cite{rakhlin2013empirical}), they also yield lower bounds for the  minimax regret.


  \item  As we will show, they have an important application in establishing the minimax lower bound  under the  \emph{smoothed analysis framework}. 
\end{enumerate}
Throughout this paper, when the loss function $L$ and the parameter space $\Theta$ are clear from the context, we simply denote the Bayes risk by $R_{\rm Bayes}(w)$. When the prior $w$ is also clear, the notation is further simplified to $R$.

\subsection{Our Main Results}\label{mrin}

In order to give the reader a flavor of the kind of results proved in this paper, let us consider Fano's classical inequality \citep{Han:94,Cover:IT:06, yu1997assouad}  which is one of the most widely used Bayes risk lower bounds in statistics and  information theory. The standard version of Fano's inequality applies to the case when $\Theta = \ac = \{1, \dots, N\}$ for some positive integer $N$ with the indicator loss  $L(\theta, a) := \mathbb{I}\{\theta \neq a\}$ ($\mathbb{I}$ stands for the zero-one valued indicator function) and the prior $w$ being the discrete uniform distribution on $\Theta$. In this setting, Fano's inequality states that
\begin{equation}\label{eq:classcial_Fano}
  R_{\rm Bayes}(w) \geq 1 - \frac{I(w, \mathcal{P}) + \log 2}{\log N}
\end{equation}
where $I(w, \mathcal{P})$ is the mutual information between the random variables $\theta \sim w$ and $X$ with $X \vert \theta \sim P_{\theta}$ (note that this mutual information only depends on $w$ and $\Ps=\{P_{\theta} : \theta \in \Theta\}$ which is why we denote it by $I(w, \Ps)$). Fano's inequality implies  that when $I(w; \mathcal{P})$ is large i.e., when the information that $X$ has about $\theta$ is large, then the risk of estimation is small.

A natural question regarding Fano's inequality, which does not seem to have been asked until very recently, is the following: does there exist an analogue of \eqref{eq:classcial_Fano} when $w$ is not necessarily the uniform prior and/or when $\Theta$ and $\ac$ are arbitrary sets, and/or when the loss function is not necessarily $\mathbb{I}\{\theta \neq a\}$? An interesting result in this direction is the following inequality which has been recently proved by \cite{Duchi:Fano} who termed it the continuum Fano inequality. This inequality applies to the case when $\Theta = \ac$ is a subset of Euclidean space with finite strictly positive Lebesgue measure, $L(\theta, a) = \mathbb{I}\{\|\theta - a\|_2 \geq \epsilon\}$ for a fixed $\epsilon > 0$ ($\|\cdot\|_2$ is the  usual Euclidean metric) and the prior $w$ being the uniform probability measure (i.e., normalized Lebesgue measure) on $\Theta$. In this setting, \cite{Duchi:Fano} proved that
\begin{equation}\label{cf}
  R_{\rm Bayes}(w) \geq 1 + \frac{I(w, \mathcal{P}) + \log 2}{\log \left( \sup_{a \in \ac}
      w\{\theta \in \Theta : \|\theta - a\|_2 < \epsilon\}  \right)}.
\end{equation}

It turns out that there is a very clean connection between inequalities \eqref{eq:classcial_Fano} and \eqref{cf}. Indeed, both these inequalities are special instances of the  following inequality:
\begin{equation}\label{intro_gf}
  R_{\rm Bayes}(w) \geq 1 + \frac{I(w, \mathcal{P}) + \log 2}{\log \left( \sup_{a \in \ac}
      w\{\theta \in \Theta : L(\theta, a) = 0\} \right)}
\end{equation}
Indeed, the term $w\{\theta \in \Theta : L(\theta, a) = 0\}$ equal to  $1/N$  in the setting of \eqref{eq:classcial_Fano} and it is equal to $w\{\theta \in \Theta: \|\theta - a\|_2 < \epsilon\}$ in the setting of \eqref{cf}.

Since both \eqref{eq:classcial_Fano} and \eqref{cf} are special instances of \eqref{intro_gf}, one might reasonably conjecture that inequality \eqref{intro_gf} might hold more generally. In Section \ref{sec:Bayes_01}, we give an affirmative answer by proving that inequality \eqref{intro_gf} holds for any zero-one valued loss function $L$ and any prior $w$. No assumptions on $\Theta$, $\ac$ and $w$ are needed. We refer to this result as \emph{generalized Fano's inequality}.  Our proof of \eqref{intro_gf} is quite succinct and is based on the data processing inequality \citep{Cover:IT:06,Liese12} for Kullback-Leibler (KL) divergence. The use of the data processing inequality for proving Fano-type inequalities was introduced by \cite{Gushchin:03}.


The data processing inequality is not only available for the KL divergence. It can be generalized to any divergence belonging to a general family known as $f$-divergences \citep{Csiszar:63, AliSilvey:66}. This family includes the KL divergence, chi-squared divergence, squared Hellinger distance, total variation distance and power divergences as special cases. The usefulness of $f$-divergences in machine learning has been illustrated in \citet{reid2011information, garcia2012divergences, reid2009generalised}.

For every $f$-divergence, one can define a quantity called $f$-informativity \citep{Csiszar:72} which plays the same role as the mutual information for KL divergence. The precise definitions of $f$-divergences and $f$-informativities are given in Section \ref{sec:pre}. Utilizing the data processing inequality for $f$-divergence, we prove  general Bayes  risk lower bounds which hold for every zero-one valued loss $L$ and for arbitrary $\Theta$, $\ac$ and $w$ (Theorem \ref{man}). The generalized Fano's inequality~\eqref{intro_gf} is a special case by choosing the $f$-divergence to be KL. The proposed Bayes risk lower bounds  can also be specialized to other $f$-divergences and have a variety of interesting connections to existing lower bounds in the literature such as Le Cam's inequality,  Assouad's lemma (see Theorem 2.12 in \cite{Tsybakov:nonpara}), Birg\'{e}-Gushchin inequality \citep{Gushchin:03, Birge:ineq}. These results are provided in Section \ref{sec:Bayes_01}.  

In Section \ref{sec:Bayes_general}, we deal with nonnegative valued loss functions $L$ which are not necessarily zero-one valued. Basically, we use the standard method of lower bounding the general loss function $L$ by a zero-one valued function and then use our results from Section \ref{sec:Bayes_01} for lower bounding the Bayes risk.
This technique, in conjunction with the generalized Fano's inequality, gives the following lower bound (proved in Corollary \ref{cor:bwl})
\begin{equation}\label{eq:intro_I_bayes_KL}
R_{\rm Bayes}(w, L; \Theta)  \geq \frac{1}{2} \sup \left\{t> 0 : \sup_{a \in \ac} w \{\theta : L(\theta, a) <t \} \leq \frac{1}{4} e^{-2 I(w, \mathcal{P})} \right\}.
\end{equation}
A special case of the above inequality has appeared previously in \citet[Theorem 6.1]{zhang2006information} (please refer to Remark \ref{zhc} for a detailed explanation of the connection between inequality \eqref{eq:intro_I_bayes_KL} and \cite[Theorem 6.1]{zhang2006information}).

We also prove analogues of the above inequality for different $f$ divergences. Specifically, using our $f$-divergence inequalities from Section \ref{sec:Bayes_01}, we prove, in Theorem \ref{bwl}, the following inequality which holds for every $f$ divergence:
\begin{equation}\label{ggfe}
R_{\rm Bayes}(w, L; \Theta) \geq \frac{1}{2} \sup \left\{t > 0 : \sup_{a \in \ac} w \{\theta : L(\theta, a) < t\} < 1 - u_f(I_f(w, \mathcal{P}))\right\}
\end{equation}
where $I_f(w, \mathcal{P})$ represents the $f$-informativity and $u_f(\cdot)$ is a non-decreasing $[0, 1]$-valued function that depends only on $f$. This function $u_f(\cdot)$ (see its definition from \eqref{ufi}) can be explicitly computed for many $f$-divergences of interest, which gives useful lower bounds in terms of $f$-informativity. For example, for the case of KL divergence and chi-squared divergence, inequality \eqref{ggfe} gives the lower bound in \eqref{eq:intro_I_bayes_KL} and the following inequality respectively,
\begin{equation}\label{eq:intro_I_bayes_chi}
R_{\rm Bayes}(w, L; \Theta)  \geq \frac{1}{2} \sup \left\{t> 0 : \sup_{a \in \ac} w \{\theta : L(\theta, a) < t\} \leq \frac{1}{4(1 + I_{\chi^2}(w, \mathcal{P}))} \right\}.
\end{equation}
where $I_{\chi^2}(w, \mathcal{P})$ is the chi-squared informativity.

Intuitively, inequality \eqref{ggfe} shows that the Bayes risk is lower bounded by half of the largest possible $t$ such that the maximum prior mass of any $t$-radius ``ball" ($w \{\theta : L(\theta, a) < t \}$) is less than some function of $f$-informativity. To apply \eqref{ggfe},  one needs to obtain upper bounds on the following two quantities:
 \begin{enumerate}
\item The ``small ball probability" $\sup_{a \in \ac} w \{\theta : L(\theta, a) < t \}$, which does not depend  of the family of probability measures $\Ps$.
\item The $f$-informativity $I_f(w, \mathcal{P})$, which does not depend on the loss function $L$.
\end{enumerate}
We note that a nice feature of \eqref{ggfe} is that $L$ and $\Ps$ play separately roles. One may first obtain an upper bound $\Iu$ for the $f$-informativity $I_f(w, \mathcal{P})$, then choose $t$ so that the small ball probability $w \{\theta : L(\theta, a) < t \}$ can be bounded from above by $1 - u_f(\Iu)$. The Bayes risk will be bounded from below by $t/2$.  It is noteworthy that the terminology ``small ball probability" was used by \cite{XuRaginsky:14} (this paper proved information-theoretic lower bounds on the minimum time in a distributed function computation problem).

We do not have a general guideline for bounding the small ball probability. It needs to be dealt with case by case based on the prior and the loss function.  But for upper bounding the $f$-informativity, we offer a general recipe in Section \ref{sec:upper_f_informativity} for a  subclass of divergences of interest (power divergences for $\alpha \notin [0, 1)$), which covers the chi-squared divergence as one of the most important divergences in our applications.  These bounds generalize results of \cite{haussler1997} and \cite{Yang:Barron:99} for mutual information to $f$-informativities involving power divergences. As an illustration of our techniques (inequality \eqref{ggfe} combined with the $f$-informativity upper bounds), we apply them to a concrete estimation problem in Section \ref{sec:upper_f_informativity}. We further apply our results to several popular machine learning and statistics problems (e.g., generalized linear model, spiked covariance model, and Gaussian model with general loss) in Appendix \ref{sec:Bayes_Example}.

In Section~\ref{sec:smoothed-analysis} and Section~\ref{sec:sparse_linear}, we present non-trivial applications of our Bayes risk lower bounds to two learning problems: the first one is a unsupervised learning problem, while the second one is a supervised learning problem. Section~\ref{sec:smoothed-analysis} studies smoothed analysis for learning mixtures of spherical Gaussians with uniform weights. Although learning mixtures of Gaussians is a computationally hard problem, it has been shown recently by~\citet{hsu2013learning}
that under the assumptions that the Gaussian means are linearly independent, it can be learnt in polynomial time by a spectral method. We perform a smoothed analysis on a variant of the algorithm~\citep{hsu2013learning}, showing that the linear independence assumption can be replaced by perturbing the true parameters by a small random noise. The method described in Section~\ref{sec:smoothed-analysis} achieves a better convergence rate than the original algorithm of~\citet{hsu2013learning}. Furthermore, we apply the Bayes risk lower bound techniques to show that the algorithm's convergence rate is unimprovable, even under smoothed analysis (i.e.~when the true parameters are randomly perturbed). Section~\ref{sec:smoothed-analysis} highlights the usefulness of our techniques in proving lower bounds for smoothed analysis, which appears to be challenging using traditional techniques of the minimax theory.

In Section \ref{sec:sparse_linear}, we consider the high-dimensional sparse linear regression problem and we provide Bayes risk lower bounds for both prediction error and estimation error under a natural prior on the regression parameter belonging to the set of $k$-sparse vectors. Although lower bounds for sparse linear regression have been well-studied (see, e.g., \cite{raskutti2011minimax, zhang2014lower} and references therein), these bounds only focus on the minimax or the worst-case scenario and thus are too pessimistic in practice. Indeed, the parameters that usually attain these minimax lower bounds have zero probability under any continuous prior, so that their average effects might be negligible. The fundamental limits of sparse linear regression under a realistic prior is, to the best of the our knowledge, unknown. The developed tool of lower bounding Bayes risks can be directly applied to characterize these limits. Moreover, our Bayes risk lower bound is flexible in the sense that by tuning the variance  of the prior of non-zero elements of $\theta$, it provides a wide spectrum of lower bounds. For one particular choice of the variance, our Bayes risk lower bounds match the minimax risk lower bounds. This gives a natural \emph{least favorable prior} for sparse linear regression, while the known least favorable prior in \cite{raskutti2011minimax} is a non-constructive
discrete prior over a packing set of the parameter space that cannot be sampled from. We also work under the \emph{improper learning} setting where we allow non-sparse estimators for the true regression vector (even though the true regression vector is assumed to be sparse).



\subsection{Related Works}

Before finishing this introduction section, we briefly describe related work on Bayes risk lower bounds. There are a few results dealing with special cases of finite dimensional estimation problems under (weighted/truncated) quadratic losses. The first results of this kind were established by \cite{VanTree:68}, and ~\cite{Borovkov:80} with extensions by~\cite{Brown:90, Brown:93, gill95,Sato:96, Takada:99}. A few additional papers dealt with even more specialized problems e.g., Gaussian white noise model \citep{BrownLiu:93}, scale models \citep{Gajek:94} and estimating Gaussian variance \citep{Vidakovic:95}. Most of these results are based on the van Trees inequality (see \cite{gill95} and Theorem 2.13 in \cite{Tsybakov:nonpara}). Although the van Trees inequality usually leads to sharp constant in the Bayes risk lower bounds, it only applies to weighted quadratic loss functions (as its proof relies on Cauchy-Schwarz inequality) and requires the underlying Fisher information to be easily computable, which limits its applicability. There is also a vast body of literature on minimax lower bounds (see, e.g., \cite{Tsybakov:nonpara}) which can be viewed as Bayes risk lower bounds for certain priors. These priors are usually discrete and specially constructed so that the lower bounds do not apply to more general (continuous) priors. Another related area of work involves finding lower bounds on posterior contraction rates (see, e.g., \citet{Castillo08Lower}).



\subsection{Outline of the Paper}
\label{sec:conclusion}
The rest of the paper is organized in the following way. In Section \ref{sec:pre}, we describe notations and review preliminaries such as $f$-divergences, $f$-informativity, data processing inequality, etc. Section \ref{sec:Bayes_01} deals with inequalities for zero-one valued loss functions. These inequalities have many connections to existing lower bound techniques. Section \ref{sec:Bayes_general} deals with nonnegative loss functions and we provide inequality \eqref{ggfe} and its special cases. Section \ref{sec:upper_f_informativity} presents upper bounds on the $f$-informativity for power divergences for $\alpha \notin [0, 1)$. Some examples are also given in this section.  Section \ref{sec:smoothed-analysis} studies smoothed analysis for learning mixtures of spherical Gaussians with uniform weights using our technique.
We conclude the paper in Section \ref{sec:conclusion}.   Due to space constraints, we have relegated some proofs and additional examples and results to the appendix.


\section{Preliminaries and Notations}
\label{sec:pre}
We first review the notions of $f$-divergence~\citep{Csiszar:63,AliSilvey:66} and $f$-informativity~\citep{Csiszar:72}. Let $\C$ denote the class of all convex functions $f: (0, \infty) \rightarrow \R$ which satisfy $f(1) = 0$. Because of convexity, the limits $f(0) := \lim_{x \downarrow 0} f(x)$ and $f'(\infty) := \lim_{x \uparrow \infty} f(x)/x$  exist (even though they may be $+\infty$) for each $f \in \C$. Each function $f \in \C$  defines a divergence between probability measures which is referred to as $f$-divergence. For two probability measures $P$ and $Q$ on a sample space having densities  $p$ and $q$ with respect to a common measure $\mu$, the $f$-divergence $D_f(P||Q)$ between $P$ and $Q$ is defined as follows:
\begin{equation}\label{eq:df}
  D_f(P||Q) := \int f \left(\frac{p}{q} \right)q  \mathrm{d} \mu  + f'(\infty) P\{q = 0\}.
\end{equation}
We note that the convention $0\cdot \infty= 0$ is adopted here so that $f'(\infty) P\{q = 0\}=0$ when $f'(\infty)=\infty$ and $P\{q=0\}=0$. Note that $D_f(P\|Q) = +\infty$ when $f'(\infty) = +\infty$ and $P\{q = 0\} > 0$.  Also note that $f(1) =  0$ implies that $D_f(P \|Q) = 0$ when $P = Q$.

Certain divergences are commonly used because they can  be easily computed or bounded when $P$ and $Q$ are product measures. These divergences are the power divergences  corresponding to the functions $f_{\alpha}$ defined by
\begin{eqnarray*}
  f_\alpha(x) =\begin{cases}
    x^{\alpha}-1 & \text{for}  \quad \alpha \not \in [0,1];\\
    1- x^{\alpha} & \text{for}  \quad \alpha \in (0,1);\\
    x\log x & \text{for}  \quad \alpha=1; \\
    -\log x & \text{for}  \quad \alpha=0.\\
  \end{cases}
\end{eqnarray*}
Popular examples of power divergences include:

\textbf{1)} Kullback-Leibler (KL) divergence: $\alpha=1$, $D_{f_1}(P||Q)=\bigintssss p \log (p/q) \mathrm{d} \mu$ if $P$ is absolutely continuous with respect to $Q$ (and it is infinite if $P$ is not absolutely continuous with respect to $Q$).  Following the conventional notation, we denote the KL divergence by $D(P||Q)$ (instead of $D_{f_1}(P||Q)$).

\textbf{2)} Chi-squared divergence: $\alpha=2$, $D_{f_2}(P||Q)= \bigintssss (p^2/q)  \mathrm{d} \mu - 1$ if $P$ is absolutely continuous with respect to $Q$ (and it is infinite if $P$ is not absolutely continuous with respect to $Q$). We denote the chi-squared divergence by $\chi^2(P||Q)$ following the conventional notation.

\textbf{3)} When $\alpha=1/2$, one has $D_{f_{1/2}}(P||Q)=  1- \bigintssss \sqrt{p q}  \mathrm{d} \mu$ which is a half of the squared Hellinger distance. That is, $D_{f_{1/2}}(P||Q)=H^2(P ||Q)/2$, where $H^2(P||Q)= \bigintssss  (\sqrt{p}-\sqrt{q})^2  \mathrm{d} \mu$ is the squared Hellinger distance between $P$ and $Q$.

The total variation distance $\|P-Q\|_{TV}$ is another $f$-divergence (with $f(x) = |x - 1|/2$) but not a power divergence.

One of the most important properties of $f$-divergences is the ``data processing inequality" (\cite{Csiszar:72} and \citet[Theorem 3.1]{Liese12}) which states the   following: let $\samp$ and $\mathcal{Y}$ be two measurable spaces and let $\Gamma: \samp \rightarrow \mathcal{Y}$ be a measurable function. For every $f \in \C$ and every pair of probability measures $P$ and $Q$ on $\samp$, we have
\begin{equation}
  \label{dpi}
D_f(P\Gamma^{-1} || Q \Gamma^{-1}) \leq D_f(P||Q),
\end{equation}
where $P\Gamma^{-1}$ and $Q \Gamma^{-1}$ denote the \emph{induced measures} of $\Gamma$ on $\mathcal{Y}$, i.e., for any measurable set $B$ on the space $\mathcal{Y}$, $P\Gamma^{-1}(B):=P(\Gamma^{-1}(B))$, $Q\Gamma^{-1}(B):=Q(\Gamma^{-1}(B))$ (see the definition of induced measure from Definition 2.2.1. in \cite{Athreya:06}).

Next, we introduce the notion of $f$-informativity~\citep{Csiszar:72}. Let $\mathcal{P}= \{P_{\theta}: \theta \in \Theta\}$ be a family of probability measures on a space $\mathcal{X}$ and $w$ be a probability measure on $\Theta$. For each $f \in \C$, the $f$-informativity, $I_f(w, \Ps)$, is defined as
\begin{eqnarray}\label{eq:def_f_info}
  I_f(w, \Ps)= \inf_{Q} \int D_f(P_{\theta}||Q) w(\mathrm{d} \theta),
\end{eqnarray}
where the infimum is taken over all possible probability measures $Q$ on $\mathcal{X}$. When $f(x)=x\log x$ (so that the corresponding $f$-divergence is the KL divergence), the $f$-informativity is equal to the mutual information and is denoted by $I(w, \mathcal{P})$. We denote the informativity corresponding to the power divergence $D_{f_\alpha}$ by $I_{f_{\alpha}}(w, \Ps)$. For the special case $\alpha = 2$, we use the more suggestive notation $I_{\chi^2}(w, \Ps)$. The informativity corresponding to the total variation distance will be denoted by $I_{TV}(w, \Ps)$.

Additional notations and definitions are described as follows.
Recall the Bayes risk~\eqref{eq:R_Bayes} and the minimax risk~\eqref{eq:R_minimax}. When the loss function $L$ and parameter space $\Theta$ are clear from the context, we drop the dependence on $L$ and $\Theta$. When the prior $w$ is also clear from the context, we denote the Bayes risk by $R$ and the minimax risk by $R_{\rm minimax}$.
We need certain notation for covering numbers. For a given $f$-divergence  and a subset $S \subset \Theta$, let $M_f(\epsilon, S)$ denote any upper bound on the smallest number $M$ for which there exist probability measures $Q_1, \dots, Q_M$ that form an $\epsilon^2$-cover of $\{P_{\theta}, \theta \in S \}$ under the $f$-divergence i.e.,
\begin{equation}\label{redf}
  \sup_{\theta \in S} \min_{1 \leq j \leq M} D_f(P_{\theta} || Q_j) \leq \epsilon^2.
\end{equation}
We write the covering number as $M_{KL}(\epsilon, S)$ when $f(x) = x \log x$ and $M_{\chi^2}(\epsilon, S)$ when $f(x) = x^2 - 1$. We write $M_{\alpha}(\epsilon, S)$ when $f = f_{\alpha}$ for other $\alpha \in \R$. We note that $\log M_f(\epsilon, S)$ is an upper bound on the metric entropy. The quantity $M_f(\epsilon, S)$ can be infinite if $S$ is arbitrary.
For a vector $x=(x_1, \ldots, x_d)$ and a real number $p \geq 1$, denote by $\|x\|_p$ the $\ell_p$-norm of $x$. In particular, $\|x\|_2$ denotes the Euclidean norm of $x$.
$\mathbb{I}(A)$ denotes the indicator function which takes value 1 when $A$ is true and 0 otherwise.
We use $C$, $c$, etc. to denote generic constants whose values might change from place to place.

\section{Bayes Risk Lower Bounds for Zero-one Valued Loss Functions and Their Applications}
\label{sec:Bayes_01}

In this section, we consider zero-one loss functions $L$ and present a principled approach to derive Bayes risk lower bounds involving $f$-informativity for every $f \in \C$. Our results hold for any given prior $w$ and zero-one loss $L$. By specializing the $f$-divergence to KL   divergence, we obtain the generalized Fano's inequality~\eqref{intro_gf}. When specializing to other $f$-divergences, our bounds lead to some classical minimax bounds of Le Cam and Assouad \citep{Assouad:83}, more recent minimax results of \citet{Gushchin:03, Birge:ineq} and also results in \citet[Chapter 2]{Tsybakov:nonpara}. Bayes risk lower bounds for general nonnegative loss functions will be presented in the next section.

We need additional notations to state the main results of this section. For  each $f \in \C$, let $\phi_f: [0, 1]^2 \rightarrow \R$ be the function defined in the following way: for $a, b \in [0, 1]^2$, $\phi_f(a, b)$ is the $f$-divergence between the two probability measures $P$ and $Q$ on $\{0, 1\}$ given by $P\{1\} = a$ and $Q\{1\} = b$. By the definition \eqref{eq:df}, it is easy to see that $\phi_f(a, b)$ has the following  expression (recall that $f'(\infty) := \lim_{x \uparrow \infty} f(x)/x$):
\begin{eqnarray}\label{eq:phi}
  \phi_f(a, b) =\begin{cases}
    b f \left(\frac{a}{b} \right) + (1 - b) f \left(\frac{1-a}{1-b}
    \right)    & \text{for}  \quad 0 < b < 1;\\
  f(1-a) + a f'(\infty) & \text{for}  \quad b = 0;\\
  f(a) + (1 - a) f'(\infty) & \text{for}  \quad b = 1.  \\
  \end{cases}
\end{eqnarray}
The convexity of $f$ implies monotonicity and convexity properties of $\phi_f$, which is stated in the following lemma. 
\begin{lemma}\label{lem:lower_property}
For each $f \in \mathcal{C}$, for every fixed $b$, the map $g(a): a \mapsto \phi_f(a,b)$ is non-increasing for $a \in [0,b]$ and $g(a)$ is convex and continuous in $a$. Further, for every fixed $a$, the map $h(b): b \mapsto \phi_f(a,b)$ is non-decreasing for $b \in  [a, 1]$.
\end{lemma}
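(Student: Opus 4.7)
My plan is to first establish joint convexity of $\phi_f$ on $[0,1]^2$, from which separate convexity in each argument follows, then combine convexity with the identity $\phi_f(c,c)=0$ to obtain both monotonicity statements, and finally deduce continuity from standard properties of convex functions together with the explicit formula~\eqref{eq:phi}. The joint convexity step rests on the observation that mixtures of Bernoullis are Bernoulli: $\lambda\,\mathrm{Ber}(a_1) + (1-\lambda)\,\mathrm{Ber}(a_2) = \mathrm{Ber}(\lambda a_1 + (1-\lambda) a_2)$ for $a_1, a_2 \in [0,1]$ and $\lambda \in [0,1]$, and analogously in the second argument. Combined with the classical joint convexity of $D_f(\cdot\|\cdot)$---itself a consequence of the fact that the perspective map $(x,y)\mapsto y f(x/y)$ is jointly convex on $(0,\infty)^2$ whenever $f$ is convex---this immediately yields joint convexity of $\phi_f$ on $[0,1]^2$, hence convexity of $g$ in $a$ and of $h$ in $b$ separately.

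For the monotonicity, I would exploit that $\phi_f(c,c) = D_f(\mathrm{Ber}(c)\|\mathrm{Ber}(c)) = 0$ (since $f(1)=0$) while $\phi_f \geq 0$ everywhere. Fixing $b$ and taking $0 \leq a_1 < a_2 \leq b$, I write $a_2 = \mu a_1 + (1-\mu) b$ with $\mu = (b-a_2)/(b-a_1) \in [0,1)$; convexity of $g$ then yields
\[
g(a_2) \;\leq\; \mu\,g(a_1) + (1-\mu)\,g(b) \;=\; \mu\,g(a_1) \;\leq\; g(a_1),
\]
establishing that $g$ is non-increasing on $[0,b]$. A symmetric argument---writing $b_1 = \mu' a + (1-\mu') b_2$ for $a \leq b_1 < b_2 \leq 1$ with $\mu' = (b_2-b_1)/(b_2-a)$, and using $h(a)=0$ together with $h(b_2) \geq 0$---proves that $h$ is non-decreasing on $[a,1]$.

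Finally, for continuity: $g$ is convex on $[0,1]$ and hence automatically continuous on $(0,1)$. Continuity at $a \in \{0,1\}$ I would read off from the piecewise formula~\eqref{eq:phi}, using that $f$ is convex and hence continuous on $(0,\infty)$, and that by definition $f(0) = \lim_{x\downarrow 0} f(x)$ and $f'(\infty) = \lim_{x\uparrow\infty} f(x)/x$; thus for $b \in (0,1)$ one has $\lim_{a\downarrow 0} b f(a/b) = b f(0)$ and the analogous identity at $a = 1$. The main (and quite mild) obstacle is the extremal case $b \in \{0,1\}$ combined with $f'(\infty) = +\infty$, where continuity at the opposite endpoint must be interpreted in the extended-real sense on $[0,\infty]$; this does not affect any downstream use of the lemma.
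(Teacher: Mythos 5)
Your proof is correct, and it takes a genuinely different route from the paper's. The paper fixes one argument and works directly with one-sided derivatives: for $g(a)=\phi_f(a,b)$ it computes $g_L'(a)=f_L'(a/b)-f_R'((1-a)/(1-b))$ (and $g_L'(a)=f_L'(a)-f'(\infty)$ when $b=1$) and uses the fact that one-sided derivatives of the convex function $f$ are non-decreasing, together with $a/b\le(1-a)/(1-b)$, to conclude $g_L'\le 0$; for $h(b)=\phi_f(a,b)$ it similarly bounds $h_R'(b)$ from below via a supporting-line inequality. Your argument instead lifts the problem to joint convexity: you invoke the classical fact that $(P,Q)\mapsto D_f(P\|Q)$ is jointly convex (the perspective-function argument), observe that convex combinations of Bernoullis are Bernoulli, and conclude that $\phi_f$ is jointly convex on $[0,1]^2$ — a strictly stronger structural fact than what the lemma asserts. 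You then get both monotonicity claims as one-line consequences of convexity of the sections together with the normalization $\phi_f(c,c)=0$ (which uses $f(1)=0$) and non-negativity of $\phi_f$. This is cleaner and avoids the derivative bookkeeping entirely; what it costs is reliance on the joint convexity of $f$-divergences, which, while standard, is a less elementary ingredient than the monotonicity of one-sided derivatives that the paper uses. Both approaches handle continuity the same way in substance: convexity gives continuity on the open interval, and at the endpoints one reads off continuity from the explicit piecewise formula and the definitions $f(0):=\lim_{x\downarrow 0}f(x)$, $f'(\infty):=\lim_{x\uparrow\infty}f(x)/x$. Your observation about the extremal case ($b\in\{0,1\}$ with $f'(\infty)=+\infty$) is accurate — in fact for $b=1$ the map $a\mapsto\phi_f(a,1)$ is identically $+\infty$ on $[0,1)$ and jumps to $0$ at $a=1$, so continuity fails even in the extended-real sense there; the paper is silently restricting to the situations where the lemma is applied (continuity of $\phi_f(\cdot,R_0)$ on $[0,R_0)$ in the limit $\epsilon\downarrow 0$), where this case does not arise.
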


We also define the quantity
\begin{equation}\label{R0def}
  R_0 := \inf_{a \in \ac} \int_{\Theta} L(\theta, a) w(\mathrm{d}\theta),
\end{equation}
where the decision $a$ does not depend on data $X$. Note that $R_0$ represents the Bayes risk with respect to $w$ in the ``no data'' problem i.e., when one only has information on $\Theta$, $\ac$, $L$ and the prior $w$ but not the data $X$. For simplicity, our notation for $R_0$ suppresses its dependence  on $w$. Because the loss function is zero-one valued so that $L(\theta,a)=1-\mathbb{I}(L(\theta,a)=0)$, the quantity $R_0$ has the following alternative expression:
\begin{equation}
  \label{alte}
  R_0 = 1 - \sup_{a \in \ac} w(B(a)),
\end{equation}
where
\begin{equation}\label{ba}
B(a) := \left\{\theta \in \Theta: L(\theta, a) = 0 \right\},
\end{equation}
and $w(B(a))$ is the prior mass of the ``ball" $B(a)$.
It will be important in the sequel to observe that the Bayes risk, $R_{\rm Bayes}(w)$ is bounded from above by $R_0$. This is obvious because the risk with some data cannot be greater than the risk in the no data problem (which can be viewed as an application of the data processing inequality). Formally, if $\mathcal{D}=\{\dr:  \exists a \in \ac ~~ \text{such that} ~~ \dr(x) = a~ \forall x \in \mathcal{X}\}$ is the class of the constant decision rules, then $R_0=\inf_{\dr \in \mathcal{D}} \int_{\Theta} \E_{\theta}L(\theta, \dr(X)) w(\mathrm{d}\theta) \geq R_{\rm Bayes}(w)$. Because $0 \leq R_{\rm Bayes}(w) \leq R_0$, we have $R_{\rm Bayes}(w) = 0$ when $R_0 = 0$. We shall therefore assume throughout this section that $R_0 > 0$. 

The main result of this section is presented next. It provides an implicit lower bound for the Bayes risk in terms of $R_0$ and the $f$-informativity $I_f(w, \Ps)$ for every $f \in \C$. The only assumption is that $L$ is zero-one valued and we do not assume the existence of the Bayes decision rule.
\begin{theorem}\label{man}
  Suppose that the loss function $L$ is zero-one valued. For any $f \in \C$, we have
  \begin{equation}
    \label{man.eq}
    I_f(w, \Ps) \geq \phi_f(R_{\rm Bayes}(w), R_0)
  \end{equation}
   where $\phi_f$ and $R_0$ are defined \eqref{eq:phi} and \eqref{R0def} respectively.
\end{theorem}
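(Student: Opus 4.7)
The plan is to apply the data processing inequality~\eqref{dpi} to a carefully chosen binary statistic on the product space $\Theta \times \samp$. First I would fix an arbitrary probability measure $Q$ on $\samp$ and introduce two joint laws on $\Theta \times \samp$: the ``Bayesian'' law $\mu$ under which $\theta \sim w$ and $X \mid \theta \sim P_\theta$, and the product law $\nu$ under which $\theta \sim w$ and $X \sim Q$ independently. A direct computation from~\eqref{eq:df} gives $D_f(\mu \| \nu) = \int D_f(P_\theta \| Q) w(\mathrm{d}\theta)$, so taking an infimum over $Q$ at the end will recover the $f$-informativity $I_f(w, \Ps)$.

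Next, for any decision rule $\dr$, I would apply data processing to the measurable map $T(\theta, x) := L(\theta, \dr(x)) \in \{0,1\}$. Under $\mu$, $T$ is Bernoulli with parameter $\bar r_\dr := \int \E_\theta L(\theta, \dr(X)) w(\mathrm{d}\theta) \geq R_{\rm Bayes}(w)$. Under $\nu$, Fubini's theorem combined with the identity $B(\dr(x)) = \{\theta : L(\theta, \dr(x)) = 0\}$ yields
\begin{equation*}
\nu(T = 1) \;=\; \int_\samp \bigl(1 - w(B(\dr(x)))\bigr)\, Q(\mathrm{d}x) \;=:\; b_\dr,
\end{equation*}
and since $w(B(a)) \leq 1 - R_0$ for every $a$ by~\eqref{alte}, we get $b_\dr \geq R_0$. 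The data processing inequality then delivers
\begin{equation*}
\phi_f(\bar r_\dr,\, b_\dr) \;\leq\; D_f(\mu \| \nu) \;=\; \int D_f(P_\theta \| Q) w(\mathrm{d}\theta).
\end{equation*}

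The remaining task is to convert this into the claimed bound $\phi_f(R_{\rm Bayes}(w), R_0) \leq I_f(w, \Ps)$. Taking a minimizing sequence $\dr_n$ with $\bar r_{\dr_n} \to R_{\rm Bayes}(w)$, and noting that $R_{\rm Bayes}(w) \leq R_0$ (the Bayes risk is dominated by the no-data risk), we may assume $\bar r_{\dr_n} \leq R_0 \leq b_{\dr_n}$ for all large $n$. Monotonicity of $\phi_f$ in its second argument on $[\bar r_{\dr_n}, 1]$ then yields $\phi_f(\bar r_{\dr_n}, R_0) \leq \phi_f(\bar r_{\dr_n}, b_{\dr_n})$, and continuity of $a \mapsto \phi_f(a, R_0)$ (both facts from Lemma~\ref{lem:lower_property}) allows passage to the limit $n \to \infty$. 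Finally, taking an infimum over $Q$ gives the theorem; the degenerate case $R_{\rm Bayes}(w) = R_0$ is trivial since $\phi_f(b,b) = 0$.

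The conceptual obstacle, and really the only nontrivial step, is the initial choice of statistic: processing the indicator $L(\theta, \dr(X))$ on the product space $\Theta \times \samp$ against the product measure $w \otimes Q$ (rather than against the marginal of $\mu$ on $\samp$) is exactly what forces the constant $R_0$ to appear in the second slot of $\phi_f$. Once this observation is in hand, the remaining pieces---the Fubini-based lower bound $b_\dr \geq R_0$, together with the monotonicity and continuity of $\phi_f$---are straightforward applications of definitions and Lemma~\ref{lem:lower_property}.
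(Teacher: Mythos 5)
Your proposal is correct and follows essentially the same route as the paper: you apply the data processing inequality to the binary statistic $L(\theta,\dr(X))$, comparing the Bayesian joint law to the product law $w\otimes Q$ (this is precisely the content of Lemma~\ref{lem.dpi}), lower-bound the reference success probability by $R_0$, and then pass from a near-optimal $\dr$ to $R_{\rm Bayes}(w)$ using the monotonicity and continuity properties from Lemma~\ref{lem:lower_property}. The only cosmetic difference is that you work with a minimizing sequence $\dr_n$ directly, whereas the paper uses an explicit $\epsilon$-approximation and lets $\epsilon\downarrow 0$.
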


\begin{figure}[!t]
\centering
  \includegraphics[width=0.55\textwidth]{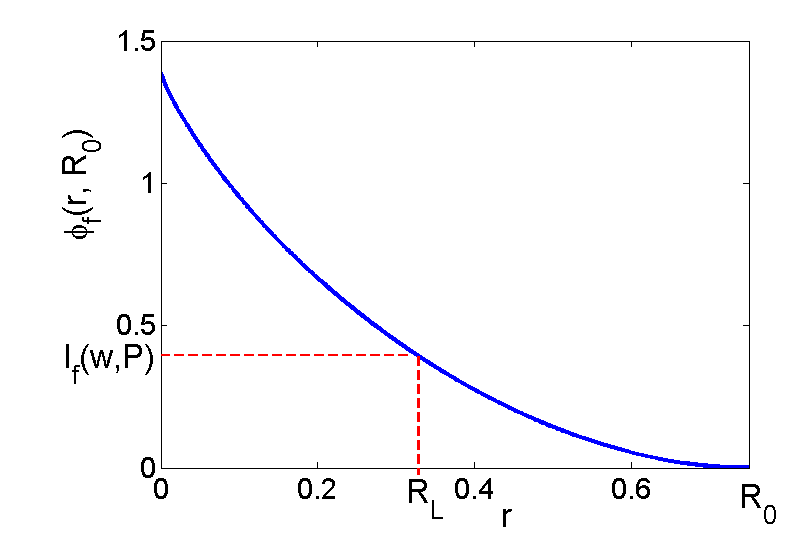}
\caption{Illustration on why \eqref{man.eq} leads to a lower bound on $R_{\rm Bayes}(w)$. Recall that $R \leq R_0$ and $r \mapsto \phi_f(r, R_0)$ is non-increasing in $r$ for $r \in [0, R_0]$.  Given $I_f(w, \mathcal{P})$ as an upper bound of $\phi_f(R_{\rm Bayes}(w), R_0)$, we  have $R_{\rm Bayes}(w) \geq R_L = g^{-1}(I_f(w, \Ps))$ and thus $R_L$ serves as a Bayes risk lower bound.
}
  \label{fig:phi_r}
  \vspace{-5mm}
\end{figure}

Before we prove Theorem \ref{man}, we first show that the inequality \eqref{man.eq} indeed provides an implicit lower bound for the Bayes risk $R := R_{\rm Bayes}(w)$ since  $R \leq R_0$  and $r \mapsto \phi_f(r, R_0)$ is non-increasing in $r$ for $r \in [0, R_0]$ (Lemma \ref{lem:lower_property}). Therefore, let $g(r) \defeq \phi_f(r, R_0)$. We have
\begin{equation}\label{eq:lb_phi_inverse}
  R_{\rm Bayes}(w) \geq g^{-1} (I_f(w, \Ps)),
\end{equation}
where $g^{-1}(x):=\inf\{0 \leq r \leq R_0, g(r) \leq x\}$ is the generalized inverse function of the non-increasing $g(r)$.
 As an illustration, we plot $\phi_f(r, R_0)$ for $f(x)=x \log x$  and the corresponding Bayes risk lower bound $ g^{-1} (I_f(w, \Ps))$ in Figure~\ref{fig:phi_r}. The lower bound \eqref{eq:lb_phi_inverse} can be immediately applied to obtain Bayes risk lower bounds when the $f$-divergence in \eqref{man.eq} is chi-squared divergence, total variation distance, or Hellinger distance (see Corollary \ref{cor:lb_01_f}). However, for the KL divergence, there is no simple form of $g^{-1}(x)$. To obtain the corresponding Bayes risk lower bound, we can invert \eqref{man.eq} by utilizing the convexity of $g(r)$, which will give a generalized Fano's inequality (see Corollary \ref{cor:Fano}). In particular, since $r \mapsto \phi_f(r, R_0)$ is convex (see Lemma \ref{lem:lower_property}),
\begin{equation*}
  \phi_f(R, R_0) \geq \phi_f(r, R_0) + \phi'_f(r-, R_0) (R - r) \qt{for every $0 < r \leq R_0$}
\end{equation*}
where $\phi_f'(r-, R_0)$ denotes the left derivative of $x \mapsto \phi_f(x, R_0)$ at $x = r$. The monotonicity of $\phi_f(r, R_0)$ in $r$ (Lemma \ref{lem:lower_property}) gives $\phi_f'(r-, R_0) \leq 0$ and we thus have,
\begin{equation*}
  R \geq r + \frac{\phi_f(R, R_0) - \phi_f(r, R_0)}{\phi_f'(r-, R_0)} \qt{for every $0 < r \leq R_0$}.
\end{equation*}
Inequality \eqref{man.eq}  $I_f(w, \Ps) \geq \phi_f(R, R_0)$ can now be used  to deduce that (note that  $\phi_f'(r-, R_0) \leq 0$)
\begin{equation}
  \label{expb}
  R \geq r + \frac{I_f(w, \Ps) - \phi_f(r, R_0)}{\phi_f'(r-, R_0)} \qt{for every $0 < r \leq R_0$}.
\end{equation}
The inequalities \eqref{eq:lb_phi_inverse} and \eqref{expb} provide general approaches to convert \eqref{man.eq} to an  explicit lower bound on $R$.

Theorem~\ref{man} is new, but its special case $\Theta = \ac =  \{1, \dots, N\}$, $L(\theta, a) := \mathbbm{I}\{\theta \neq a\}$ and the uniform prior $w$ is known (see \cite{Gushchin:03}  and \cite{Aditya:fdiv}). In such a discrete setting, $w(B(a))=1/N$ for any $a \in \ac$ and thus $R_0=1-1/N$. The proof of Theorem \ref{man} heavily relies on the following lemma, which is a consequence of the data processing inequality for $f$-divergences (see \eqref{dpi} in Section \ref{sec:pre}).

\begin{lemma}\label{lem.dpi}
Suppose that the loss function $L$ is zero-one valued. For every $f \in \C$, every  probability measure $Q$ on $\samp$ and every decision rule $\dr$, we have
\begin{equation}\label{main.eq1}
  \int_{\Theta} D_f(P_{\theta}||Q) w(\mathrm{d}\theta) \geq \phi_f(R^{\dr}, R_Q^{\dr})
\end{equation}
where
\begin{equation}\label{eq:def_R_Q}
   R^{\dr} := \int_{\Theta} \E_{\theta} L(\theta, \dr(X)) w(\mathrm{d}\theta) ~~, ~~   R_Q^{\dr} := \int_{\samp} \int_{\Theta} L(\theta, \dr(x)) w(\mathrm{d}\theta) Q(\mathrm{d}x).
\end{equation}
\end{lemma}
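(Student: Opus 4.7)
The plan is to reduce the problem to a two-point comparison via the data processing inequality, and then glue the pointwise inequalities together using Jensen's inequality on $\phi_f$. For a fixed decision rule $\dr$ and each $\theta \in \Theta$, define the measurable map $\Gamma_\theta: \samp \to \{0,1\}$ by $\Gamma_\theta(x) := L(\theta, \dr(x))$; this is well-defined since $L$ takes only the values $0$ and $1$. Then the induced measure $P_\theta \Gamma_\theta^{-1}$ is the Bernoulli distribution on $\{0,1\}$ with $P_\theta \Gamma_\theta^{-1}\{1\} = \E_\theta L(\theta, \dr(X))$, and similarly $Q \Gamma_\theta^{-1}\{1\} = \int_{\samp} L(\theta, \dr(x)) Q(\mathrm{d}x)$.

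Applying the data processing inequality \eqref{dpi} to $\Gamma_\theta$, and recognizing the $f$-divergence between two Bernoullis as $\phi_f$ by definition \eqref{eq:phi}, I obtain
\begin{equation*}
\phi_f\Bigl(\E_\theta L(\theta, \dr(X)),\; \int_{\samp} L(\theta, \dr(x))\, Q(\mathrm{d}x)\Bigr) \;=\; D_f\bigl(P_\theta \Gamma_\theta^{-1} \,\big\|\, Q \Gamma_\theta^{-1}\bigr) \;\leq\; D_f(P_\theta\|Q).
\end{equation*}
Integrating this pointwise inequality against the prior $w$ yields
\begin{equation*}
\int_\Theta \phi_f\Bigl(\E_\theta L(\theta, \dr(X)),\; \int_{\samp} L(\theta, \dr(x))\, Q(\mathrm{d}x)\Bigr)\, w(\mathrm{d}\theta) \;\leq\; \int_\Theta D_f(P_\theta\|Q)\, w(\mathrm{d}\theta).
\end{equation*}

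To finish, I would invoke the joint convexity of $\phi_f$ in its two arguments; this is immediate because $\phi_f(a,b)$ is an $f$-divergence and $f$-divergences are jointly convex in the pair of measures (standard consequence of the convexity of $f$). By Jensen's inequality applied to the probability measure $w$, the left-hand side above is bounded below by
\begin{equation*}
\phi_f\Bigl(\int_\Theta \E_\theta L(\theta, \dr(X))\, w(\mathrm{d}\theta),\; \int_\Theta \int_{\samp} L(\theta, \dr(x))\, Q(\mathrm{d}x)\, w(\mathrm{d}\theta)\Bigr) \;=\; \phi_f(R^\dr, R_Q^\dr),
\end{equation*}
where the second integral is rewritten using Fubini (legitimate since the integrand is nonnegative and bounded by $1$). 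Chaining the two inequalities gives \eqref{main.eq1}.

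The only real technical point to verify is the joint convexity of $\phi_f$, which I would either cite from the standard $f$-divergence literature or sketch in one line by writing $\phi_f(a,b) = b f(a/b) + (1-b) f((1-a)/(1-b))$ as a perspective of $f$ plus the symmetric term; both terms are jointly convex on $(0,1)^2$, and the boundary cases $b = 0,1$ in \eqref{eq:phi} are handled by continuity using the limits $f(0)$ and $f'(\infty)$. Measurability of $\theta \mapsto \Gamma_\theta$ (required for the Fubini step and for the outer integrand to be well-defined) is routine since $(\theta, x) \mapsto L(\theta, \dr(x))$ is measurable by the assumptions on the decision-theoretic setup.
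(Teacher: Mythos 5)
Your proof is correct, but it takes a genuinely different route from the paper's. The paper applies the data processing inequality \emph{once}, to the joint distributions $\P$ (with $\theta\sim w$, $X\mid\theta\sim P_\theta$) and $\Q$ (with $\theta\sim w$, $X\sim Q$ independent) on $\Theta\times\samp$, pushed forward through the single map $\Gamma(\theta,x)=L(\theta,\dr(x))$; the identity $D_f(\P\|\Q)=\int_\Theta D_f(P_\theta\|Q)\,w(\mathrm{d}\theta)$ (valid because $\P$ and $\Q$ share the marginal $w$ on $\theta$) then finishes the argument in one stroke. You instead apply the data processing inequality \emph{fiberwise} — one application of $\Gamma_\theta:\samp\to\{0,1\}$ for each fixed $\theta$ — which yields the stronger pointwise bound $\phi_f(a(\theta),b(\theta))\le D_f(P_\theta\|Q)$, and then you aggregate over $\theta$ using the joint convexity of $\phi_f$ and Jensen's inequality. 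Both arguments rest on the same underlying convexity of $f$, but they package it differently: the paper's version is shorter and avoids invoking joint convexity of $\phi_f$ explicitly (at the price of reasoning about joint distributions and the conditional $f$-divergence identity), while yours is more modular, makes the pointwise inequality visible, and requires only the univariate DPI on $\samp$ plus a standard convexity fact. Your verification of joint convexity via the perspective-function decomposition, including the $b\in\{0,1\}$ boundary cases matching \eqref{eq:phi} under the closure of the perspective, is accurate, and the Tonelli step for swapping the order of integration in $R_Q^{\dr}$ is harmless since the integrand is $[0,1]$-valued. The proof is sound.
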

We note that Lemma \ref{lem.dpi} is of independent interest, which can be applied to establish minimax lower bound as shown in the following remark.

\begin{proof}[Proof of Lemma \ref{lem.dpi}]

Let $\P$ denote the joint distribution of $\theta$ and $X$ under the prior $w$ i.e.,
$\theta \sim w$ and $X|\theta \sim P_{\theta}$. For any decision rule $\dr$, $R^{\dr}$ in \eqref{eq:def_R_Q} can be written as $R^{\dr} = \E_{\P} L(\theta,\dr(X))$. Let $\Q$ denote the joint distribution of $\theta$ and $X$ under which they are independently distributed according to $\theta \sim w$ and $X \sim Q$ respectively. The quantity $R_Q^{\dr}$ in \eqref{eq:def_R_Q} can then be written as $R_Q^{\dr}=\E_\Q L\left(\theta,  \dr(X)\right)$.

Because the loss function is zero-one valued, the function $\Gamma
(\theta, x) :=  L(\theta, \dr(x))$ maps $\Theta \times \samp$
into $\{0, 1\}$. Our strategy is to fix $f \in \C$ and apply the data processing inequality~\eqref{dpi} to the probability measures $\P, \Q$ and the mapping $\Gamma$. This gives
\begin{equation}\label{dp}
  D_f(\P||\Q) \geq D_f(\P\Gamma^{-1}||\Q \Gamma^{-1}),
\end{equation}
where $\P\Gamma^{-1}$ and $\Q \Gamma^{-1}$ are induced measures on the space $\{0,1\}$ of $\Gamma$. In other words, since $L$ is zero-one valued, both $\P\Gamma^{-1}$ and $\Q \Gamma^{-1}$ are two-point distributions on $\{0,1\}$ with
\begin{equation*}
  \P \Gamma^{-1} \{1\} = \int \Gamma d\P = \E_{\P} L(\theta,
  \dr(X)) = R^{\dr}, \qquad \Q\Gamma^{-1}\{1\}= \int \Gamma d\Q= R_Q^{\dr}.
\end{equation*}
By the definition of the function $\phi_f(\cdot, \cdot)$, it follows that $D_f(\P\Gamma^{-1}||\Q \Gamma^{-1}) = \phi_f (R^{\dr}, R_Q^{\dr})$. It is also easy to see  $D_f(\P||\Q) = \int_{\Theta} D_f(P_{\theta}||Q) w(\mathrm{d}\theta)$. Combining this equation with inequality~\eqref{dp} establishes inequality~\eqref{main.eq1}.
\end{proof}

With Lemma \ref{lem.dpi} in place, we are ready to prove Theorem \ref{man}.

\begin{proof}[Proof of Theorem \ref{man}]

We write $R$ as a shorthand notation of $R_{\rm Bayes}(w)$. By the definition \eqref{eq:def_f_info} of $I_f(w, \Ps)$, it suffices to prove that
\begin{equation}\label{jia}
  \int D_f(P_{\theta} \| Q) w(d\theta) \geq \phi_f(R, R_0)
\end{equation}
for every probability measure $Q$.

Notice that $R \leq  R_0$. If $R = R_0$, then the right hand side of \eqref{man.eq} is zero and hence the inequality immediately holds. Assume that $R < R_0$. Let $\epsilon > 0$ be small enough so that $R + \epsilon < R_0$. Let $\dr$ denote any decision rule for which  $R \leq R^{\dr} < R+\epsilon$ and note that such a rule exists since $R=\inf_{\dr} R^{\dr}$. It is easy to see that
  \begin{equation*}
    R_Q^{\dr} = \int_{\samp} \int_{\Theta} L(\theta, \dr(x))
    w(\mathrm{d}\theta) Q(dx) \geq \int_{\samp}\left( \inf_{a \in \ac} \int_{\Theta }L(\theta, a)
    w(\mathrm{d}\theta) \right) Q(dx) = R_0.
  \end{equation*}
  We thus have $R \leq R^{\dr} < R + \epsilon < R_0 \leq R_Q^{\dr}$. By Lemma  \ref{lem.dpi}, we have
  \begin{equation*}
    \int_{\Theta} D_f(P_{\theta} \| Q) w(\mathrm{d}\theta) \geq \phi_f(R^{\dr},
    R_Q^{\dr}).
  \end{equation*}
Because $x \mapsto \phi_f(x, R_{Q}^{\dr})$ is non-increasing on $x \in
[0, R_Q^{\dr}]$, we have
\begin{equation*}
  \phi_f(R^{\dr}, R_Q^{\dr}) \geq \phi_f(R + \epsilon, R_Q^{\dr}).
\end{equation*}
Because $x \mapsto \phi_f(R + \epsilon, x)$ is non-decreasing
on $x \in [R + \epsilon, 1]$, we have
\begin{equation*}
  \phi_f(R + \epsilon, R_Q^{\dr}) \geq \phi_f(R + \epsilon, R_0).
\end{equation*}
Combining the above three inequalities, we have
\begin{equation*}
  \int_{\Theta} D_f(P_{\theta} \| Q) w(d\theta) \geq \phi_f(R^{\dr},
    R_Q^{\dr}) \geq \phi_f(R + \epsilon, R_Q^{\dr}) \geq \phi_f(R + \epsilon, R_0).
\end{equation*}
The proof of \eqref{jia} completes by letting $\epsilon \downarrow 0$ and  using the continuity of $\phi_f(\cdot, R_0)$ (continuity was noted in Lemma  \ref{lem:lower_property}). This completes the proof of Theorem \ref{man}.
\end{proof}

\begin{remark}\label{rem:minimax}
  Lemma \ref{lem.dpi} can also be used to derive minimax lower bounds in a different  way.  For example, when the minimax decision rule $\dr$ exists (e.g., for finite space  $\Theta$ and $\ac$ \citep{Ferguson76decision}), we have $R^{\dr} \leq R_{\rm minimax}$. If the probability measure $Q$ is chosen so that $R_{\rm minimax} \leq R_Q^{\dr}$, then, by Lemma~\ref{lem:lower_property}, the right hand side of~\eqref{man.eq} can be lower bounded by replacing $R^{\dr}$ with $R_{\rm minimax}$  which yields
\begin{equation}\label{eq:ext_main.eq_1}
  \int_{\Theta} D_f(P_{\theta}||Q) w(\mathrm{d}\theta) \geq \phi_f(R_{\rm minimax}, R_Q^{\dr}).
\end{equation}
Similarly, this inequality can be converted to an explicit lower bound on  minimax risk. We will show an application of this inequality in deriving Birg\'{e}-Gushchin inequality \citep{Gushchin:03, Birge:ineq}  in Section \ref{beig}.
\end{remark}

\subsection{Generalized Fano's Inequality}
In the next result, we derive the generalized Fano's ienquality  \eqref{intro_gf} using Theorem \ref{man}. The inequality proved here is in fact slightly stronger than \eqref{intro_gf}; see Remark \ref{pach} for the clarification.

\begin{corollary}[Generalized Fano's inequality]\label{cor:Fano}
For any given prior $w$ and zero-one loss $L$, we have
\begin{equation}\label{gf}
  R_{\rm Bayes}(w, L; \Theta) \geq 1 + \frac{I(w, \mathcal{P}) + \log (1 + R_0)}{\log \left(\sup_{a \in \ac} w(B(a))\right)},
\end{equation}
where $B(a)$ is defined in \eqref{ba}.
\end{corollary}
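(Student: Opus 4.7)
The plan is to deduce Corollary \ref{cor:Fano} directly from Theorem \ref{man} specialized to the KL-divergence case $f(x)=x\log x$, combined with the tangent-line conversion \eqref{expb}. The whole task boils down to choosing the right base point $r$ for the tangent.

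First I would write out $\phi_f$ explicitly for $f(x)=x\log x$. By \eqref{eq:phi}, for $R_0\in(0,1)$,
\begin{equation*}
\phi_f(R,R_0) \;=\; R\log\frac{R}{R_0} \,+\, (1-R)\log\frac{1-R}{1-R_0},
\end{equation*}
i.e.\ the KL divergence between two Bernoulli distributions. Theorem \ref{man} then gives $I(w,\Ps)\ge \phi_f(R,R_0)$, with $R=R_{\rm Bayes}(w)$ and $R_0$ as in \eqref{R0def}. Lemma \ref{lem:lower_property} tells us $r\mapsto \phi_f(r,R_0)$ is convex and non-increasing on $[0,R_0]$, so the general explicit bound \eqref{expb}
\begin{equation*}
R \;\ge\; r \,+\, \frac{I(w,\Ps)-\phi_f(r,R_0)}{\phi_f'(r-,R_0)} \qquad\text{for every } 0<r\le R_0
\end{equation*}
applies. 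A direct calculation yields $\phi_f'(r,R_0)=\log\!\bigl(r(1-R_0)/\bigl(R_0(1-r)\bigr)\bigr)$.

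Next I would choose $r$ so that the slope $\phi_f'(r,R_0)$ equals $\log(1-R_0)=\log\bigl(\sup_{a\in\ac}w(B(a))\bigr)$, which is the denominator appearing in the target inequality \eqref{gf}. Setting $\log\!\bigl(r(1-R_0)/\bigl(R_0(1-r)\bigr)\bigr)=\log(1-R_0)$ gives the clean solution
\begin{equation*}
r^\ast \;=\; \frac{R_0}{1+R_0}\,,
\end{equation*}
which lies in $(0,R_0]$ because $R_0\in(0,1]$ by the standing assumption $R_0>0$. The rest is routine algebra: plug $r^\ast$ into $\phi_f(r^\ast,R_0)$ (it simplifies nicely because $r^\ast/R_0=1-r^\ast=1/(1+R_0)$), substitute into \eqref{expb}, and collect terms. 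I expect the constant and the $r^\ast$ terms to combine to give exactly $1$, leaving the advertised expression $1+\bigl(I(w,\Ps)+\log(1+R_0)\bigr)/\log(1-R_0)$.

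The only potentially fiddly step is verifying that $r^\ast=R_0/(1+R_0)$ is the \emph{right} choice — conceptually, it is the minimizer of $\phi_f(r,R_0)+\log(1+R_0)-(1-r)\cdot(-\log(1-R_0))$, i.e.\ the point where the tangent line to the convex function $\phi_f(\cdot,R_0)$ has slope $\log(1-R_0)$. Any other choice of $r$ would yield a valid but strictly weaker lower bound; the choice above is what produces the clean $\log(1+R_0)$ in the numerator, and also makes the inequality tight in the sense that $g(R):=(1-R)\log(1-R)+R\log(R/R_0)+\log(1+R_0)\ge 0$ attains equality at $R=r^\ast$. No further work is needed — boundary issues (e.g.\ $R_0=1$, the discrete uniform prior case) can be handled by continuity, and the inequality as stated is vacuous when $\sup_a w(B(a))=1$ since then the denominator is $0^-$ and the right-hand side is $-\infty$.
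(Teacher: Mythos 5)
Your proof is correct and follows essentially the same route as the paper: specialize Theorem \ref{man} to $f(x)=x\log x$, invoke the tangent-line conversion \eqref{expb}, and pick $r=R_0/(1+R_0)$. The only addition is your conceptual explanation of why that choice of $r$ is natural (it is where the tangent has slope $\log(1-R_0)$), whereas the paper states the choice without motivation; the computations agree.
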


\begin{proof}[Proof of Corollary \ref{cor:Fano}]

We simply apply \eqref{expb} to $f(x) = x \log x$ and $r = R_0/(1 + R_0)$, it can then be checked that
\[
\phi_f(r, R_0)= - \log (1+R_0) - \frac{1}{1+R_0} \log(1-R_0), \quad \phi_f'(r-, R_0)= \log (1-R_0),
\]
Inequality \eqref{expb} then gives
\begin{equation*}\label{eq:Fano1}
  R \geq 1 + \frac{I(w, \Ps) + \log (1 + R_0)}{\log (1 - R_0)}
\end{equation*}
which proves \eqref{gf}.
\end{proof}


\begin{remark}\label{pach}
This inequality is slightly stronger than \eqref{intro_gf} because $R_0 \leq 1$ (thus $\log(1+R_0) \leq \log 2$). For example, when $\Theta = \ac = \{0, 1\}$, $L(\theta, a) := \mathbbm{I}\{\theta \neq a\}$ and $w\{0\} = w\{1\} = 1/2$, the inequality \eqref{intro_gf} leads to a trivial bound since the right hand side of \eqref{intro_gf} is negative. However, since $R_0=1/2$, the inequality \eqref{gf} still provides a useful lower bound when $I(w, \Ps)$ is strictly  smaller than $\log 2 -\log(3/2)$.
\end{remark}


As mentioned in the introduction, the classical Fano inequality \eqref{eq:classcial_Fano} and the recent continuum Fano inequality \eqref{cf} are both special cases (restricted to uniform priors) of Corollary \ref{cor:Fano}. The proof of \eqref{cf} given in \cite{Duchi:Fano} is rather complicated
with a stronger assumption and a discretization-approximation argument. Our proof based on Theorem \ref{man} is much simpler. Lemma \ref{lem.dpi} also has its independent interest. Using Lemma \ref{lem.dpi}, we are able to recover another recently proposed variant of Fano's inequality in \citet[Proposition 2.2]{BraunPokutta}. Details of this argument are provided in Appendix  \ref{sec:supp_Fano}.

\subsection{Specialization of Theorem \ref{man} to Different $f$-divergences and Their Applications}
In addition to the generalized Fano's inequality, Theorem \ref{man} allows us to derive a class of lower bounds on Bayes risk for zero-one losses by plugging other $f$-divergences. In the next corollary, we consider some widely used $f$-divergences  and provide the corresponding Bayes risk lower bounds by inverting \eqref{man.eq} in Theorem \ref{man}.
\begin{corollary}\label{cor:lb_01_f}
  Let $L$ be zero-one valued, $w$ be any prior on $\Theta$ and $R = R_{\rm Bayes}(w, L, \Theta)$. We then have the following inequalities
  \begin{enumerate}[(i)]
    \item Chi-squared divergence:
      \begin{equation}
        \label{eq:chi}
        R \geq R_0 - \sqrt{R_0(1 - R_0) I_{\chi^2}(w, \Ps)} .
      \end{equation}
    \item Total variation distance:
      \begin{equation}
    R \geq R_0 - I_{TV}(w, \Ps).
      \label{eq:total}
    \end{equation}
    \item Hellinger distance:
      \begin{equation}
        \label{eq:hellinger_3}
        R \geq R_0 - (2 R_0 - 1) \frac{h^2}{2} - \sqrt{R_0(1 - R_0) h^2 (2 - h^2)}.
      \end{equation}
      provided $h^2 \leq 2 R_0$. Here $
        h^2=\int_{\Theta} \int_{\Theta}  H^2(P_{\theta}\| P_{\theta'})  w(\mathrm{d} \theta)w(\mathrm{d} \theta').
     $
\end{enumerate}
\end{corollary}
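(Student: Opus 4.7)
The corollary gives three bounds; each is obtained by specializing Theorem~\ref{man} to a particular choice of $f$-divergence, computing $\phi_f(R,R_0)$ for the two Bernoulli distributions appearing in its definition, and then algebraically inverting the resulting inequality $\phi_f(R,R_0) \le I_f(w,\mathcal P)$ (recalling $R \le R_0$). The plan is therefore to treat the three divergences one by one; the first two are essentially immediate, and the Hellinger case requires the bulk of the work.

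For the chi-squared divergence I would take $f(x)=x^2-1$ and directly compute, using the formula~\eqref{eq:phi}, that
\begin{equation*}
\phi_{f_2}(R,R_0) \;=\; \frac{R^2}{R_0}+\frac{(1-R)^2}{1-R_0}-1 \;=\; \frac{(R-R_0)^2}{R_0(1-R_0)}.
\end{equation*}
Plugging into~\eqref{man.eq}, this gives $(R-R_0)^2 \le R_0(1-R_0)\,I_{\chi^2}(w,\mathcal P)$, and since $R\le R_0$, taking square roots yields~\eqref{eq:chi}. For the total variation distance I would use $f(x)=|x-1|/2$, compute $\phi_f(R,R_0)=|R-R_0|$ from~\eqref{eq:phi}, and again combine with $R\le R_0$ to obtain~\eqref{eq:total}. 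These two steps are purely bookkeeping.

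The Hellinger case is the substantive one. Taking $f(x)=f_{1/2}(x)=1-\sqrt{x}$, the formula~\eqref{eq:phi} gives
\begin{equation*}
\phi_{f_{1/2}}(R,R_0) \;=\; 1-\sqrt{RR_0}-\sqrt{(1-R)(1-R_0)}.
\end{equation*}
Before applying Theorem~\ref{man}, I need to upper bound $I_{f_{1/2}}(w,\mathcal P)$ by the quantity $h^2$ that appears in the statement. Since the infimum in the definition~\eqref{eq:def_f_info} of $f$-informativity can be upper bounded by taking $Q=P_{\theta'}$ for any fixed $\theta'$ and then integrating against $w$, and since $D_{f_{1/2}}(P\|Q)=H^2(P\|Q)/2$, Fubini gives
\begin{equation*}
I_{f_{1/2}}(w,\mathcal P) \;\le\; \int\!\!\int \tfrac{1}{2}H^2(P_\theta\|P_{\theta'}) w(\mathrm d\theta)w(\mathrm d\theta') \;=\; h^2/2.
\end{equation*}
Combining this with Theorem~\ref{man} yields $\sqrt{RR_0}+\sqrt{(1-R)(1-R_0)} \ge 1-h^2/2$, which (since $1-h^2/2\ge 0$ under the assumption $h^2\le 2R_0\le 2$) can be squared and rearranged into a quadratic in $\sqrt{R}$. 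Explicitly, writing the relation as
\begin{equation*}
\bigl(\sqrt{R}-(1-h^2/2)\sqrt{R_0}\bigr)^2 \;\le\; (1-R_0)\bigl(1-(1-h^2/2)^2\bigr),
\end{equation*}
one solves for $\sqrt{R}$ and squares again to obtain the lower bound on $R$ in the claimed form. The condition $h^2\le 2R_0$ ensures that the extracted root is nonnegative so that squaring is legitimate.

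The only real obstacle is the Hellinger algebra: one has to be careful, because squaring a lower bound on $R$ twice introduces sign choices, and one needs $R\le R_0$ to pick the correct branch. The chi-squared and TV computations are straightforward applications of Theorem~\ref{man} with the corresponding $\phi_f$ computed from~\eqref{eq:phi}.
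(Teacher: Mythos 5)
Your treatments of the chi-squared and total variation cases match the paper exactly: compute $\phi_f(R,R_0)$ from \eqref{eq:phi}, feed it into \eqref{man.eq}, and invert using $R \le R_0$. These are fine.

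The Hellinger case has a genuine gap. You bound the informativity from above by averaging over the suboptimal choice $Q = P_{\theta'}$, getting $I_{f_{1/2}}(w,\Ps) \le h^2/2$. The paper instead computes the infimum in \eqref{eq:def_f_info} \emph{exactly}: the optimal $Q$ has density proportional to $u^2 = \bigl(\int_\Theta \sqrt{p_\theta}\,w(\mathrm{d}\theta)\bigr)^2$ (equality case of Cauchy--Schwarz), yielding the identity $I_{f_{1/2}}(w,\Ps) = 1 - \sqrt{1 - h^2/2}$. Since $1 - \sqrt{1-h^2/2} < h^2/2$ whenever $0 < h^2 < 2$, your bound is strictly looser. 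Feeding each into \eqref{man.eq} gives
\begin{equation*}
\sqrt{RR_0}+\sqrt{(1-R)(1-R_0)} \;\ge\; \sqrt{1-h^2/2}
\quad\text{(paper)}
\qquad\text{vs.}\qquad
\sqrt{RR_0}+\sqrt{(1-R)(1-R_0)} \;\ge\; 1 - h^2/2
\quad\text{(yours),}
\end{equation*}
and since the left side is increasing in $R$ on $[0,R_0]$, the paper's larger right side produces a larger lower bound on $R$. If you carry out your algebra with $c = 1 - h^2/2$, the cross term comes out to $(1 - h^2/2)\sqrt{R_0(1-R_0)\,h^2(4-h^2)}$ rather than $\sqrt{R_0(1-R_0)\,h^2(2-h^2)}$ -- a strictly weaker conclusion that does not reduce to \eqref{eq:hellinger_3}. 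To close the gap you need the exact Hellinger informativity, not merely the Fubini upper bound; the intermediate inequality $(\sqrt{R} - c\sqrt{R_0})^2 \le (1-R_0)(1-c^2)$ then gives \eqref{eq:hellinger_3} once you substitute $c = \sqrt{1-h^2/2}$.
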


See Appendix \ref{colr} for the proof of the corollary. The special case of Corollary \ref{cor:lb_01_f} for $\Theta = \ac = \{1, \dots, N\}$, $L(\theta, a) = \mathbbm{I}\{\theta \neq a\}$ and $w$ being the uniform prior has been discovered previously in \cite{Aditya:fdiv}.  It is clear from Corollary~\ref{cor:lb_01_f} that the choice of $f$-divergence will affect the tightness of the lower bound for $R$. In Appendix \ref{sec:compare}, we provide a qualitative comparison of the lower bounds~\eqref{gf}, \eqref{eq:chi} and \eqref{eq:hellinger_3}. In particular, we show that in the discrete setting with $\Theta = \ac = \{1, \dots, N\}$, the lower bounds induced by the KL divergence and the chi-squared divergence are much stronger than the bounds given by the Hellinger distance. Therefore, in most applications in this paper, we shall only use the bounds involving the KL divergence and the chi-squared divergence.

 Corollary \ref{cor:lb_01_f} can be used to recover classical inequalities of Le Cam (for two point hypotheses) and Assouad (Theorem 2.12 in \cite{Tsybakov:nonpara} with both total variation distance and Hellinger distance) and Theorem 2.15 in \cite{Tsybakov:nonpara} that involves fuzzy hypotheses. The details are presented in Appendix \ref{sec:supp_LeCam}.

\subsection{Birg\'e-Gushchin's Inequality}\label{beig}

In this section, we expand \eqref{eq:ext_main.eq_1} in Remark \ref{rem:minimax} to obtain a minimax risk lower bound due to \cite{Gushchin:03} and \cite{Birge:ineq}, which presents an improvement of the classical Fano's inequality when specializing to KL divergence.
\begin{proposition}\citep{Gushchin:03, Birge:ineq}\label{prop:Birge}
  Consider the finite parameter and action space $\Theta=\ac=\{\theta_0, \theta_1, \ldots, \theta_{N} \}$  and the zero-one valued indicator loss $L(\theta, a) = \mathbbm{I}\{\theta \neq a\}$,  for any $f$-divergence,
\begin{equation}
  \phi_f(R_{\rm minimax}, 1 - R_{\rm minimax}/N) \leq \min_{0 \leq j \leq N} \frac{1}{N} \sum_{i: i\neq j} D_f\left(P_{\theta_i} || P_{\theta_j}  \right).
  \label{eq:Birge}
\end{equation}
\end{proposition}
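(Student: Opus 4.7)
The plan is to apply Lemma \ref{lem.dpi} (combined with the minimax extension in Remark \ref{rem:minimax}) to a carefully chosen prior and reference measure $Q$ for each $j$. Specifically, for fixed $j \in \{0, 1, \ldots, N\}$, I would take $w = w_j$ to be the uniform distribution on the $N$ points $\{\theta_i : i \neq j\}$ and set $Q = P_{\theta_j}$. With this choice, the right-hand side of Lemma \ref{lem.dpi} becomes exactly the inner quantity $\frac{1}{N} \sum_{i \neq j} D_f(P_{\theta_i} \| P_{\theta_j})$ on the right of \eqref{eq:Birge}. The remainder of the argument consists in identifying the right-hand side of the lemma with $\psi_{N,f}(R_{\rm minimax})$.

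Let $\dr$ be the minimax decision rule (which exists for finite $\Theta$ and $\ac$ as noted in Remark \ref{rem:minimax}). Two computations are central. First, $R^{\dr} = \frac{1}{N}\sum_{i \neq j} \E_{\theta_i} L(\theta_i, \dr(X)) \leq R_{\rm minimax}$ since each term is at most the worst-case risk. Second, because $L$ is zero--one and $\dr(x) \in \{\theta_0, \ldots, \theta_N\}$, the identity $\sum_{i \neq j} \mathbb{I}\{\dr(x) \neq \theta_i\} = N - 1 + \mathbb{I}\{\dr(x) = \theta_j\}$ yields
\[
R_Q^{\dr} \;=\; 1 - \frac{1}{N}\,P_{\theta_j}\{\dr(X) \neq \theta_j\} \;\geq\; \frac{N - R_{\rm minimax}}{N},
\]
since $P_{\theta_j}\{\dr(X) \neq \theta_j\}$ is the risk of $\dr$ at $\theta_j$, which is at most $R_{\rm minimax}$. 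The upper bound $R_{\rm minimax} \leq N/(N+1)$ (attained by the randomized rule that ignores $X$ and outputs each $\theta_k$ with probability $1/(N+1)$) is then algebraically equivalent to $R_{\rm minimax} \leq (N - R_{\rm minimax})/N$, which guarantees that the first argument of $\phi_f$ lies in the range $[0,\,R_Q^{\dr}]$ needed to apply the monotonicity properties of Lemma \ref{lem:lower_property}.

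Now combine the pieces. Applying Lemma \ref{lem.dpi} and then invoking the monotonicity of $\phi_f$ in each argument (non-increasing in the first argument on $[0, b]$, non-decreasing in the second argument on $[a, 1]$),
\[
\frac{1}{N}\sum_{i \neq j} D_f(P_{\theta_i}\|P_{\theta_j}) \;\geq\; \phi_f(R^{\dr}, R_Q^{\dr}) \;\geq\; \phi_f\!\left(R_{\rm minimax},\, R_Q^{\dr}\right) \;\geq\; \phi_f\!\left(R_{\rm minimax},\, \tfrac{N - R_{\rm minimax}}{N}\right).
\]
A direct substitution into \eqref{eq:phi} with $a = x$ and $b = (N-x)/N$ shows $\phi_f(x, (N-x)/N) = \psi_{N,f}(x)$, so the last expression equals $\psi_{N,f}(R_{\rm minimax})$. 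Since the chain holds for every $j \in \{0, 1, \ldots, N\}$, taking the minimum over $j$ yields \eqref{eq:Birge}.

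The only delicate step I anticipate is bookkeeping the direction of the monotonicity arguments: one must verify $R^{\dr} \leq R_{\rm minimax} \leq (N-R_{\rm minimax})/N \leq R_Q^{\dr}$ in order to legitimately replace $R^{\dr}$ by $R_{\rm minimax}$ in the first argument of $\phi_f$ and then replace $R_Q^{\dr}$ by the smaller quantity $(N-R_{\rm minimax})/N$ in the second argument. The algebraic identification $\phi_f(x,(N-x)/N) = \psi_{N,f}(x)$ itself is a routine computation from the definition of $\phi_f$.
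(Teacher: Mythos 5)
Your proof is correct and matches the paper's argument in every essential respect: the same choice of uniform prior on $\{\theta_i : i\neq j\}$ with $Q = P_{\theta_j}$ and the minimax rule $\dr$, the same computation of $R_Q^{\dr}$ via the identity $\sum_{i\neq j}\mathbb{I}\{\dr(x)\neq\theta_i\} = N-\mathbb{I}\{\dr(x)\neq\theta_j\}$, the same randomized-rule bound $R_{\rm minimax}\leq N/(N+1)$ to justify the monotonicity chain, and the same algebraic identification $\phi_f(x,(N-x)/N)=\psi_{N,f}(x)$. The only difference is that you spell out the two monotonicity steps (in the first and then second argument of $\phi_f$) explicitly, which the paper compresses into a single invocation of \eqref{eq:ext_main.eq_1}; this is a presentational nuance, not a different route.
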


\begin{proof}[Proof of Proposition \ref{prop:Birge}]

To prove Proposition \ref{prop:Birge}, it is enough to prove that
$\frac{1}{N}\sum_{i:i \neq j} D_f(P_{\theta_i}||P_{\theta_j}) \geq
  \phi_f(R_{\rm minimax}, 1 - R_{\rm minimax}/N) $ for every $j \in \{0, \dots,
N\}$. Without loss of generality, we assume that $j = 0$. We
apply~\eqref{main.eq1} with the uniform distribution on $\Theta
\setminus \{\theta_0\} = \{\theta_1, \dots, \theta_N\}$ as $w$, $Q =
P_{\theta_0}$ and the minimax rule for the problem as $\dr$. Because
$\dr$ is the minimax rule, $R^{\dr} \leq R_{\rm minimax}$. Also
\begin{equation*}
  R_Q^{\dr} = \frac{1}{N} \sum_{i=1}^N \E_{\theta_0} L(\theta_i, \dr(X)) = \frac{1}{N}\E_{\theta_0}  \sum_{i=1}^N \mathbbm{I} \{\theta_i \neq \dr(X)\}.
\end{equation*}
It is easy to verify that $\sum_{i=1}^N \mathbbm{I} \{\theta_i \neq \dr(X)\} = N -\mathbbm{I} \{\theta_0 \neq \dr(X)\}$. We thus have $R_Q^{\dr} = 1 - \E_{\theta_0} L(\theta_0, \dr(X))/N$. Because $\dr$ is minimax, $\E_{\theta_0} L(\theta_0, \dr(X)) \leq R_{\rm minimax}$  and thus
\begin{equation}\label{dan}
R_Q^{\dr} \geq 1 - R_{\rm minimax}/N.
\end{equation}

On the other hand, we have $R_{\rm minimax} \leq N/(N+1)$. To see
this, note that the minimax risk is upper bounded by the maximum risk
of a random decision rule, which chooses among the $N+1$ hypotheses
uniformly  at random. For this random decision rule, its risk is
$\frac{N}{N+1}$ no matter what the true hypothesis is. Thus,
$\frac{N}{N+1}$ is an upper bound on the minimax risk. We thus have,
from~\eqref{dan}, that $R_Q^{\dr} \geq 1 -R_{\rm minimax}/N \geq
R_{\rm minimax}$. We can thus apply~\eqref{eq:ext_main.eq_1} to
obtain
\begin{equation*}
  \frac{1}{N} \sum_{i=1}^N D_f(P_{\theta_i}||P_{\theta_0}) \geq
  \phi_f(R_{\rm minimax}, 1 - R_{\rm minimax}/N).
\end{equation*}
which completes the proof Proposition \ref{prop:Birge}.
\end{proof}

\section{Bayes Risk Lower Bounds for Nonnegative Loss Functions}
\label{sec:Bayes_general}

\begin{figure}[!t]
\centering
  \begin{subfigure}{.4\textwidth}
  \centering
  \includegraphics[width=\textwidth]{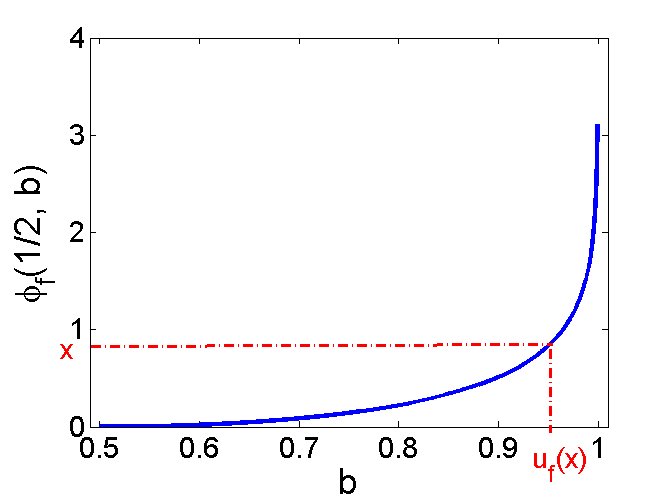}
  \caption{$\phi_f(1/2, b)$}
  \label{fig:phi}
\end{subfigure}\hspace{0.1cm}
\begin{subfigure}{.4\textwidth}
  \centering
  \includegraphics[width=\textwidth]{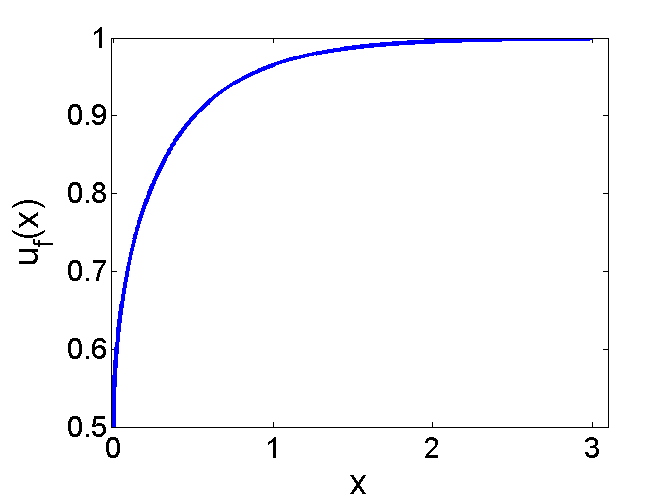}
  \caption{$u_f(x)$}
  \label{fig:u}
\end{subfigure}
\caption{Illustration of $\phi_f(1/2,b)$ and $u_f(x)$ for $f(x)=x\log x$.}
\label{fig:phi_u}\vspace{-5mm}
\end{figure}
In the previous section, we discussed Bayes risk lower bounds for zero-one valued loss functions. We deal with general nonnegative loss functions in this section. The main result of this section, Theorem \ref{bwl}, provides lower bounds for $R_{\rm Bayes}(w, L; \Theta)$ for any given loss $L$ and prior $w$. To state this result, we need the following notion.
Fix $f \in \C$ and recall the definition of $\phi_f$ in \eqref{eq:phi}. We define   $u_f: [0, \infty) \mapsto [1/2,1]$ by
\begin{equation}\label{ufi}
  u_f(x) := \inf \left\{1/2 \leq b \leq 1 : \phi_f(1/2, b) > x \right\}
\end{equation}
and if $\phi_f(1/2, b) \leq x$ for every $b \in [1/2, 1]$, then we take $u_f(x)$
to be 1. By Lemma \ref{lem:lower_property}, it is easy to see that $u_f(x)$ is a non-decreasing function of $x$. For example, for KL-divergence with $f(x)= x \log x$, we have $\phi_f(1/2, b)=\frac{1}{2} \log \frac{1}{4b(1-b)}$ and $u_f(x) = \frac{1}{2}+ \frac{1}{2} \sqrt{1-e^{-2x}}$ (see Figure \ref{fig:phi_u}). We are now ready to state the main theorem of this paper.
\begin{theorem}\label{bwl}
For every $\Theta, \ac, L, w$ and $f \in \C$, we have
\begin{equation}
  R_{\rm Bayes}(w, L; \Theta)  \geq \frac{1}{2} \sup\left\{t > 0 :  \sup_{a \in \mathcal{A}}  w(B_t(a,L))  < 1- u_f(I_f(w, \Ps)) \right\},
  \label{eq:main_bwl}
\end{equation}
where
\begin{equation}  \label{bta}
B_t(a, L) := \{\theta \in \Theta: L(\theta, a) < t\} \qt{for $a \in \ac$ and $t > 0$}.
\end{equation}
\end{theorem}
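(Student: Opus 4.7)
My plan is to reduce the general loss case to the zero-one case already handled by Theorem \ref{man}. Fix $t > 0$ and define the zero-one valued loss $\tilde{L}_t(\theta, a) := \mathbb{I}\{L(\theta, a) \geq t\}$. Since $L(\theta, a) \geq t\,\tilde{L}_t(\theta, a)$ pointwise, integrating against $w$ and any decision rule $\dr$ and then taking the infimum over $\dr$ yields
\[
R_{\rm Bayes}(w, L; \Theta) \;\geq\; t\, R_{\rm Bayes}(w, \tilde{L}_t; \Theta).
\]
It therefore suffices to show that $R_{\rm Bayes}(w, \tilde{L}_t; \Theta) \geq 1/2$ for every $t$ in the admissible set $T := \{t > 0 : \sup_{a \in \ac} w(B_t(a, L)) < 1 - u_f(I_f(w, \Ps))\}$; taking the supremum over such $t$ then produces \eqref{eq:main_bwl}.

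For $\tilde{L}_t$ the set $\{\theta : \tilde{L}_t(\theta, a) = 0\}$ coincides with $B_t(a, L)$, so by the alternative expression \eqref{alte} the no-data Bayes risk is $R_0^{(t)} := 1 - \sup_{a \in \ac} w(B_t(a, L))$. Applying Theorem \ref{man} to the zero-one loss $\tilde{L}_t$ (with the same prior $w$, parameter space $\Theta$, action space $\ac$, and family $\Ps$) then gives
\[
I_f(w, \Ps) \;\geq\; \phi_f\bigl(R_{\rm Bayes}(w, \tilde{L}_t; \Theta),\, R_0^{(t)}\bigr).
\]

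Now suppose $t \in T$, so $R_0^{(t)} > u_f(I_f(w, \Ps)) \geq 1/2$, and assume for contradiction that $R_{\rm Bayes}(w, \tilde{L}_t; \Theta) < 1/2$. Using the infimum definition \eqref{ufi} of $u_f$ together with the fact that $b \mapsto \phi_f(1/2, b)$ is non-decreasing on $[1/2, 1]$ (Lemma \ref{lem:lower_property}), there exists $b^\ast \in (u_f(I_f(w, \Ps)),\, R_0^{(t)}]$ with $\phi_f(1/2, b^\ast) > I_f(w, \Ps)$, and monotonicity in $b$ upgrades this to $\phi_f(1/2, R_0^{(t)}) \geq \phi_f(1/2, b^\ast) > I_f(w, \Ps)$. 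Since $R_{\rm Bayes}(w, \tilde{L}_t; \Theta) < 1/2 \leq R_0^{(t)}$ lies in $[0, R_0^{(t)}]$ and the map $r \mapsto \phi_f(r, R_0^{(t)})$ is non-increasing there (Lemma \ref{lem:lower_property} again),
\[
\phi_f\bigl(R_{\rm Bayes}(w, \tilde{L}_t; \Theta),\, R_0^{(t)}\bigr) \;\geq\; \phi_f\bigl(1/2,\, R_0^{(t)}\bigr) \;>\; I_f(w, \Ps),
\]
which contradicts the Theorem \ref{man} bound displayed above. Hence $R_{\rm Bayes}(w, \tilde{L}_t; \Theta) \geq 1/2$, so $R_{\rm Bayes}(w, L; \Theta) \geq t/2$ for every $t \in T$, and \eqref{eq:main_bwl} follows.

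The one delicate point I anticipate is the strict-versus-non-strict bookkeeping around the infimum defining $u_f$: the strict inequality in the definition of $T$ is precisely what lets me extract a $b^\ast \leq R_0^{(t)}$ with $\phi_f(1/2, b^\ast)$ \emph{strictly} above $I_f(w, \Ps)$, which is what ultimately drives the contradiction. Beyond this, the argument reduces to the monotonicity properties of $\phi_f$ already established in Lemma \ref{lem:lower_property} and a single invocation of Theorem \ref{man}, with no further assumptions on $\Theta, \ac, w$ or on the existence of a Bayes rule.
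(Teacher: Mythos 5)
Your proposal is correct and follows essentially the same route as the paper's proof: reduce to the truncated zero-one loss $\tilde{L}_t$, observe $L \geq t\tilde{L}_t$, identify $R_0^{(t)} = 1 - \sup_a w(B_t(a,L))$, invoke Theorem \ref{man}, and then use the monotonicity properties of $\phi_f$ from Lemma \ref{lem:lower_property} together with the definition of $u_f$ to conclude $R_{\rm Bayes}(w,\tilde{L}_t;\Theta) \geq 1/2$. The only differences are presentational: you phrase the final step as a contradiction where the paper argues directly, and your intermediate $b^\ast$ can in fact be taken to be $R_0^{(t)}$ itself, since monotonicity of $b \mapsto \phi_f(1/2,b)$ makes $\{b : \phi_f(1/2,b) > I\}$ upward-closed, so $R_0^{(t)} > u_f(I)$ immediately gives $\phi_f(1/2, R_0^{(t)}) > I$.
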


\begin{proof}[Proof of Theorem \ref{bwl}]

Fix $\Theta, \ac, L, w$ and $f$. Let $I := I_f(w, \Ps)$ be a shorthand notation. Suppose $t > 0$ is such that
  \begin{eqnarray}
  \label{eq:cond_t}
  \sup_{a \in \ac} w \left(B_t(a,L)\right) < 1-u_f(I).
  \end{eqnarray}
We prove below that $R_{\rm Bayes}(w, L; \Theta) \geq t/2$ and this would complete the proof. Let $L_{t}$ denote the zero-one valued loss function $L_{t}(\theta, a) :=  \mathbbm{I} \left\{L(\theta, a) \geq t \right\}$. It is obvious that $L \geq t L_{t}$ and hence the proof will be complete if we establish that $R_{\rm Bayes}(w, L_{t}; \Theta) \geq 1/2$.  Let $R := R_{\rm Bayes}(w, L_{t}; \Theta)$ for a shorthand notation.

Because $L_t$ is a zero-one valued loss function, Theorem \ref{man} gives
\begin{equation}\label{sb}
  I \geq \phi_f(R, R_0) \qt{where $R_0 = 1 - \sup_{a \in \ac} w \left( B_t(a, L) \right)$}.
\end{equation}
By \eqref{eq:cond_t}, it then follows that $R_0 > u_f(I)$. By definition of $u_f(\cdot)$, it is clear that there exists $b^* \in [1/2, R_0)$ such that $\phi(1/2, b^*) > I$ (this in particular implies that $R_0 \geq 1/2$).  Lemma \ref{lem:lower_property} implies that $b \mapsto \phi_f(1/2, b)$ is non-decreasing for $b\in[1/2, 1]$, which yields $\phi_f(1/2, b^*) \leq \phi_f(1/2, R_0)$. The above two inequalities imply $I <  \phi_f(1/2, R_0)$. Combining this inequality with~\eqref{sb}, we have
\begin{equation*}
  \phi_f(1/2, R_0) > I \geq \phi_f(R, R_0).
\end{equation*}
Lemma \ref{lem:lower_property} shows that  $a \mapsto \phi_f(a, R_0)$ is non-increasing for $a \in [0, R_0]$. Thus, we have $R \geq 1/2$.
\end{proof}

We further note that because $u_f(x)$ is non-decreasing in $x$, one can replace $I_f(w, \Ps)$ in \eqref{eq:main_bwl} by any upper bound $\Iu$ i.e., for any $\Iu \geq I_f(w, \Ps)$, we have
\begin{equation}\label{upbo}
  R_{\rm Bayes}(w, L; \Theta)  \geq \frac{1}{2} \sup\left\{t > 0 :  \sup_{a \in \mathcal{A}}  w(B_t(a,L))  < 1- u_f(\Iu) \right\}.
\end{equation}
This is useful since $I_f(w, \Ps)$ is often difficult to calculate exactly. When $f(x)=x\log x$, \cite{haussler1997} provided a useful upper bound on the mutual information $I(w, \Ps)$. We describe this result in Section \ref{sec:upper_f_informativity} where we also extend it to power divergences $f_\alpha$ for  $\alpha \not \in [0,1]$ (which covers the case of chi-squared divergence).

\begin{remark}
From the proof of Theorem \ref{bwl}, it can be observed that the constant $1/2$ in the right hand side of \eqref{eq:main_bwl} and in the definition of $u_f(\cdot)$ can be replaced by any $c \in (0,1]$. This gives the sharper lower bound:
\small
\begin{eqnarray*}
     R_{\rm Bayes}(w, L; \Theta) \geq \sup_{c \in (0,1]} \left( c \sup \left\{t > 0 : \sup_{a
      \in \ac} w \left(B_t(a,L) \right) < 1- u_{f,c}(I_f(w, \Ps)) \right\} \right),
\end{eqnarray*}
\normalsize
where $u_{f,c}(x)=\inf\{c\leq b \leq 1: \phi_f(c, b) \geq x \}$. Since obtaining exact constants is not our main concern, the inequality \eqref{eq:main_bwl} is usually sufficient to provide Bayes risk lower bounds with correct dependence on the model and prior.
\end{remark}

\begin{remark}\label{rem:Bayes_decomp}

We note that the lower bound presented in Theorem \ref{bwl} might not be tight for some special priors, e.g., when the prior $w$ has extremely large density in some small region of the parameter space. We call such regions with unbounded density as \emph{spikes} in the prior distribution. As a concrete example, let $\Theta = \ac$ be a subset of a  finite dimensional Euclidean space containing the origin with $L$ being the Euclidean distance and let $w$ denote the mixture of the uniform priors over the balls $B_1(0, L)$ and $B_\epsilon(0, L)$ for some very small $0 < \epsilon \ll 1$. In this case, the mixture component $B_\epsilon(0, L)$ is a spike. If $\epsilon$ is very small, then the term $\sup_{a \in \mathcal{A}} w(B_t(a,L))$ might be too big for Theorem \ref{bwl} to establish a tight lower bound.

Even in such extreme cases, the tight lower bound can be salvaged by partitioning the parameter space $\Theta$ into finite or countably many disjoint subsets $\Theta_i, i \geq 0$ and to apply Theorem \ref{bwl} to $w$ restricted to each $\Theta_i$. To illustrate this technique, suppose that $w$ has a  Lebesgue density  $\varphi$ that is bounded from above. Let $\varphi_{\max}$ denote the supremum of $\varphi$. We partition the parameter space $\Theta$ into disjoint subsets $\Theta_0,\Theta_1,\dots$ with
 \begin{equation}\label{eq:space_part}
	\Theta_i := \{\theta\in\Theta: 2^{-(i+1)} \varphi_{\max} < \varphi(\theta) \leq 2^{-i} \varphi_{\max} \}.
 \end{equation}
 Then, we apply Theorem \ref{bwl} to $w$ restricted to each $\Theta_i$. More specifically, let $w_i$ denote the probability measure $w$ restricted to $\Theta_i$ i.e., $w_i(S) := w(S \cap \Theta_i)/w(\Theta_i)$ for any measurable set $S \subseteq  \Theta_i$.  we have
\begin{equation}\label{eq:prior_decomp}
  R_{\rm Bayes}(w, L; \Theta) \geq \sum_{i} w(\Theta_i) R_{\rm Bayes}(w_i, L; \Theta_i),
\end{equation}
where $R_{\rm Bayes}(w_i, L; \Theta_i)=\inf_{\dr} \int_{\Theta_i} \E_{\theta} L(\theta, \dr(X)) w_i(\mathrm{d}\theta)$.
To see this, for any decision rule $\dr$, we have
$
    	R^\dr(w, L; \Theta) = \sum_{i=1}^\infty w(\Theta_i)R^{\dr} (w_i, L; \Theta_i);
$
then take infimum over all possible $\dr$ on both sides,
\begin{multline*}
 R_{\rm Bayes}(w, L;\Theta) = \inf_{\dr} R^\dr(w, L; \Theta) \\ \geq \sum_{i=1}^\infty w(\Theta_i) \inf_{\dr}  R^{\dr} (w_i, L; \Theta_i)=\sum_{i=1}^\infty w(\Theta_i) R_{\rm Bayes}(w_i, L; \Theta_i)
\end{multline*}
One can lower bound each Bayes risk $R_{\rm Bayes}(w_i, L; \Theta_i)$ for all $i$ using Theorem \ref{bwl}. Since the density of $w_i$ differs by a factor at most $2$, the spiking prior problem will no longer exist while applying Theorem \ref{bwl} for $w_i$. We also note that another useful application of such a partitioning technique is presented in Corollary \ref{nd}.

Now take the concrete example of the mixture of the uniform priors over  $B_1:=B_1(0, L)$ and $B_\epsilon:= B_\epsilon(0, L)$.  It is clear from \eqref{eq:space_part} that $\Theta_0=B_\epsilon$ and $\Theta_k=B_1\backslash B_\epsilon$ for some $k>0$ and the rest of $\Theta_i$'s are empty sets.
Applying \eqref{eq:prior_decomp},  we have
    \begin{align*}
      R_{\rm Bayes}(w, L; \Theta) \geq & w(B_{\epsilon}) R_{\rm Bayes}(w_1, L; B_{\epsilon})  + w(B_1 \backslash B_{\epsilon}) R_{\rm Bayes}(w_2, L; B_1  \backslash B_{\epsilon})  \\
      \geq & w(B_1 \backslash B_{\epsilon}) R_{\rm Bayes}(w_2, L; B_1  \backslash B_{\epsilon})
    \end{align*}
Note that $w(B_1 \backslash B_{\epsilon})$ is lower bounded by a universal constant. Then we can lower bound $R_{\rm Bayes}(w_2, L; B_1  \backslash B_{\epsilon})$ using Theorem \ref{bwl} and obtain a tight lower bound up to a constant factor that is independent of $\epsilon$ (see an example of deriving Bayes risk lower bound for estimating the mean of a Gaussian model with uniform prior on a ball in Section~\ref{sec:upper_f_informativity}).

\end{remark}

For specific $f \in \C$, the right hand side of \eqref{upbo} can be explicitly evaluated as shown in the next corollary.

\begin{corollary}\label{cor:bwl}
Fix $\Theta, \ac, L, w$ and $\Ps$. The Bayes risk $R_{\rm Bayes}(w, L; \Theta)$ satisfies each of the following inequalities (the quantity $\Iu$ represents an upper bound on the corresponding $f$-informativity):
  \begin{enumerate}[(i)]
    \item KL divergence:
    \small
       \begin{equation}
  R_{\rm Bayes}(w,L;\Theta) \geq \frac{1}{2} \sup \left\{t > 0 : \sup_{a
      \in \ac} w\left(B_t(a,L)\right)< \frac{1}{4} e^{-2 \Iu} \right\}.
    \label{eq:I_bayes_KL}
\end{equation}
\normalsize
    \item Chi-squared divergence:
    \small
      \begin{equation}
    R_{\rm Bayes}(w, L;\Theta) \geq \frac{1}{2} \sup \left\{t > 0 :
      \sup_{a \in \ac} w \left( B_t(a,L) \right) < \frac{1}{4\left(1+ \Iu\right)} \right\}.
        \label{eq:I_bayes_chi}
  \end{equation}
  \normalsize
    \item Total variation distance:
    \small
      \begin{equation}
    R_{\rm Bayes}(w, L;\Theta) \geq \frac{1}{2} \sup \left\{t> 0 :
      \sup_{a \in \ac} w \left(B_t(a,L) \right)  < \frac{1}{2} - \Iu \right\}.
      \label{eq:I_bayes_TV}
  \end{equation}
  \normalsize
    \item Hellinger distance:
    If $\Iu < 1-1/\sqrt{2}$, then we have
    \small
  \begin{equation}
    R_{\rm Bayes}(w,L;\Theta) \geq \frac{1}{2} \sup \left\{t > 0 :
      \sup_{a \in \ac} w \left(B_t(a,L) \right) < \frac{1}{2} - \left(1- \Iu\right)\sqrt{ \Iu\left(2- \Iu\right)} \right\}.
      \label{eq:I_bayes_Hellinger}
  \end{equation}
  \normalsize
  \end{enumerate}
\end{corollary}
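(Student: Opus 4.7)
The plan is to apply Theorem \ref{bwl} in its upper-bounded form \eqref{upbo}, and to discharge the abstract quantity $1 - u_f(\Iu)$ into the concrete numerical expressions appearing in (i)--(iv). Since $u_f$ is nondecreasing, any inequality of the form $\sup_a w(B_t(a,L)) < \lambda$ with $\lambda \leq 1 - u_f(\Iu)$ is stronger than the condition in \eqref{upbo}; thus it suffices in each case to compute $u_f$ explicitly (or exhibit a convenient lower bound for $1 - u_f$).

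For each of the four divergences I would first specialize formula \eqref{eq:phi} to $a = 1/2$. Direct calculation gives $\phi_f(1/2, b) = \tfrac{1}{2}\log\tfrac{1}{4b(1-b)}$ for KL, $\phi_f(1/2, b) = \tfrac{1}{4b(1-b)} - 1$ for chi-squared, $\phi_f(1/2, b) = b - \tfrac{1}{2}$ on $b \in [1/2,1]$ for total variation, and $\phi_f(1/2, b) = 1 - \tfrac{1}{\sqrt 2}\bigl(\sqrt{b} + \sqrt{1-b}\bigr)$ for Hellinger (i.e.\ $f_{1/2}(x) = 1 - \sqrt{x}$).

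Next I would invert $\phi_f(1/2,\cdot) = x$ on $[1/2,1]$ to read off $u_f(x)$. For TV this immediately yields $u_f(x) = 1/2 + x$, and hence $1 - u_f(x) = 1/2 - x$, giving \eqref{eq:I_bayes_TV}. For KL and chi-squared, the defining relation has the form $u_f(x)(1 - u_f(x)) = c(x)$, with $c(x) = e^{-2x}/4$ and $c(x) = 1/(4(1+x))$ respectively; solving on $[1/2,1]$ gives $1 - u_f(x) = \tfrac{1}{2} - \tfrac{1}{2}\sqrt{1 - 4c(x)}$. To obtain the cleaner forms in the corollary, I would then invoke the elementary inequality
\[
\tfrac{1}{2} - \tfrac{1}{2}\sqrt{1 - y} \;\geq\; \tfrac{y}{4}, \qquad y \in [0,1],
\]
which after squaring reduces to $y^2 \geq 0$, to conclude $1 - u_f(x) \geq c(x)$. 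Plugging $x = \Iu$ and substituting into \eqref{upbo} yields \eqref{eq:I_bayes_KL} and \eqref{eq:I_bayes_chi}. For Hellinger, the condition $\phi_f(1/2, b) > x$ becomes $\sqrt{b} + \sqrt{1-b} < \sqrt{2}(1-x)$; squaring and simplifying gives $b(1-b) < \bigl((1-x)^2 - 1/2\bigr)^2$, which requires $x < 1 - 1/\sqrt 2$ in order to be nonvacuous. On that range, solving for $u_f$ and simplifying $\sqrt{1 - 4\bigl((1-x)^2 - 1/2\bigr)^2} = 2(1-x)\sqrt{x(2-x)}$ produces exactly the bound in \eqref{eq:I_bayes_Hellinger}.

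The proof is essentially an algebraic exercise in inverting $b \mapsto \phi_f(1/2,b)$, so no genuine obstacle arises. The only places requiring a bit of care are (a) ensuring that the replacement of $I_f(w,\Ps)$ by the upper bound $\Iu$ goes through — which works because $u_f$ is nondecreasing so $1 - u_f(\Iu) \leq 1 - u_f(I_f(w,\Ps))$ and the condition in \eqref{eq:main_bwl} enters with a strict ``$<$''; (b) the elementary inequality used for KL and chi-squared that trades the exact square-root expression for the simpler $c(x)$; and (c) the restriction $\Iu < 1 - 1/\sqrt{2}$ in the Hellinger case, which is precisely the regime in which $u_f(\Iu) < 1$ and the bound is nontrivial.
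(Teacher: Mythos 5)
Your proof is correct and follows essentially the same route as the paper: in each case you compute $\phi_f(1/2,b)$, invert it on $[1/2,1]$ to obtain $u_f$, and for KL and chi-squared relax the exact square-root expression for $1-u_f$ via an elementary inequality. Your unified presentation through $c(x)$ and the single bound $\tfrac12 - \tfrac12\sqrt{1-y}\geq y/4$ packages what the paper does as the two separate estimates $\sqrt{1-a}\leq 1-a/2$ and $\sqrt{x/(1+x)}\leq 1-\tfrac{1}{2(1+x)}$, but the underlying algebra is identical.
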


\begin{proof}[Proof of Corollary \ref{cor:bwl}]

Inequality \eqref{eq:I_bayes_KL} involving KL divergence:  Suppose $f(x) = x\log x$ so that $D_f(P||Q) = D(P||Q)$ equals the KL divergence. Then the function $u_f(x)$ in~\eqref{ufi}
has the expression for all $x>0$,
\begin{equation*}
  u_f(x) = \inf \left\{1/2 \leq b \leq 1: b(1-b) < e^{-2x}/4 \right\}
  = \frac{1}{2} + \frac{1}{2} \sqrt{1 - e^{-2x}}.
\end{equation*}
The elementary inequality $\sqrt{1 - a} \leq 1 - a/2$ gives for all $x>0$,
\begin{equation*}
  u_f(x) \leq 1 - \frac{1}{4} e^{-2x}.
\end{equation*}
Inequality~\eqref{eq:main_bwl} reduces to the desired inequality \eqref{eq:I_bayes_KL}:
\begin{equation*}
  R_{\rm Bayes}(w,L; \Theta) \geq \frac{1}{2} \sup \left\{t > 0 : \sup_{a
      \in \ac} w\left(B_t(a,L)\right)< \frac{1}{4} e^{-2 \Iu} \right\}.
\end{equation*}

The proof of the Bayes risk lower bounds for  the other three $f$-divergences are similar and thus we only present the form of $u_f(x)$.  Inequality \eqref{eq:I_bayes_chi} involves chi-squared divergence with $f(x)=x^2-1$. Therefore, we have  for all $x >0$,
  \begin{equation*}
    u_f(x) = \inf \left\{1/2 \leq b \leq 1 : \frac{(1 -
        2b)^2}{4b(1-b)} > x \right\} = \frac{1}{2} + \frac{1}{2}
    \sqrt{\frac{x}{1+x}}  \leq 1- \frac{1}{4(1+x)}.
  \end{equation*}

  Inequality \eqref{eq:I_bayes_TV} involves total variation distance with $f(x) = |x-1|/2$. Then
  \begin{equation*}
    u_f(x) = \inf \left\{1/2 \leq b \leq 1 : |1 - 2b| > 2x \right\} =
    \frac{1}{2} + x.
  \end{equation*}

  Inequality \eqref{eq:I_bayes_Hellinger} involves Hellinger divergence with $f(x)=1-\sqrt{x}$ and  thus
\begin{align*}
  u_f(x) & = \inf \left\{1/2 \leq b \leq 1: 1-\sqrt{b/2} -\sqrt{(1-b)/2}  >x \right\} \\
        &= \begin{cases}
    1  & \text{if} \; x\geq 1-1/\sqrt{2} \\
     \frac{1}{2} + (1-x) \sqrt{x(2-x)} & \text{if} \;  x < 1-1/\sqrt{2}.
  \end{cases}
\end{align*}
\end{proof}

\begin{remark}\label{zhc}
  A special case of Corollary \ref{cor:bwl}(i) appeared as
\citet[Theorem 6.1]{zhang2006information}. To see that \citet[Theorem 6.1]{zhang2006information} is indeed a special case of \eqref{eq:I_bayes_KL}, note first that \eqref{eq:I_bayes_KL} is equivalent to
  \begin{equation}\label{zex}
    R_{\text{Bayes}}(w, L; \Theta) \geq \frac{1}{2} \sup \left\{t > 0
      : \inf_{a \in \ac} \frac{1}{w (B_t(a, L))} > 2 I^{\text{up}} +
      \log 4 \right\}.
  \end{equation}
  Here $I^{\text{up}}$ is any upper bound on the mutual
  information. One such upper bound on the mutual information is
  \begin{equation}\label{upp}
    I^{\text{up}} = \int_{\Theta} \int_{\Theta} D(P_{\theta} \|
    P_{\xi}) w(d\xi) w(d \theta)
  \end{equation}
   That $I^{\text{up}}$ is an upper bound on the mutual information
   can be seen for example by using concavity of the logarithm \eqref{eq:upper_KL_1} when the family $\{Q_{\xi}, \xi \in \Xi\}$ is chosen to be the same as $\{P_{\theta}, \theta \in \Theta\}$. Using \eqref{upp} in
   \eqref{zex}, we obtain
   \begin{equation*}
     R_{\text{Bayes}}(w, L; \Theta) \geq \frac{1}{2} \sup \left\{t > 0
      : \inf_{a \in \ac} \frac{1}{w (B_t(a, L))} > 2 \int_{\Theta} \int_{\Theta} D(P_{\theta} \|
    P_{\xi}) w(d\xi) w(d \theta)  + \log 4 \right\}.
   \end{equation*}
   If we now specialize to the setting when the probability measures
   $\{P_{\theta}, \theta \in \Theta \}$ are all $n$-fold product
   measures i.e., when each $P_{\theta}$ is of the form
   $\mathfrak{P}_{\theta}^n$ for some class of probabilities
   $\{\mathfrak{P}_{\theta}, \theta \in \Theta\}$, then the inequality
   becomes
   \begin{equation*}
     R_{\text{Bayes}}(w, L; \Theta) \geq \frac{1}{2} \sup \left\{t > 0
      : \inf_{a \in \ac} \frac{1}{w (B_t(a, L))} > 2 n \int_{\Theta}
      \int_{\Theta} D(\mathfrak{P}_{\theta} \| \mathfrak{P}_{\xi})
      w(d\xi) w(d \theta)  + \log 4 \right\}.
   \end{equation*}
    This inequality is precisely \citet[Theorem
    6.1]{zhang2006information}.
\end{remark}

\section{Upper Bounds on $f$-informativity and Examples}
\label{sec:upper_f_informativity}
Application of Theorem~\ref{bwl} requires upper bounds on the $f$-informativity $I_f(w; \Ps)$. This is the subject of this section. We focus on the power divergence $f_{\alpha}$ for $\alpha \geq 1$ which includes the KL divergence and chi-squared divergence as special cases. Recall that in the comment/paragraph below Corollary \ref{cor:lb_01_f} (see also Section \ref{sec:compare} in the appendix), we provided motivation for restricting our attention to such divergences as opposed to e.g., Hellinger distance.

We assume that there is a measure $\mu$ on $\samp$ that dominates $P_{\theta}$ for every $\theta \in \Theta$. None of our results depend on the choice of the dominating measure $\mu$.

When the $f$-informativity is the mutual information,~\cite{haussler1997} have proved useful upper bounds which we briefly review here. Let $P$ and $\{Q_{\xi}, \xi \in \Xi\}$ be probability measures on $\samp$
having densities $p$ and $\{q_{\xi}, \xi \in \Xi\}$ respectively with respect to $\mu$. Let $\nu$ be an arbitrary probability measure on $\Xi$ and $\bar{Q}$ be  the probability measure on $\samp$ having density $\bar{q}=\int_{\Xi} q_{\xi} \nu(\mathrm{d}\xi)$ with respect to $\mu$. \cite{haussler1997} proved the  following inequality
\begin{eqnarray}
  D\left(P||\bar{Q} \right) \leq - \log\left(\int_{\Xi} \exp\left(-D(P||Q_{\xi}) \right) \nu(\mathrm{d}\xi) \right).
  \label{eq:upper_KL_0}
\end{eqnarray}
Now given a class of probability measures $\{P_{\theta}, \theta \in \Theta\}$, applying the above inequality for each $P_{\theta}$ and integrating the resulting inequalities with respect to a probability measure $w$ on $\Theta$,~\citet[Theorem 2]{haussler1997} obtained the following mutual information upper bound:
\begin{equation}\label{eq:upper_KL_1}
I(w, \Ps) \leq  - \int_{\Theta}  \log \left(\int_{\Xi} \exp\left(-D(P_{\theta}||Q_{\xi}) \right) \nu(\mathrm{d}\xi) \right) w(\mathrm{d} \theta).
\end{equation}

In the special case when $\Xi=\{1,\ldots, M\}$ and $\nu$ is the uniform probability measure on $\Xi$, we have $\bar{Q}=\left(Q_1 + \ldots + Q_M\right)/M$ and inequality  \eqref{eq:upper_KL_0} then becomes
$
    D(P||\bar{Q}) \leq -\log \left(\frac{1}{M} \sum_{j=1}^M \exp
      \left(-D(P||Q_j) \right) \right).
$
Because $\sum_{j=1}^M \exp(-D(P\|Q_j)) \geq \exp \left(-\min_{j} D(P\|Q_j) \right)$, we obtain
\begin{equation*}
  D(P \| \bar{Q}) \leq \log M + \min_{1 \leq j \leq M} D(P\|Q_j).
\end{equation*}
Inequality~\eqref{eq:upper_KL_1} can be further simplified to
  \begin{equation}\label{ybp}
   I(w, \Ps) \leq \log M +
    \int_{\Theta} \min_{1 \leq j \leq M} D(P_{\theta}||Q_j)
    w(\mathrm{d}\theta).
  \end{equation}
This inequality can be used to give an upper bound for $f$-informativity in terms of the KL covering numbers. Recall the definition of $M_{KL}(\epsilon, \Theta)$ from~\eqref{redf}. Applying~\eqref{ybp} to any fixed $\epsilon>0$  and
choosing $\{Q_1, \dots, Q_M\}$ to be an $\epsilon^2$-covering, we have
  \begin{equation}\label{yb}
   I(w, \Ps)\leq
    \inf_{\epsilon > 0} \left(\log M_{KL}(\epsilon, \Theta) + \epsilon^2
    \right).
  \end{equation}
  When $w$ is the uniform prior on a finite
  subset of $\Theta$, the above inequality has been proved by~\citet[Page 1571]{Yang:Barron:99}. If $M_{KL}(\epsilon, \Theta)$ is infinity for all $\epsilon$, then \eqref{yb} gives $\infty$ as the upper bound on $I(w, \Ps)$ and thus \eqref{eq:I_bayes_KL} will lead to a trivial lower bound $0$ for $R_{\rm Bayes}$.
   In such a case, one may find a subset $\tilde{\Theta} \subset \Theta$ for which $M_{KL}(\epsilon, \tilde{\Theta})$ is bounded and contains most prior mass. If $\tilde{w}$ denotes the prior $w$ restricted in $\tilde{\Theta}$, then it is easy to see that $R_{\rm Bayes}(w, L; \Theta) \geq  w(\tilde{\Theta}) R_{\rm Bayes}(\tilde{w}, L; \tilde{\Theta})$. Then we can use~\eqref{eq:I_bayes_KL} and~\eqref{yb} to lower bound $R_{\rm Bayes}(\tilde{w}, L; \tilde{\Theta})$ .

In the next theorem, we extend inequalities \eqref{eq:upper_KL_0} and \eqref{eq:upper_KL_1} to power divergences corresponding to $f_{\alpha}$ for $\alpha  \notin [0, 1]$. We also note that in Appendix \ref{effb}, we demonstrate the tightness of  the bound~\eqref{co1} in Theorem \ref{cd} by a simple example.

\begin{theorem}\label{cd}
Fix $\alpha \notin [0, 1]$ and let $f_{\alpha} \in \C$ be as defined in Section~\ref{sec:pre}. Under the setting of inequalities~\eqref{eq:upper_KL_0} and~\eqref{eq:upper_KL_1}, we have
  \begin{equation}\label{co1}
    D_{f_{\alpha}}(P||\bar{Q}) \leq  \left[\int_{\Xi} \left( D_{f_{\alpha}} (P||Q_{\xi}) +1 \right)^{1/(1-\alpha)} \nu(\mathrm{d}\xi) \right]^{1-\alpha} - 1.
  \end{equation}
and
  \begin{equation}\label{eq:upper_f_power}
    I_{f_\alpha}(w, \mathcal{P}) \leq \int_{\Theta} \left[\int_{\Xi} \left( D_{f_{\alpha}} (P_\theta||Q_{\xi}) +1 \right)^{1/(1-\alpha)} \nu(\mathrm{d}\xi) \right]^{1-\alpha} w(\mathrm{d}\theta) - 1.
  \end{equation}
\end{theorem}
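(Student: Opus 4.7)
My plan is to derive \eqref{co1} from Minkowski's integral inequality (used in its standard form when $\alpha<0$ and in its reverse form when $\alpha>1$), and then obtain \eqref{eq:upper_f_power} by the same variational argument that converts \eqref{eq:upper_KL_0} into \eqref{eq:upper_KL_1}.

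For $\alpha \notin [0,1]$, the power divergence admits the explicit expression
\[
D_{f_\alpha}(P\|Q) + 1 = \int p^\alpha q^{1-\alpha} \, \mathrm{d}\mu,
\]
with the understanding that $D_{f_\alpha}(P\|Q)=+\infty$ when $P \not\ll Q$ and $\alpha>1$; in that case \eqref{co1} is either trivially true (because the right-hand side is infinite) or forces $P \ll Q_\vartheta$ for $\nu$-almost every $\vartheta$. Setting $\beta := 1-\alpha$ and recalling $\bar q = \int q_\vartheta \, \nu(\mathrm{d}\vartheta)$, the inequality \eqref{co1} is equivalent to
\[
\int p^\alpha \bar q^{\,\beta} \, \mathrm{d}\mu \;\leq\; \left[\int_\Xi \left(\int p^\alpha q_\vartheta^{\,\beta} \, \mathrm{d}\mu\right)^{\!1/\beta} \nu(\mathrm{d}\vartheta)\right]^{\beta}.
\]

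To prove this, I would introduce the auxiliary nonnegative functions $\tilde f_\vartheta := p^{\alpha/\beta} q_\vartheta$, which satisfy $\tilde f_\vartheta^{\,\beta} = p^\alpha q_\vartheta^{\,\beta}$ and $\int \tilde f_\vartheta \, \nu(\mathrm{d}\vartheta) = p^{\alpha/\beta} \bar q$. Writing $\|g\|_\beta := (\int g^\beta \, \mathrm{d}\mu)^{1/\beta}$ for nonnegative $g$ (meaningful even for $\beta<0$ as long as the inner integrals are finite), the display above becomes
\[
\left\|\int_\Xi \tilde f_\vartheta \, \nu(\mathrm{d}\vartheta)\right\|_\beta^{\,\beta} \;\leq\; \left[\int_\Xi \|\tilde f_\vartheta\|_\beta \, \nu(\mathrm{d}\vartheta)\right]^{\beta}.
\]
When $\alpha<0$, we have $\beta>1$, and this is Minkowski's integral inequality after raising both sides to the positive $\beta$th power. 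When $\alpha>1$, we have $\beta<0$, and the reverse Minkowski integral inequality for strictly positive functions gives $\|\int \tilde f_\vartheta \, \nu(\mathrm{d}\vartheta)\|_\beta \geq \int \|\tilde f_\vartheta\|_\beta \, \nu(\mathrm{d}\vartheta)$; raising to the negative $\beta$th power reverses the direction back to the stated form. Hence \eqref{co1} holds in either case.

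Finally, to obtain \eqref{eq:upper_f_power}, let $\bar Q$ be the mixture on $\samp$ with density $\bar q$ and apply \eqref{co1} with $P$ replaced by $P_\theta$. Integrating the resulting pointwise-in-$\theta$ bound against $w(\mathrm{d}\theta)$ and using the variational upper bound $I_{f_\alpha}(w,\Ps) = \inf_Q \int D_{f_\alpha}(P_\theta\|Q) \, w(\mathrm{d}\theta) \leq \int D_{f_\alpha}(P_\theta\|\bar Q) \, w(\mathrm{d}\theta)$ immediately yields \eqref{eq:upper_f_power}. I expect the main obstacle to be the case $\alpha>1$: the negative exponent $\beta$ forces the use of the reverse Minkowski inequality and one must carefully track how the direction of each inequality flips when raising to negative powers, and also check that $P \ll Q_\vartheta$ for $\nu$-a.e.\ $\vartheta$ whenever the right-hand side is finite (so that the integrands are well defined).
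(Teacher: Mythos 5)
Your proof is correct, and it takes a genuinely different (though mathematically dual) route from the paper's. You normalize by $p^{\alpha/\beta}$ and reduce \eqref{co1} to a statement about $L^\beta(\mu)$-norms of $\Xi$-mixtures, then invoke Minkowski's integral inequality (forward for $\beta>1$, i.e.\ $\alpha<0$; reverse for $\beta<0$, i.e.\ $\alpha>1$), handling the two regimes as separate cases. The paper instead rewrites $D_{f_\alpha}(P\|\bar Q)+1$ as $\E_{X\sim P}\bigl[F\bigl(u(\cdot,X)\bigr)\bigr]$ with $u(\vartheta,x)=(q_\vartheta(x)/p(x))^{1-\alpha}$ and $F(u)=\bigl(\int_\Xi u^r\,d\nu\bigr)^{1/r}$, proves a self-contained lemma that $F$ is concave whenever $r<1$, and applies Jensen with respect to $P$; since $r=1/(1-\alpha)<1$ for all $\alpha\notin[0,1]$, this treats $\alpha<0$ and $\alpha>1$ uniformly with no case split. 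In effect you run Jensen over $\Xi$ for the $L^\beta(\mu)$ functional while the paper runs Jensen over $\mathcal{X}$ for the $L^r(\nu)$ functional; the roles of the two exponents $\beta=1-\alpha$ and $r=1/\beta$ are reciprocal. Your route is more economical if one takes the (reverse) Minkowski integral inequality as known, but the reverse form with negative exponent is less standard and requires the strict-positivity and absolute-continuity caveats you flag; the paper's concavity-lemma-plus-Jensen argument avoids both the case distinction and the reliance on the less-familiar reverse inequality. Your derivation of \eqref{eq:upper_f_power} from \eqref{co1} is identical to the paper's.
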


To prove Theorem \ref{cd}, the following lemma is critical (the proof of this lemma in given in Appendix \ref{sec:proof_concave}).

\begin{lemma}\label{lem:concavity}
Fix $r < 1$. Let $\mu$ be a probability measure on the space $T$ and let $S := \{u: T \rightarrow \R_+: u \in L_{\mu}^r(T)\}$. Then the map $f : S \rightarrow \R$ defined by $f(u) := \left( \int_{T} u(t)^r  \mu(\mathrm{d}t)\right)^{1/r}$ is concave in $u$.
\end{lemma}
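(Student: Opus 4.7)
The plan is to reduce concavity to positive homogeneity together with superadditivity (the reverse Minkowski inequality). Positive homogeneity $f(cu) = c f(u)$ for $c \geq 0$ is immediate from the defining formula. Once we also have
\[
f(u+v) \;\geq\; f(u) + f(v) \qt{for all $u, v \in S$,}
\]
concavity follows in one line: for $\lambda \in [0, 1]$,
\[
f(\lambda u + (1-\lambda) v) \;\geq\; f(\lambda u) + f((1-\lambda) v) \;=\; \lambda f(u) + (1-\lambda) f(v).
\]
So the entire task reduces to establishing reverse Minkowski for the exponent $r < 1$.

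To prove reverse Minkowski I would start from the pointwise decomposition $(u+v)^r = u(u+v)^{r-1} + v(u+v)^{r-1}$ on the set $\{u+v > 0\}$, integrate, and then apply the reverse H\"older inequality to each of the two resulting integrals with conjugate exponents $r$ and $r' := r/(r-1)$. Note that $r' < 0$ when $r \in (0,1)$ and $r' \in (0,1)$ when $r < 0$, so in both regimes H\"older's inequality runs in the reverse direction. Taking $g \in \{u, v\}$ and $h = (u+v)^{r-1}$, and using $\int h^{r'} d\mu = \int (u+v)^r d\mu = f(u+v)^r$, reverse H\"older yields
\[
f(u+v)^r \;\geq\; \bigl(f(u) + f(v)\bigr)\, f(u+v)^{r-1}.
\]
Dividing both sides by $f(u+v)^{r-1}$ (positive whenever $f(u+v) > 0$) gives $f(u+v) \geq f(u) + f(v)$. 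The degenerate cases $f(u+v) = 0$ (which forces $u = v = 0$ $\mu$-a.e.) and $f(u+v) = +\infty$ are either trivial or vacuous.

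The main obstacle is the reverse H\"older inequality itself, which requires some care because one of the conjugate exponents is negative or less than one. The standard derivation applies ordinary H\"older to the identity $g^p = (gh)^p \cdot h^{-p}$ with the (genuine) conjugate exponents $1/p$ and $1/(1-p)$, both of which exceed $1$ when $p \in (0,1)$, and then rearranges; the case $p < 0$ is handled by swapping the roles of $g$ and $h$. A secondary technicality is that $(u+v)^{r-1}$ is only defined on $\{u+v > 0\}$ when $r - 1 < 0$, and for $r < 0$ the finiteness of $f(u)$ implicitly requires $u > 0$ $\mu$-a.e.; both points are handled by restricting the integrals to the appropriate supports, which does not affect the final concavity statement.
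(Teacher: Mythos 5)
Your proof is correct, but it takes a genuinely different route from the one in the paper. The paper restricts $f$ to a line, writing $h(s) = \varphi\bigl(\int_T \phi(u+sv)\,\mu(\mathrm{d}t)\bigr)$ with $\phi(t)=t^r$ and $\varphi(t)=t^{1/r}$, differentiates twice under the integral sign, and shows $h''(0)\le 0$ by an application of the Cauchy--Schwarz inequality with a carefully chosen pair of functions $a(t)=(f(u)/u(t))^{-r/2}$ and $b(t)=v(t)(f(u)/u(t))^{1-r/2}$. You instead exploit the fact that $f$ is positively homogeneous of degree one, which reduces concavity to superadditivity $f(u+v)\ge f(u)+f(v)$, i.e.\ to the reverse Minkowski inequality for exponent $r<1$; you then derive reverse Minkowski from the pointwise decomposition $(u+v)^r = u(u+v)^{r-1}+v(u+v)^{r-1}$ and the reverse H\"older inequality with conjugate pair $(r, r/(r-1))$. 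Both are standard strategies. Yours has the virtue of being an entirely global argument: it avoids the repeated differentiation under the integral sign that the paper's computation tacitly relies on (which strictly speaking needs dominated-convergence justification for integrands like $u^{r-2}v^2$), and it makes the structural reason for concavity --- homogeneity plus superadditivity --- explicit. The cost is that it invokes reverse H\"older, itself a nontrivial classical inequality, whereas the paper only needs Cauchy--Schwarz. The technicalities you flag (restricting to $\{u+v>0\}$, and noting that for $r<0$ membership $u\in L^r_\mu$ already forces $u>0$ $\mu$-a.e., so $f(u+v)>0$) are genuine and are dispatched correctly by your remarks.
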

Note that the discrete version of Lemma \ref{lem:concavity}  states that $f(u)=\left(  \sum_{i=1}^M u_i^r/M\right)^{1/r}$ is a concave function of $u \in \mathbb{R}_{+}^M$ when $r<1$.

In fact, since we will apply this lemma to prove Theorem \ref{cd} with $r=\frac{1}{1-\alpha}$,  the condition $r<1$ in Lemma \ref{lem:concavity} translates into $\alpha \not \in [0,1]$ in Theorem \ref{cd}.  We are now ready to prove Theorem~\ref{cd}.

\begin{proof}[Proof of Theorem \ref{cd}]

  By the identity that $D_{f_\alpha}(P||Q)=D_{f_{1-\alpha}}(Q||P)$, we have
  \begin{align*}
    D_{f_{\alpha}}(P||\bar{Q})  = D_{f_{1-\alpha}}(\bar{Q}||P)  &=\int_{\mathcal{X}} p \left( \int_{\Xi}\frac{q_{\xi}}{p} \nu(\mathrm{d}\xi) \mathrm{d} \mu \right)^{1-\alpha}    -1 \\
&=\int_{\mathcal{X}} p \left( \int_{\Xi}\left[\left(\frac{q_{\xi}}{p} \right)^{1-\alpha} \right]^{1/(1-\alpha)}\nu(\mathrm{d}\xi) \mathrm{d} \mu \right)^{1-\alpha} -1
  \end{align*}
  Let $u(\xi, x) = \left(\frac{q_{\xi}}{p} \right)^{1-\alpha}$. Since $\frac{1}{1-\alpha}<1$ when $\alpha \not \in [0,1]$, Lemma \ref{lem:concavity} implies that
  $u(\xi, x) \mapsto \left( \int_{\Xi} u(\xi, x)^{1/(1-\alpha)} \nu(\mathrm{d}\xi)\right)^{1-\alpha} $ is concave in $u$. Applying Jensen's inequality,
  \begin{align*}
     D_{f_{\alpha}}(P||\bar{Q})   & \leq  \left( \int_{\Xi}\left[\int_{\mathcal{X}} p \left(\frac{q_{\xi}}{p} \right)^{1-\alpha} \mathrm{d} \mu \right]^{1/(1-\alpha)}\nu(\mathrm{d}\xi)  \right)^{1-\alpha} -1 \\
     & =  \left( \int_{\Xi}\left[D_{f_{1-\alpha}}(Q_{\xi} || P) \right]^{1/(1-\alpha)}\nu(\mathrm{d}\xi)  \right)^{1-\alpha} -1.
  \end{align*}
This completes the proof of~\eqref{co1} because $D_{f_{1-\alpha}}(Q_{\xi} || P)= D_{f_{\alpha}}(P || Q_{\xi}) $. The proof of~\eqref{eq:upper_f_power} follows by applying~\eqref{co1} for $P = P_{\theta}$ and then integrating the resulting bound with respect to $w(\mathrm{d}\theta)$.
\end{proof}


%
%


For $\alpha > 1$, one can deduce an upper bound analogous to~\eqref{yb} for the $f_{\alpha}$-informativity which is described in the next corollary. Recall the notion of the covering numbers $M_{\alpha}(\epsilon, \Theta)$ from Section \ref{sec:pre}.

\begin{corollary}\label{cor:chi_dist_upper}
  For every $\alpha > 1$, we have
  \begin{equation}\label{eq:chi_dist_upper}
    I_{f_\alpha}(w, \mathcal{P})  \leq \inf_{\epsilon> 0} (1 + \epsilon^2) M_{\alpha}(\epsilon, \Theta)^{\alpha - 1} - 1.
  \end{equation}
\end{corollary}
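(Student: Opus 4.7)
The plan is to specialize the general inequality \eqref{eq:upper_f_power} from Theorem \ref{cd} to a finite, uniform auxiliary measure supported on an $\epsilon^2$-cover and then read off the claimed bound. Fix $\epsilon>0$ and set $M:=M_\alpha(\epsilon,\Theta)$. By definition of the covering number, there exist probability measures $Q_1,\dots,Q_M$ with
\[
\sup_{\theta\in\Theta}\min_{1\le j\le M} D_{f_\alpha}(P_\theta\|Q_j)\le \epsilon^2.
\]
Take $\Xi=\{1,\dots,M\}$ and $\nu$ the uniform probability measure on $\Xi$, so that $\bar Q=(Q_1+\cdots+Q_M)/M$. Plugging this choice into \eqref{eq:upper_f_power} yields
\[
I_{f_\alpha}(w,\mathcal{P})\le \int_\Theta\!\left[\frac{1}{M}\sum_{j=1}^M\bigl(D_{f_\alpha}(P_\theta\|Q_j)+1\bigr)^{1/(1-\alpha)}\right]^{1-\alpha}\!w(\mathrm{d}\theta)-1.
\]

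The main (and only) delicate point is careful sign tracking of the two exponents $1/(1-\alpha)$ and $1-\alpha$, both of which are \emph{negative} because $\alpha>1$. Since $x\mapsto x^{1/(1-\alpha)}$ is decreasing on $(0,\infty)$, for the index $j^\star(\theta)$ attaining $\min_j D_{f_\alpha}(P_\theta\|Q_j)\le\epsilon^2$ we have
\[
\bigl(D_{f_\alpha}(P_\theta\|Q_{j^\star})+1\bigr)^{1/(1-\alpha)}\ge (1+\epsilon^2)^{1/(1-\alpha)},
\]
and dropping the other nonnegative terms gives
\[
\frac{1}{M}\sum_{j=1}^M\bigl(D_{f_\alpha}(P_\theta\|Q_j)+1\bigr)^{1/(1-\alpha)}\ge \frac{1}{M}(1+\epsilon^2)^{1/(1-\alpha)}.
\]

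Next I raise both sides to the power $1-\alpha$; because $1-\alpha<0$, the map $y\mapsto y^{1-\alpha}$ is decreasing on $(0,\infty)$, so the inequality reverses:
\[
\left[\frac{1}{M}\sum_{j=1}^M\bigl(D_{f_\alpha}(P_\theta\|Q_j)+1\bigr)^{1/(1-\alpha)}\right]^{1-\alpha}\le M^{\alpha-1}(1+\epsilon^2)^{(1-\alpha)/(1-\alpha)}=(1+\epsilon^2)\,M^{\alpha-1}.
\]
This bound is uniform in $\theta$, so integrating against $w$ and subtracting $1$ gives
\[
I_{f_\alpha}(w,\mathcal{P})\le (1+\epsilon^2)\,M_\alpha(\epsilon,\Theta)^{\alpha-1}-1.
\]
Finally, since $\epsilon>0$ was arbitrary, take the infimum over $\epsilon>0$ to obtain \eqref{eq:chi_dist_upper}. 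No further estimation is needed; the argument is essentially a one-line specialization of Theorem \ref{cd} once the two sign reversals are handled correctly.
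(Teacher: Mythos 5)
Your proof is correct and follows essentially the same route as the paper: both specialize Theorem \ref{cd} with $\Xi=\{1,\dots,M\}$ and the uniform auxiliary measure, keep only the term corresponding to the nearest cover element (using that $1/(1-\alpha)<0$ makes $x\mapsto x^{1/(1-\alpha)}$ decreasing), reverse the inequality when raising to the negative power $1-\alpha$, and then integrate. The only cosmetic difference is that you invoke the already-integrated bound \eqref{eq:upper_f_power}, whereas the paper works with the per-$\theta$ bound \eqref{co1} and integrates at the very end.
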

In particular, when $D_{f_{\alpha}}$ is the chi-square divergence, Corollary~\ref{cor:chi_dist_upper} implies
\begin{equation}\label{eq:chi_dist_upper_1}
I_{\chi^2}(w, \mathcal{P}) \leq \inf_{\epsilon> 0} (1 + \epsilon^2) M_{\chi^2}(\epsilon, \Theta) - 1.
\end{equation}
Note that Corollary~\ref{cor:chi_dist_upper} gives trivial bound when $M_{\alpha}(\epsilon, \Theta)$ equals $\infty$ for all $\epsilon > 0$. This can be handled in a way similar to that outlined in the discussion after \eqref{yb}.

\begin{proof}[Proof of Corollary \ref{cor:chi_dist_upper}]

Let $Q_1, \dots, Q_M$ be probability measures on $\samp$ and fix $\theta \in \Theta$. Inequality~\eqref{co1} applied to $P = P_{\theta}$, $\Xi := \{1, \dots, M\}$ and the uniform probability measure on $\Xi$ as $\nu$ gives
\begin{equation*}
  D_{f_{\alpha}}(P_{\theta} \| \bar{Q}) \leq  M^{\alpha-1} \left[ \sum_{j=1}^M  (1+D_{f_{\alpha}}(P_{\theta}\| Q_j))^{1/(1-\alpha)} \right]^{1-\alpha} - 1
\end{equation*}
We now use (note that $\alpha > 1$)
\begin{eqnarray*}
  \sum_{j=1}^M  (1+D_{f_{\alpha}}(P_{\theta}\| Q_j))^{1/(1-\alpha)}  \geq  & \max_{1 \leq j \leq M}  (1+D_{f_{\alpha}}(P_{\theta}\| Q_j))^{1/(1-\alpha)} \\
                          = & \left(1 + \min_{1 \leq j \leq M} D_{f_{\alpha}}(P_{\theta}\|Q_j) \right)^{1/(1-\alpha)}.
\end{eqnarray*}
This gives $$D_{f_{\alpha}}(P_{\theta}\|\bar{Q}) \leq M^{\alpha - 1} \left(1 + \min_{1 \leq j \leq M} D_{f_{\alpha}}(P_{\theta}\|Q_j) \right) - 1.$$

We now fix $\epsilon > 0$ and apply the above with $\{Q_1, \dots, Q_M\}$ taken to be an $\epsilon^2$-cover of $\Theta$ under the $f_\alpha$-divergence. We then obtain
\begin{equation*}
  D_{f_{\alpha}}(P_{\theta}\|\bar{Q}) \leq \inf_{\epsilon>0} (1 + \epsilon^2) M_{\alpha}(\epsilon, \Theta)^{\alpha - 1} - 1.
\end{equation*}
The proof is complete by integrating the above inequality with respect to $w(\mathrm{d}\theta)$.

\end{proof}

We now turn to applications of the Bayes risk lower bounds in Corollary \ref{cor:bwl} and the informativity upper bounds in this section. We present a toy example here and postpone more complicated examples (e.g., generalized linear model, spiked covariance model, Gaussian model with general prior and loss) to Appendix \ref{sec:Bayes_Example}.

\begin{example}[Gaussian model with uniform priors on large balls]\label{tutu}
Fix $d \geq 1$. Suppose $\Theta = \ac \subseteq \R^d$ and let $L(\theta, a) := \|\theta - a\|_2^2$. For each $\theta \in \R^d$, let $P_{\theta}$ denote the Gaussian distribution with mean $\theta$ and covariance matrix $\sigma^2 I_{d\times d}$  ($\sigma^2 > 0$ is a constant). Let $w$ be the uniform distribution on the closed ball of  radius $\Gamma$ centered at the origin. Let $\Gamma \geq \sigma \sqrt{d}$. We will show below how to obtain  the tight Bayes risk lower bound using Corollary \ref{cor:bwl} along with the $f$-informativity upper bound in Corollary~\ref{cor:chi_dist_upper}.


We can assume that $\Theta$ (and $\ac$) is the closed ball of radius $\Gamma$ centered at the origin as $w$ puts zero probability outside this ball. We use the inequality~\eqref{eq:I_bayes_chi} induced by the chi-squared divergence. To establish the lower bound, we need to upper bound $\sup_{a \in \ac} w(B_t(a, L))$ and the chi-squared informativity. The former can be easily controlled because $\sup_{a \in \ac} w(B_t(a, L)) \leq \left(\sqrt{t}/\Gamma \right)^d.$ For the latter, we use \eqref{eq:chi_dist_upper_1}, which requires an upper bound on $M_{\chi^2}(\epsilon, \Theta)$. Note that $\chi^2(P_{\theta} \| P_{\theta'}) = \exp \left(\|\theta-\theta'\|_2/\sigma^2 \right) - 1$ for $\theta, \theta' \in \Theta$. As a consequence, $\chi^2(P_{\theta} \| P_{\theta'}) \leq \epsilon^2$ if and only if $\|\theta - \theta\|_2 \leq \epsilon' := \sigma\sqrt{\log (1 + \epsilon^2)}$. Therefore, by a standard volumetric argument, we have
  \begin{eqnarray*}
    M_{\chi^2}(\epsilon, \Theta) \leq \left(\frac{\Gamma+\epsilon'/2}{\epsilon'/2}\right)^d \leq \left(\frac{3\Gamma}{\epsilon'}\right)^d = \left( \frac{3 \Gamma }{ \sigma \sqrt{\log(1+\epsilon^2)}}\right)^d
  \end{eqnarray*}
provided $\epsilon' \leq \Gamma$. In particular, if we take $\epsilon := \sqrt{e^d - 1}$, then $\epsilon' = \sigma \sqrt{d} \leq \Gamma$, we will obtain $M_{\chi^2}(\epsilon, \Theta) \leq (3 \Gamma/(\sigma \sqrt{d}))^d$. Inequality~\eqref{eq:chi_dist_upper_1} then gives
$
  I_{\chi^2}(w, \Ps) \leq \left( \frac{3 e \Gamma }{ \sigma \sqrt{d}}\right)^d-1.
$
Let $\Iu$ be the right hand side. If we choose $t = c d \sigma^2$ for a sufficiently small constant $c > 0$, then we have $ \sup_{a \in \ac} w(B_t(a, L)) < \frac{1}{4} (1 + \Iu)^{-1}$. Inequality~\eqref{eq:I_bayes_chi} then gives
\begin{equation}\label{kolo}
R_{\rm Bayes}(w, L; \Theta) \geq c d \sigma^2.
\end{equation}
This lower bound is tight due to the trivial upper bound $R_{\rm Bayes}(w, L; \Theta) \leq d \min(\sigma^2, \Gamma^2)$ since $R_{\rm Bayes}(w, L; \Theta)$  is smaller than the risk of the constant estimator 0 as well as the trivial estimator of the observation itself.

This example allows us to compare the bound given by Theorem \ref{bwl} for different $f \in \C$.  We argue below that using KL divergence and applying \eqref{eq:I_bayes_KL}  along with inequality~\eqref{yb} for controlling the mutual information will not yield a tight lower bound for this example.
In other words, the same strategy that works for $f(x) = x^2 - 1$ does not work for $f(x) = x \log x$. To see this, notice that $D(P_{\theta}\|P_{\theta'}) = \|\theta - \theta'\|^2/\sigma^2$ for $\theta, \theta' \in \Theta$. As a result, $D(P_{\theta} \| P_{\theta'}) \leq \epsilon^2$ if and only if $\|\theta - \theta'\| \leq \sqrt{2} \epsilon \sigma$. The same volumetric argument again gives
$
  M_{KL}(\epsilon, \Theta) \leq \left(\frac{3 \Gamma}{\sqrt{2} \epsilon \sigma} \right)^d
$ provided  $\sqrt{2} \epsilon \sigma \leq \Gamma$.
The bound~\eqref{yb} implies that the mutual information $I(w, \Ps)$ is bounded by
\begin{equation*}
  I(w, \Ps) \leq \inf_{0 < \epsilon \leq \Gamma/(\sqrt{2} \epsilon \sigma)} \left(d \log \left(\frac{3  \Gamma}{\sqrt{2} \epsilon \sigma}  \right) + \epsilon^2 \right) = d \log \left(\frac{3 \Gamma}{\sigma \sqrt{d}} \right) + \frac{d}{2}.
\end{equation*}
Let $\Iu$ be the right hand side above. The maximum $t > 0$ for which $(\sqrt{t}/\Gamma)^d < \frac{1}{4} \exp \left(-2\Iu \right)$ is on the order of $d^2 \sigma^4/\Gamma^2$. This means that inequality~\eqref{eq:I_bayes_KL} implies a weaker lower bound $\Omega(d^2 \sigma^4/\Gamma^2)$, which is suboptimal when $d \sigma^2$ is small or when $\Gamma$ is large. This is in contrast with the optimal bound~\eqref{kolo}.
\end{example}
In the above example, a direct application of Theorem~\ref{bwl} with $f(x) = x \log x$ does not produce a tight lower bound. This is mainly because, when the prior is over a large parameter space (e.g., a ball of a constant radius), the upper bound of mutual information over the entire parameter space $\Theta$ in \eqref{yb} could be too loose.
This can be corrected by partitioning the parameter space $\Theta$ into small hypercubes, and applying our bounds for the prior restricted to each hypercube separately  so that the mutual information inside the partition can be appropriately upper bounded using \eqref{yb}. This is another illustration of the idea described in Remark \ref{rem:Bayes_decomp}. We first describe this method in a more general setting in the following corollary and then apply it to the setting of Example \ref{tutu}. We use the  following notation. For measurable subsets $S$ of a Euclidean space, $\text{Vol}(S)$ denotes the volume (Lebesgue measure) of $S$.

\begin{corollary}\label{nd}
Let $\Theta=\mathcal{A} \subseteq \mathbb{R}^d$. Suppose that the prior $w$ has a Lebesgue density $f_w$ that is positive over $\Theta$. For each $\theta \in \Theta$ and $\delta>0$, let
\begin{equation*}
  r_{\delta}(\theta) := \sup \left\{ \frac{f_w(\theta_1)}{f_w(\theta_2)} : \theta_i \in \Theta \text{ and } \|\theta_i - \theta\|_2 \leq \sqrt{d} \delta \text{ for } i = 1, 2 \right\}.
\end{equation*}
Suppose also the existence of $A > 0$ such that $D(P_{\theta_1} \| P_{\theta_2}) \leq A  \|\theta_1 - \theta_2\|_2^2$  for all $\theta_1, \theta_2 \in\Theta$ and the existence of $V > 0$ (which may depend on $d$) and $p > 0$ such that $\sup_{a \in \ac} \mathrm{Vol} (B_t(a, L)) \leq V t^{d/p}$ for every $t > 0$. Then
     \begin{equation}
           R_{\rm Bayes}(w, L; \Theta)  \geq \frac{1}{2} \sup_{0 < \delta \leq A^{-1/2}} \left[e^{-2p}  \delta^p  (8V)^{-p/d} \int_{\Theta} \left(\frac{1}{r_{\delta}(\theta)}\right)^{p/d}  w(\mathrm{d}\theta) \right].
           \label{eq:R_bayes_fano}
     \end{equation}
     \label{cor:KL_bayes_lower}
\end{corollary}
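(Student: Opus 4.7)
\textbf{Proof proposal for Corollary \ref{nd}.}

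The plan is to exploit the partitioning device introduced in Remark \ref{rem:Bayes_decomp}: we chop $\Theta$ into small cubes so that, on each cube, the KL-informativity is small (because all laws $P_\theta$ are close) and the small-ball probability is controlled (because $f_w$ is essentially flat), and then aggregate via \eqref{eq:prior_decomp} and apply Corollary \ref{cor:bwl}(i) cube-by-cube. Specifically, tile $\R^d$ by closed cubes $\{C_k\}$ of side length $\delta \leq A^{-1/2}$, and set $\Theta_k := \Theta \cap C_k$; any two points in $\Theta_k$ lie within Euclidean distance $\sqrt{d}\,\delta$, so $\Theta_k \subseteq B(\theta,\sqrt{d}\,\delta)$ for \emph{every} $\theta \in \Theta_k$. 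For each nonempty $\Theta_k$ pick $\theta_k^*\in\Theta_k$ minimizing $r_\delta(\cdot)$ on $\Theta_k$ (or $\epsilon$-nearly so); by the assumed Lipschitz-type KL bound, $D(P_\theta\|P_{\theta_k^*})\leq A d\delta^2$ for all $\theta\in\Theta_k$, so inequality \eqref{ybp} with $M=1$ and $Q_1=P_{\theta_k^*}$ gives $I(w_k,\Ps)\leq A d\delta^2$, where $w_k$ denotes $w$ restricted and renormalized to $\Theta_k$.

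Next, for each $a\in\ac$ the small-ball probability satisfies
\[
w_k(B_t(a,L)) \;=\; \frac{\int_{B_t(a,L)\cap\Theta_k} f_w\,d\theta}{\int_{\Theta_k} f_w\,d\theta} \;\leq\; \frac{\sup_{\Theta_k}f_w}{\inf_{\Theta_k}f_w}\cdot\frac{\mathrm{Vol}(B_t(a,L))}{\mathrm{Vol}(\Theta_k)}\;\leq\; \frac{r_\delta(\theta_k^*)\,V t^{d/p}}{\mathrm{Vol}(\Theta_k)},
\]
using that $\Theta_k\subseteq B(\theta_k^*,\sqrt{d}\,\delta)$ to bound the oscillation of $f_w$ by $r_\delta(\theta_k^*)$, and the polynomial volume assumption. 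Restricting attention to cubes $C_k$ that are fully contained in $\Theta$ (so $\mathrm{Vol}(\Theta_k)=\delta^d$), inequality \eqref{eq:I_bayes_KL} now yields
\[
R_{\rm Bayes}(w_k,L;\Theta_k) \;\geq\; \frac{1}{2}\left(\frac{e^{-2Ad\delta^2}\,\delta^d}{4V\,r_\delta(\theta_k^*)}\right)^{p/d} \;\geq\; \frac{e^{-2p}\,\delta^p}{2(4V)^{p/d}}\,r_\delta(\theta_k^*)^{-p/d},
\]
where the second inequality uses $A\delta^2\leq 1$ to reduce $e^{-2Ap\delta^2}$ to $e^{-2p}$.

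To aggregate, observe that because $\theta_k^*$ was chosen to minimize $r_\delta$ on $\Theta_k$, one has $r_\delta(\theta_k^*)^{-p/d}\geq r_\delta(\theta)^{-p/d}$ for every $\theta\in\Theta_k$, so that $w(\Theta_k)\,r_\delta(\theta_k^*)^{-p/d}\geq \int_{\Theta_k} r_\delta(\theta)^{-p/d}\,w(d\theta)$. Substituting the per-cube bound into \eqref{eq:prior_decomp} and summing gives
\[
R_{\rm Bayes}(w,L;\Theta) \;\geq\; \frac{e^{-2p}\,\delta^p}{2(4V)^{p/d}} \int_{\Theta^\sharp} r_\delta(\theta)^{-p/d}\,w(d\theta),
\]
where $\Theta^\sharp$ is the union of the interior cubes. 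Finally, take the supremum over $0<\delta\leq A^{-1/2}$. The main technical nuisance I expect is dealing with the boundary cubes (those $C_k$ only partially inside $\Theta$), where $\mathrm{Vol}(\Theta_k)<\delta^d$ and the per-cube bound degrades. The cleanest fix is to average the above estimate over a random translation of the grid (or, equivalently, to enlarge the ambient grid and absorb the boundary discrepancy into the multiplicative constant), which is precisely what converts $4V$ into $8V$ and extends the integral from $\Theta^\sharp$ to all of $\Theta$. Measurability of the selection $k\mapsto\theta_k^*$ is a minor detail that can be handled by a standard measurable selection argument or by replacing $r_\delta(\theta_k^*)$ with its essential infimum on $\Theta_k$.
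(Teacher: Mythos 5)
Your proposal follows essentially the same route as the paper's proof: partition $\Theta$ into hypercubes of side $\delta \leq A^{-1/2}$, bound the per-cube mutual information by $A d\delta^2$ (using \eqref{ybp} with $M=1$ or, as the paper does, the double integral of pairwise KL divergences over the cube --- equivalent choices), bound the per-cube small-ball probability via the density-ratio quantity $r_\delta$, apply inequality \eqref{eq:I_bayes_KL} on each cube, and aggregate via \eqref{eq:prior_decomp}.

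Two remarks on your write-up. First, you misattribute the constant $8V$. Your per-cube calculation, which passes to the supremum over admissible $t$, cleanly yields $(4V)^{-p/d}$ and is in fact slightly \emph{sharper} than what the paper records. The paper obtains $8V$ by plugging in the explicit $t = e^{-2pA\delta^2}\delta^p(8Vr_\delta(\tilde\theta))^{-p/d}$ and verifying that the \emph{strict} inequality $\sup_a \pi(B_t(a,L)) < \tfrac14 e^{-2\Iu}$ holds; with $4V$ in place of $8V$ one gets equality, and the factor $2$ is nothing more than this slack. It has nothing to do with boundary cubes. Since $(8V)^{-p/d} < (4V)^{-p/d}$, your bound still implies the stated one, so this misattribution is harmless.

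Second, the boundary issue you flag at the end is a legitimate observation, but you should be aware that the paper's own proof silently commits the same sin: its small-ball estimate uses $w(S) \geq f_w^{\min}\delta^d$, which holds only when the hypercube $S$ is fully contained in $\Theta$ (so that $\mathrm{Vol}(S) = \delta^d$). The paper does not address the case $\mathrm{Vol}(\Theta\cap C_k) < \delta^d$ at all. Your two suggested repairs (random grid shift, or absorbing the discrepancy into the constant) are plausible but left unexecuted, so on this point your sketch and the paper's proof are equally incomplete; just do not conflate the boundary repair with the source of the factor $8$.
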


The proof of Corollary is quite technically involved and thus is deferred to Appendix \ref{sec:cor_nd}.

We demonstrate below that this corollary yields the correct rate in Example~\ref{tutu}. More examples (e.g., estimation problem in generalized linear model, spiked covariance model, and Gaussian model with a general loss) are given in Appendix \ref{sec:Bayes_Example}.

\begin{example}[Gaussian model with uniform priors on large balls (continued)] \label{tutug}
  Consider the same setting as in Example~\ref{tutu}. Because $D(P_{\theta}\|P_{\theta'}) = \|\theta - \theta'\|_2^2/(2 \sigma^2)$, we can take $A = (2 \sigma^2)^{-1}$ in Corollary~\ref{nd}. Moreover, because $L(\theta, a) = \|\theta - a\|_2^2$, it is easy to see that $\sup_{a \in \ac} \mathrm{Vol}(B_t(a, L)) \leq t^{d/2} \mathrm{Vol} (B)$ which means that we can take $p = 2$ and $V = \mathrm{Vol}(B)$ in Corollary~\ref{nd} where $B$ is the unit ball in $\R^d$. Finally, because $w$ is the uniform prior, we have  $r_{\delta}(\theta) = 1$ for all $\theta \in \Theta$. Corollary~\ref{nd} therefore gives
  \begin{equation*}
    R_{\rm Bayes}(w, L; \Theta) \geq \frac{1}{2} \sup_{0 < \delta \leq \sqrt{2} \sigma} \left(e^{-4} 8^{-2/d} \delta^2 \mathrm{Vol}(B)^{-2/d} \right).
  \end{equation*}
This matches the tight lower bound~\eqref{kolo} by noting that $\mathrm{Vol}(B)^{1/d} \asymp d^{-1/2}$.
\end{example}

\section{Smoothed Analysis for Spherical Gaussian Mixture Models with Uniform Weights}
\label{sec:smoothed-analysis}

Smoothed analysis is a useful technique for analyzing algorithms that fail in the worst case but succeed with high probability in the average case. For parameter estimation problems, smoothed analysis assumes that the  parameter to be estimated is randomly perturbed by a small noise, and the data is generated with respect to the perturbed parameter as well. Under this setting, if the set of ``bad'' parameters that fail the estimator has zero measure, then the estimator will succeed almost surely after the perturbation. Smoothed analysis has been successfully applied to analyze linear programming~\citep{blum2002smoothed,dunagan2002smoothed,hsu2013learning,spielman2003smoothed}, integer programming~\citep{roglin2007smoothed}, binary search trees~\citep{manthey2007smoothed}, and other combinatorial problems~\citep{banderier2003smoothed}. See the paper by~\citet{spielman2003smoothed} for a survey of existing works.

In this section, we use smoothed analysis to study an important problem in statistical estimation: learning mixture of spherical Gaussians. The problem of computing the maximum log-likelihood estimator is NP-hard~\citep{arora2005learning}. However, if the true parameters are perturbed by a random noise, then we demonstrate that a variant of the polynomial-time algorithm proposed by~\citet{hsu2013learning} succeeds in estimating the Gaussian means. We present an upper bound on the algorithm's mean-squared error using smoothed analysis, which achieves a better rate than the original algorithm of~\citet{hsu2013learning}.
Furthermore, we apply the Bayes risk lower bound developed in this paper to show that, the mean squared-error achieved by this algorithm is unimprovable, even under smoothed analysis. To the best of our knowledge, the lower bound cannot be established by traditional information-theoretic techniques for lower bounding minimax risks.

\subsection{Learning Mixture of Gaussians}
\label{sec:learn-gmm}

We study estimating the parameter of a Gaussian mixture model (GMM). The parameter of a GMM
is a $d$-by-$k$ matrix $\theta \defeq (\theta_1,\dots,\theta_k)$. Each $\theta_i\in \R^d$
represents the mean of the $i$-th mixture component. We assume that the number of components $k$ is much less than the dimensionality $d$.  Suppose that $n$ i.i.d.~instances $\{x_i\}_{i=1}^n$  are sampled from the GMM with each $x_i\in \R^d$.
Equivalently, it is generated by the following procedure: First, an integer $z_i$ is uniformly sampled from $\{1,\dots,k\}$. This integer is called the \emph{membership} of the $i$-th instance\footnote{For simplicity, we focus on the case when all mixture components have equal weights, but our argument can be easily generalized to the case of non-uniform weights.}. Then, the vector $x_i$
is drawn from the spherical Gaussian distribution $\normal(\theta_{z_i}; I_{d\times d})$. The goal is to estimate the parameters $\theta$.

Information theoretically, the GMM model is learnable if the Gaussian means are well seperated. Let $D$ represent the minimum distance between two distinct component means. \citet{vempala2004spectral} show that, as long as $D > C$ for $C$ being a sufficiently large constant, the estimation error on $\theta$ scales as $\order(n^{-1/2})$. However, the algorithm achieving this rate has $\order(k^k)$ time complexity. When the mutual distance $D$ is large enough, there are ${\rm poly}(n,d,k)$-time algorithms to estimate the model parameters. In particular, \citet{dasgupta1999learning} presents an algorithm for $D=\Omega(\sqrt{d})$. \citet{arora2005learning} and \citet{dasgupta2000two} present algorithms for $D=\Omega(d^{1/4})$. \citet{vempala2004spectral} reduce this distance lower bound to $\Omega(k^{1/4})$. However, designing ${\rm poly}(n,d,k)$-time algorithm for $\widetilde\Omega(1)$-separated GMMs is a long-standing open problem.

\citet{hsu2013learning} proposed a method that does not need the well-separation condition. The only assumption is that $\{\theta_1,\dots,\theta_k\}$ are linearly independent. Let $\sigma_{\min} > 0$ be the smallest singular value of the matrix $\theta$. Their algorithm runs in ${\rm poly}(n,d,k)$-time and achieves the following bound for estimator $\widehat \theta$:
\begin{align}\label{eqn:spectral-method-bound}
	\norms{\widehat \theta - \theta}_F^2 = \order\left( \frac{{\rm poly}(d,k,1/{\sigma_{\min}}) \log(1/\delta)}{n} \right)\quad\mbox{with probability at least $1 - \delta$}.
\end{align}
Here, $\|\cdot\|_F$ denotes the matrix Frobenius norm.
In general, we cannot guarantee that $\sigma_{\min} > 0$. However, if we add a small perturbation on the true component means, then the assumption is satisfied almost surely.
More precisely, we assume that there is a matrix $\theta^*\in \R^{d\times k}$ so that each entry of matrix $\theta$ is sampled from $\theta_{ij} \sim \normal(\theta^*_{ij}; \rho^2)$. The following lemma lower bounds the smallest singular value.
\begin{lemma}[\citet{ge2015learning}, Lemma G.16]\label{lemma:matrix-perturb}
Let $\theta^*\in \R^{d\times k}$ and suppose that $d\geq 3k$. If all entries of $\theta^*$ are independently perturbed by $N(0, \rho^2)$ to yield matrix $\theta$. For any $\epsilon > 0$, with probability at least $1 - c_1(c_2 \epsilon)^d$, the smallest singular value of matrix $\theta$ is lower bounded by:
\[
	\sigma_{\min} > \epsilon \rho \sqrt{d}.
\]
Here, $c_1,c_2$ are universal constants.
\end{lemma}
\noindent We choose $\epsilon, \rho \sim n^{-c}$ for a sufficiently small $c > 0$, then the perturbation diminishes to zero, and if $\sigma_{\min} > \epsilon \rho \sqrt{d}$ holds, then
the right-hand side of equation~\eqref{eqn:spectral-method-bound} converges to zero at a polynomial rate as $n\to\infty$. Lemma~\ref{lemma:matrix-perturb} implies that the probability of this event is at least $1 - \order(n^{-cd})$. Thus,
with high probability, the estimator $\widehat \theta$ is consistent  under the smoothed analysis.

The convergence rate of the estimator $\widehat \theta$ can be improved if we add a mild assumption that $D = \widetilde\order(\sqrt{\log (n k)})$. Although the main focus of the paper is on lower bounds, the upper bound result on the estimation of $\widehat \theta$ in learning mixture of Gaussians is of its independent interest. To obtain the upper bound on $\E[\norms{\thetahat - \theta}_{F}^2]$, we first establish the following lemma:
\begin{lemma}\label{lemma:distance}
Let the mutual distance satisfy $D \geq c_1\sqrt{\log(n k/\delta)}\geq  3$ for a sufficiently large constant $c_1$. With probability at least $1-\delta$, the inequality $\ltwos{x_i - \theta_{j}} - \ltwos{x_i - \theta_{z_i}} \geq c_2 (d \log(n k/\delta))^{-1/2}$ holds for a  constant $c_2 > 0$, for any $i\in [n]$ and any $j\in [k]\backslash\{z_i\}$.
\end{lemma}

The proof of this technical lemma is relegated to Appendix  \ref{sec:distance}. Lemma~\ref{lemma:distance} shows that with high probability, the distance of a random sample to its true component mean is significantly less than the distance to any other means. Let $\thetahat_j$ represent the $j$-th column of $\widehat \theta$. When the sample size $n$ is sufficiently large, the method of \citet{hsu2013learning} guarantees that $\ltwos{\thetahat_j - \theta_j} < o((d \log(n k/\delta))^{-1/2})$ for any $j\in[k]$. Thus, Lemma~\ref{lemma:distance} implies that the distance of $x_i$ to $\thetahat_{z_i}$ is smaller than the distance to any other estimated centers. As a consequence, we may recover the membership of instances by computing the center that is the closest to them.
\[
	\zhat_i = \arg\min_{j\in [k]} \ltwos{x_i - \thetahat_j}.
\]
According to Lemma~\ref{lemma:distance}, with high probability we have $\zhat_i = z_i$ for any $i\in[n]$. Given the membership, we refine the mean estimates by:
\[
	\thetahat_j \leftarrow \frac{\sum_{i:\zhat_i=j} x_i}{|\{i:\zhat_i=j\}|}.
\]
Since the membership is uniformly assigned, with high probability the sample size of the $j$-th Gaussian component is lower bounded by $\frac{n}{2k}$. Thus, with high probability the squared error of $\thetahat_j$ will be upper bounded by $\order(d k/n)$. Since there are $k$ components, the overall squared error is bounded by $\order(d k^2/n)$. Putting pieces together, we have an upper bound on the mean-squared error of parameter estimation.

\begin{proposition}\label{prop:gmm-upper}
Suppose that $d \geq 3k$ and $n$ is greater than a fixed polynomial function of $(d,k,1/\rho)$. Let the true parameter $\theta$ be $\rho$-perturbed from an arbitrary matrix $\thetastar\in \R^{d\times k}$.
In addition, assume that the distances between the columns of $\thetastar$ are at least $D = c\sqrt{\log(nk)}$ for some universal constant $c$. Then there is a universal constant $C$ such that the estimator $\thetahat$ described above achieves mean-square error:
\[
	\E[\norms{\thetahat - \theta}_{F}^2] \leq \frac{C dk^2}{n}.
\]

\end{proposition}


\subsection{Minimax Risk of Smoothed Analysis}

In this section, we formalize the notion of minimax risk under smoothed analysis. Similar to the classical statistical setting, the \emph{minimax risk under smoothed analysis} can be defined in a game theoretic way. The learner first chooses an estimator $\thetahat$, then the adversary chooses a parameter $\theta^*$ from the parameter space $\Theta$, which is randomly perturbed to form the true parameter $\theta$. The data $X$ is generated with respect to $\theta$. Under this random perturbation framework, the minimax risk is defined as:
\begin{align}\label{eqn:smooth-minimax-risk}
		R_{\rm minimax} \defeq \inf_{\widehat \theta} \sup_{\theta^*\in \Theta}\; \E_{\theta}[L(\widehat \theta(X), \theta)]
\end{align}
where $L(\cdot,\cdot)$ is the loss function. In our GMM application, the parameters are the means of mixture components. The parameter space is the set of means whose mutual distances are lower bounded by $D$. The true parameter is generated by a random Gaussian perturbation with variance $\rho^2$. The loss is the Frobenius norm of the difference of matrices.

We note that the minimax risk~\eqref{eqn:smooth-minimax-risk} differs from the classical notion of minimax risk in that the adversary is not able to explicitly choose the true parameter $\theta$. Instead, the true parameter is sampled from a prior distribution parametrized by $\theta^*$. This Bayes nature makes it hard to lower bound the minimax risk~\eqref{eqn:smooth-minimax-risk} using the traditional Le Cam's or the Fano's method. In particular, both the Le Cam's method and the Fano's method lower bound the minimax risk by assuming a uniform prior over a carefully constructed discrete set. However, in our GMM setting, the prior distribution of parameter $\theta$ is always continuous.

Our Bayes risk lower bound naturally fits into the setting of smoothed analysis. Let $w^*$ be an arbitrary prior distribution over $\theta^*$. Since $\theta$ is perturbed from $\theta^*$, the prior $w^*$ induces a prior $w$ over $\theta$. It is easy to see that the Bayes risk with respect to $w$ is a lower bound on the minimax risk~\eqref{eqn:smooth-minimax-risk}. Thus, it suffices to lower bound the Bayes risk:
\[
	R_{\rm Bayes}(w, L;\Theta) \defeq \inf_{\widehat \theta} \E_{\theta\sim w}[L(\widehat \theta(X), \theta)].
\]
For the GMM example, we construct the prior distribution $w^*$ as follow: the $j$-th column of $\thetastar$, namely the vector $\theta_j^*\in \R^d$, is sampled from the normal distribution $N(D e_j; I_{d\times d})$, where $e_j$ is the unit vector of the $j$-th coordinate. As a consequence, the prior distribution $w$ samples the $j$-th column of $\theta$ from the normal distribution $N(D e_j; (1+\rho^2)I_{d\times d})$.

In the GMM setting, the membership variables $z_i$ are unknown to the estimator. If we assume that the memberships are given to the estimator, it makes the problem easier so that the associated Bayes risk is a smaller than or equal to the original Bayes risk. Since we want to derive a lower bound, we make the assumption that the memberships are given, then partition the instances into $k$ disjoint subsets according to their memberships. Let the $j$-th subset $S_j$ be defined as $S_j \defeq \{x_i: z_i = j\}$. Conditioning on the memberships, the distributions of $\{(\theta_j, S_j)\}_{j=1}^k$ are mutually independent. Thus, we have
\begin{align}\label{eqn:smooth-bayes-risk}
	R_{\rm Bayes}(w, L;\Theta) \geq \sum_{j=1}^k \inf_{\widehat \theta_j}\; \E_{\theta_j\sim w_j}[L(\thetahat_j(S_j), \theta_j)]\geq
	\sum_{j=1}^k \E\Big[\inf_{\widehat \theta_j}\; \E_{\theta_j\sim w_j}[L(\thetahat_j(S_j), \theta_j)|n_j] \Big]
\end{align}
where $w_j$ is the prior distribution $N(D e_j; (1+\rho^2)I_{d\times d})$ and $n_j$ is the cardinality of $S_j$. We focus on the inner term on the right-hand side, namely $\inf_{\widehat \theta_j} \E_{\theta_j\sim w_j}[L(\thetahat_j(S_j), \theta_j)|n_j]$, and find that it is the Bayes risk of Gaussian mean estimation with $n_j$ i.i.d.~samples, with the true parameter $\theta_j$ satisfying a Gaussian prior $w_j$. This Bayes risk can be easily lower bounded by the techniques that we develop in this paper.

\begin{lemma}\label{lemma:gaussian-prior}
Suppose that the standard deviation of normal perturbation $\rho \leq 1$ and $n_j\geq 1$. For a universal constant $c$, the Bayes risk is lower bounded by
\[
	R_{\rm Bayes}(w_j, n_j) \defeq \inf_{\widehat \theta_j} \E_{\theta_j\sim w_j}[L(\thetahat_j(S_j), \theta_j)|n_j] \geq \frac{c d}{n_j}.
\]
\end{lemma}

\begin{proof}[Proof of Lemma \ref{lemma:gaussian-prior}]

We denote the distribution of instances in $S_j$ by $P_{\theta_j}$ and let $\Ps$ be the set of such distributions. Since the support of $w_j$ is $\R^d$, we start by defining a prior whose support is an Euclidean ball of radius $\Gamma \defeq \sqrt{2d}$. Let $\wbar$ be the truncated prior satisfying:
\[
	\wbar(x) = \left\{\begin{array}{ll}
	w_j(x)/c_1 & \mbox{if $\ltwos{x - De_j} \leq \Gamma$}\\
	0 & \mbox{otherwise}.
	\end{array}\right.
\]
The normalization factor $c_1$ is equal to the total mass of $w$ in the ball $\{x:\ltwos{x - De_j}\leq \Gamma\}$. It is straightforward to verify that the radius $\Gamma$ is sufficiently large so that $c_1$ is lower bounded by a universal constant. The prior $\wbar$ can be viewed as restricting the original prior in a finite radius. According to Remark~\ref{rem:Bayes_decomp}, we may lower bound the Bayes risk by
\[
	R_{\rm Bayes}(w_j, n_j) \geq c_1\cdot R_{\rm Bayes}(\wbar, n_j).
\]
Thus, it suffices to lower bound the second term on the right-hand side.

We follow the similar steps of Example~\ref{tutu} to establish the lower bound.
We start by upper bounding the terms $\sup_{a \in \ac} \wbar(B_t(a, L))$ and the chi-squared informativity $I_{\chi^2}(\wbar, \Ps)$. Using definition of the multivariate normal distribution, it is easy to see that
\[
	\sup_{a \in \ac} \wbar(B_t(a, L)) = \wbar(B_t(De_j, L)) \leq \frac{V({\sqrt{t}})}{c_1(2\pi(1+\rho^2))^{d/2}}
\]
where $V({\sqrt{t}})$ represents the volumn of the Euclidean ball of radius $\sqrt{t}$. Thus, there is a universal constant $c_2$ such that $\sup_{a \in \ac} \wbar(B_t(a, L)) \leq (c_2 \sqrt{t}/\Gamma)^d$.
On the other hand, we follow the same steps of Example~\ref{tutu} to upper bound the chi-square informativity. Note that our setup has $n_j$ i.i.d.~observations, but in Example~\ref{tutu} there is only one observation. In this generalized setup, the chi-square distance $\chi^2(P_{\theta} \| P_{\theta'})$ is equal to $\exp \left(n_j\|\theta-\theta'\|_2^2/\sigma^2 \right) - 1$. Plugging this formula into the argument of Example~\ref{tutu}, we obtain the upper bound $I_{\chi^2}(\wbar, \Ps) \leq (3 e \Gamma \sqrt{n_j/d})^d-1$.

Let $\Iu$ be the obtained informativity upper bound. If we choose $t = c d / n_j$ for a sufficiently small constant $c > 0$, then we have $ \sup_{a \in \ac} \wbar(B_t(a, L)) < \frac{1}{4} (1 + \Iu)^{-1}$. Corollary~\ref{cor:bwl} then gives $R_{\rm Bayes}(\wbar, n_j) \geq c d / n_j$.
\end{proof}

Combining inequality~\eqref{eqn:smooth-bayes-risk} and Lemma~\ref{lemma:gaussian-prior}, we have
\[
	R_{\rm Bayes}(w, L;\Theta) \geq \sum_{j=1}^k \frac{cd k}{2n} \mathbb{P}(n_j \leq 2n/k) .
\]
Recall that every $n_j$ satisfies a binomial distribution $B(n, 1/k)$, which has median $\lfloor n/k \rfloor$ or $\lceil n/k \rceil$, thus the probability $\mathbb{P}(n_j \leq 2n/k)$ will be at least $1/2$. It implies that the Bayes risk is lower bounded by $\Omega(dk^2/n)$. Putting pieces together, we have the following lower bound on the minimax risk.

\begin{proposition}\label{prop:gmm-lower}
Assume that the standard deviation of normal perturbation  $\rho \leq 1$, then for some universal constant $c$ the minimax risk of smoothed analysis is lower bounded by $R_{\rm minimax} \geq c \frac{dk^2}{n}$.
\end{proposition}

Comparing proposition~\ref{prop:gmm-upper} and proposition~\ref{prop:gmm-lower}, we find that both the upper bound and the lower bound are tight. More precisely, under the assumptions of proposition~\ref{prop:gmm-upper}, the minimax risk of smoothed analysis is precisely on the order of $dk^2/n$.

\section{Bayes Risk Lower Bounds for Sparse Linear Regression}
\label{sec:sparse_linear}

Linear regression is a canonical problem in machine learning and statistics. For a fixed design matrix $X\in \R^{n\times d}$ and an unknown parameter $\theta\in \R^d$, the learner observes a noise-corrupted response vector $y = X\theta + \varepsilon$, where $\varepsilon$ satisfies an isotropic normal distribution $\normal(0, \sigma^2 I_{d\times d})$. The goal is to take the response vector as input and find an estimator $\thetahat\in \R^d$ for the true parameter~$\theta$. The risk is measured either by the estimation error $\Lest(\theta,\thetahat) \defeq \ltwos{\thetahat - \theta}^2$, or by the prediction error $\Lpre(\theta,\thetahat)  \defeq \ltwos{X\thetahat - X\theta}^2$. Both errors will be studied in this section.

For high-dimensional linear regression, the dimension $d$ can be much greater than the sample size $n$. In order to prevent over-fitting, one needs to impose structural assumptions on the true parameter, for example, assuming that the the number of non-zero entries in vector $\theta$ is at most $k$ ($k\ll d$). Formally, we use $\ball_0(k)$ to represent the set of $k$-sparse vectors in $\R^d$, and assume that $\theta\in \ball_0(k)$. Under this setting, we want to compute an estimator $\thetahat\in \R^d$ to minimize the estimation error or the prediction error. Note that the estimator $\thetahat$ does not need to be $k$-sparse. Hence, our theoretical framework includes \emph{improper learners} which are allowed to output non-sparse estimates whenever they achieve small  risks.

The minimax risks of sparse linear regression have been well-studied. Under the same problem setting, \citet{raskutti2011minimax} proved information theoretic lower bounds on both the estimation error and the prediction error. Certain lower bounds have also been proved under the computation tractability constraint~\cite{zhang2014lower}, or proved for the family of regularized M-estimators~\cite{zhang2015optimal}. All these lower bounds handle the worst-case scenario --- given an arbitrary estimator, they prove the existence of a parameter $\theta$ that attains the lower bound. This setting might be too pessimistic in practice. 
The goal of this section is to study the Bayes risk of sparse linear regression under a natural prior, whose construction is described in the next subsection.

\subsection{Prior Definition and Assumptions}
\label{sec:sparse-prior}

We define a prior over $k$-sparse $d$-dimensional vectors for the true parameter $\theta\in \R^d$, referred to as distribution $w$, as follows:
\begin{enumerate}
\item Uniformly sample a subset of $k$ indices from the integer set $\{1,2,\dots,d\}$, naming this subset by $K$.
\item For every index $i\in K$, the coordinate $\theta_i$ is generated by sampling from the normal distribution $N(0,\tau^2)$. For any $i\notin K$, define~$\theta_i \defeq 0$.
\end{enumerate}
Given an index set $K$, we use $\theta_K$ as a shorthand notation to denote the coordinates of the vector $\theta\in \R^d$ whose indices belong to the set $K$. Similarly, we use $\theta_{-K}$ to denote the subvector whose indices are not in~$K$. Then the the second step of the above generative process can be rephrased as generating $\theta_K\sim N(0,\tau^2 I_{k\times k})$  and defining $\theta_{-K} = 0$. It is clear that the sampled $\theta$ belongs to the $k$-sparse $\ell_0$-ball $\ball_0(k):=  \left\{\theta \in\R^d: ~ \|\theta\|_0 \leq k \right\}$.

One may consider variants of the the prior defined above. For example, one can assume that the number of non-zero entries of the vector $\theta$ is not exactly equal to $k$, but random sampled from a Poisson distribution with mean $k$. One may also redefine the prior of non-zero entries to be a non-Gaussian distribution. However, these variants don't add essential technical challenge to the analysis, thus we focus on the the prior $w$ as a concrete example for illustrating the general idea.

We make an additional assumption on the design matrix $X$ that is important for  characterizing the minimax risk~\citep[see, e.g.][]{raskutti2011minimax}, and in this section, we study their effects on the Bayes risk. Specifically, the design matrix $X$ satisfies the \emph{sparse eigenvalue conditions} with parameter $(\kappa_u, \kappa_\ell)$ if:
\begin{align}\label{eqn:sparse-eigenvalue-condition}
	\kappa_\ell \ltwos{\beta} \leq \frac{\ltwos{X \beta}}{\sqrt{n}} \leq \kappa_u \ltwos{\beta} \quad \mbox{for any $(2k)$-sparse vector $\beta\in \R^d$}.
\end{align}
Here, both $\kappa_u$ and $\kappa_\ell$ are positive constants. As a concrete example, if entries of the matrix $X$ are i.i.d.~sampled from a normal distribution, then the matrix is called a \emph{Gaussian random design}. This type of matrices have been extensively studied for sparse linear regression~\citep{candes2006stable,guedon2008majorizing}, and proved to satisfy condition~\eqref{eqn:sparse-eigenvalue-condition} with $\kappa_u/\kappa_{\ell} = \order(1)$~\citep{raskutti2010restricted}. For the rest of this section, we assume that the design matrix $X$ satisfies the condition~\eqref{eqn:sparse-eigenvalue-condition}.

\subsection{Bayes Risk Lower Bounds}

For sparse linear regression, we denote the parameter space and action space by $\Theta = \ball_0(k)$ and $\mathcal{A} = \R^d$, respectively. We present a Bayes risk lower bound with respect to the prior distribution defined in Section \ref{sec:sparse-prior}, then demonstrate its consequences.

\begin{theorem}\label{theorem:sparse-regression-lower-bound}
Assume that the design matrix $X$ satisfies the sparse eigenvalue condition~\eqref{eqn:sparse-eigenvalue-condition}, and that $d > k^3$. There are universal constants $c',c'' > 0$ such that for any $\tau > 0$, we have Bayes risk lower bounds: $\RBayes(w,\Lest;\Theta) \geq  c'\,T(\tau) $ and $\RBayes(w,\Lpre;\Theta) \geq  c''\,\kappa_\ell^2 T(\tau)$, where $T(\tau)$ is a term defined by
\begin{align}\label{eqn:sparse-regression-lower-bound}
\quad T(\tau) \defeq k\tau^2\,\max\Big\{\frac{1}{1 + \kappa_u^2 \tau^2 n/\sigma^2}, \exp\Big( - \frac{4\kappa_u^2 n}{\sigma^2} \Big[\tau^2 - \frac{\sigma^2 \log(d/k)}{16\kappa_u^2 n} \Big]_+\Big)\Big\}.
\end{align}
\end{theorem}

~\\
\noindent
The proof of Theorem~\ref{theorem:sparse-regression-lower-bound} follows the general strategy that we sketched in earlier sections: first, we bound the mutual informativity using the techniques described in Section~\ref{sec:upper_f_informativity}, then we upper bound the probability
$\sup_{a\in \A} w(B_t(a,L))$ for a specific scalar $t > 0$. Combining the two  upper bounds with Corollary~\ref{cor:bwl} establishes the theorem. See Appendix~\ref{sec:supp_sparse} for the proof. We make a few important remarks of this result in the below.

\paragraph{Estimation versus prediction} By Theorem~\ref{theorem:sparse-regression-lower-bound}, the lower bounds on the estimator error and the prediction error differ by a factor $\kappa_\ell^2$.
As a consequence, if we multiply a constant to the design matrix, then the term $\kappa_\ell^2$ will  also be scaled. If the scalar is very small, then the lower bound on the prediction error will be close to zero, but the lower on the estimation error won't. These are the right scaling for both risks. Indeed, when the design matrix converges to an all-zero matrix, the true parameters will be hard to identify, but the constant estimator $\thetahat\equiv 0$ will be able to achieve a small prediction error.

\paragraph{Comparison with minimax risk lower bounds}
It is worth comparing Theorem~\ref{theorem:sparse-regression-lower-bound} with the well-studied minimax risk lower bound. Under the sparse eigenvalue condition~\eqref{eqn:sparse-eigenvalue-condition}, \citet{raskutti2011minimax} proved the follow minimax risk lower bound:
\begin{align}\label{eqn:sparse-regression-minimax}
	\inf_{\thetahat} \max_{\theta\in \ball_0(k)} \E[\Lest(\theta,\thetahat)] \geq c'\, \frac{\sigma^2 k \log(d/k)}{\kappa_u^2 n} \quad\mbox{and}\quad
	\inf_{\thetahat} \max_{\theta\in \ball_0(k)} \E[\Lpre(\theta,\thetahat)] \geq c''\, \frac{ \kappa_\ell^2 \sigma^2 k \log(d/k)}{\kappa_u^2 n},
\end{align}
where $c'$ and $c''$ are universal constants. These bounds are matched by Theorem~\ref{theorem:sparse-regression-lower-bound}. In particular, if we assume $d > k^3$ and consider the prior distribution with variance:
\begin{align}\label{eqn:sparse-worst-variance}
	\tau^2 = \left( \tau_*^2 \defeq \frac{\sigma^2 \log(d/k)}{16\kappa_u^2 n}\right),
\end{align}
then expression~\eqref{eqn:sparse-regression-lower-bound} implies $T(\tau) = k\tau^2$, and as a consequence, we have
\begin{align}\label{eqn:sparse-regression-least-favorable}
	\RBayes(w,\Lest;\Theta) \geq c'\, \frac{\sigma^2 k \log(d/k)}{\kappa_u^2 n} \quad\mbox{and}\quad
	\RBayes(w,\Lpre;\Theta) \geq c''\, \frac{ \kappa_\ell^2 \sigma^2 k \log(d/k)}{\kappa_u^2 n},
\end{align}
where $c'$ and $c''$ are universal constants. The minimax risk lower bounds~\eqref{eqn:sparse-regression-minimax} and the Bayes risk lower bounds~\eqref{eqn:sparse-regression-least-favorable} thus match by a universal constant factor. Therefore, using our technique, we can directly obtain this classical minimax result on sparse linear regression. It is worth noting  that the lower bounds of~\citet{raskutti2011minimax} were proved by constructing a uniform prior over a discrete packing set over the parameter space. The existence of the proper packing set was proved in a non-constructive, worst-case fashion, which might be too pessimistic in practice. In contrast, our lower bound was established for a realistic and flexible prior which admits a simple closed-form definition and allows for different levels of variance. The theorem also shows that the prior $w$ with the variance level~\eqref{eqn:sparse-worst-variance} is in fact \emph{a least favorable prior} for sparse linear regression.

\paragraph{Bayes risk on the spectrum of priors} Besides the least-favorable setting~\eqref{eqn:sparse-worst-variance}, let us consider the Bayes risk under other choices of the parameter $\tau^2$. When $\tau^2 < \tau_*^2$, Theorem~\ref{theorem:sparse-regression-lower-bound} implies
\begin{align}\label{eqn:spetrum-case-1}
	\RBayes(w,\Lest;\Theta) \geq c'\, k\tau^2 \quad\mbox{and}\quad
	\RBayes(w,\Lpre;\Theta) \geq c''\, \kappa_\ell^2 k\tau^2.
\end{align}
When $\tau^2 \to +\infty$, Theorem~\ref{theorem:sparse-regression-lower-bound} implies
\begin{align}\label{eqn:spetrum-case-2}
	\RBayes(w,\Lest;\Theta) \geq c'\, \frac{k\sigma^2}{\kappa_u^2 n} \quad\mbox{and}\quad
	\RBayes(w,\Lpre;\Theta) \geq c''\, \frac{\kappa_\ell^2 k\sigma^2}{\kappa_u^2 n}.
\end{align}
In both cases, the Bayes risk lower bounds can be significantly smaller than the minimax risk. We argue that these lower bounds are essentially tight under specific assumptions. That is, when taking the prior information into account, we can indeed achieve better rates than the minimax rate.

First, notice that the upper bound:
\begin{align*}
	\E_{\theta\sim w}[\Lest(\theta,\thetahat)] \leq k\tau^2 \quad\mbox{and}\quad
	\E_{\theta\sim w}[\Lest(\theta,\thetahat)] \leq \kappa_u^2 k\tau^2.
\end{align*}
can always be achieved using the constant estimator $\thetahat \equiv 0$. It means that for the case of $\tau^2 < \tau_*^2$, the lower bounds~\eqref{eqn:spetrum-case-1} are tight under the assumption $\kappa_u/\kappa_\ell = \order(1)$.

For the case of $\tau^2 \to +\infty$, we consider the $\ell_0$-norm constrained estimator:
\begin{align}\label{eqn:l0-estimator}
	\thetahat \defeq \arg\inf_{\beta\in \ball_0(k)} \ltwos{X\beta - y}^2.
\end{align}
Whenever $\kappa_u/\kappa_\ell = \order(1)$, \citet{raskutti2011minimax} showed that the estimator~\eqref{eqn:l0-estimator} achieves an error bound $\ltwos{\thetahat-\theta}^2 \leq c\,\frac{k\log (d)}{n}$ with high probability for a constant $c > 0$. Suppose that $\tau^2 = C\,\frac{k\log (d)}{n}$ with a scaling factor $C > c$. For any $i\in K$, the expectation of $\theta_i^2$ is equal to $\tau^2$, so that the probability of $\theta_i^2 \leq c\,\frac{k\log (d)}{n}$ is bounded by $\order(c/C)$. It means that by choosing a large enough $C$ (specifically, choosing $C\gg ck$), the lower bound $\theta_i^2 > c\,\frac{k\log (d)}{n}$ will hold for every $i\in K$ with a probability close to 1. Combining this fact with the bound $\ltwos{\thetahat-\theta}^2 \leq c\,\frac{k\log (d)}{n}$, we find that the support of $\thetahat$ must agree with $K$, so that the estimator must satisfy:
\[
	\thetahat_K = \arg\inf_{\beta\in \R^k} \ltwos{X_K\beta - y}^2 \quad\mbox{and}\quad \thetahat_{-K} = 0,
\]
where $X_K$ is a submatrix of $X$ consisting of columns indexed by $K$. In other words, the vector $\thetahat_K$ is the least-square estimator for a $k$-dimensional linear regression problem. For estimators taking this form, both the estimation error and the prediction error are known to match the lower bound~\eqref{eqn:spetrum-case-2} with high probability.

\section{Conclusions}
In this paper, we presented lower bounds for the Bayes risk in abstract decision-theoretic problems. Our bounds are quite general and only require upper bounds on $\sup_{a \in \ac} w(B_t(a, L))$ and the $f$-informativity $I_f(w, \Ps)$ for their application. Because of the generality, the bounds are not always tight however. For example, the bounds involve $\sup_{a \in \ac} w(B_t(a, L))$ and this quantity becomes large when the prior $w$ has a spike. In such situations, our main Bayes risk lower bound in Theorem \ref{bwl} will not be tight. In specific examples, this looseness can be remedied by adhoc fixes such as the one described in Remark  \ref{rem:Bayes_decomp}. Obtaining tight lower bounds for the Bayes risk in the generality considered in  this paper is a challenging open problem.

%
%

\section*{Acknowledgement}
Adityanand Guntuboyina  is supported by NSF Grant DMS-1309356. The authors would like to thank Michael I. Jordan and Sivaraman Balakrishnan for helpful discussions.


\newpage

\appendix
\section{Proofs and Additional Results for Section \ref{sec:Bayes_01} on Bayes Risk Lower Bound for Zero-one Loss}

\subsection{Proof of Lemma~\ref{lem:lower_property}}
Recall the expression \eqref{eq:phi} of $\phi_f(a, b)$. We first fix
$b$ and show that $g(a): a \mapsto \phi_f(a,b)$ is a
non-increasing for $a \in  [0, b]$. There is nothing to prove if $b =
0$ so let us assume that $b > 0$. We will consider the cases $0 < b <
1$ and $b = 1$ separately. For $0 < b < 1$, note that for every $a \in (0, b]$, we
have,
\begin{equation*}
  g_L'(a) = f_L' \left(\frac{a}{b} \right) - f_R' \left(\frac{1 -
      a}{1 -b} \right),
\end{equation*}
where $g_L'$ and $f_L'$ represent left derivatives and $f_R'$
represents right derivative (note that $f_L'$ and $f_R'$ exist because
of the convexity of $f$). Because $\frac{a}{b} \leq \frac{1-a}{1-b}$ for
every $0 \leq a \leq b$ and $f$ is convex, we see that
$$ g_L'(a) \leq  f_R' \left(\frac{a}{b} \right) -f_R' \left(\frac{1 -
      a}{1 - b} \right) \leq 0$$ for every $a \in (0, b]$ which
  implies that $g(a)$ is non-increasing on $[0, b]$.

When $b = 1$, we have $g_{L}'(a) = f_L'(a) - f'(\infty)$ which is
always $\leq 0$ because $f$ is convex (note that $f'(\infty) = \lim_{x
\uparrow \infty} f(x)/x = \lim_{x \uparrow \infty} (f(x) - f(1))/(x -
1)$).

The convexity and continuity of $g$ follow from the convexity of
$f$ and the expression for $\phi_f$.

Next, we fix $a$ and show that $h(b): b \mapsto \phi_f(a,b)$ is
non-decreasing for $b \in  [a, 1]$. For every $b \in [a, 1)$, we have,
\begin{equation}
  h_R'(b) = f\left( \frac{a}{b} \right) - \frac{a}{b} f'_L\left(\frac{a}{b} \right)-f\left(\frac{1-a}{1-b} \right) + \frac{1-a}{1-b} f'_R\left(\frac{1-a}{1-b} \right),
  \label{eq:h_R}
\end{equation}
where $h_R'$ represents the right derivative of $h$. By the convexity of $f$,
\begin{equation}
  \label{eq:h_R_1}
  f\left( \frac{a}{b} \right) -f\left(\frac{1-a}{1-b} \right) \geq f_R' \left(\frac{1-a}{1-b} \right)  \left(\frac{a}{b} - \frac{1-a}{1-b} \right).
\end{equation}
Combining \eqref{eq:h_R} with \eqref{eq:h_R_1}, we obtain that,
\begin{equation*}
h_R'(b) \geq \frac{a}{b} \left(f_R' \left(\frac{1-a}{1-b} \right)- f'_L\left(\frac{a}{b} \right) \right) \geq \frac{a}{b} \left(f_L' \left(\frac{1-a}{1-b} \right)- f'_L\left(\frac{a}{b} \right) \right) \geq 0,
\end{equation*}
where the last inequality is because that $\frac{a}{b} \leq \frac{1-a}{1-b}$ for every $0 \leq a \leq b$ and $f$ is convex. The non-negativity of $h_R'(b)$ implies that $h(b)$ is non-decreasing on $[a, 1]$.

\subsection{A Variant of Fano's Inequality from \cite{BraunPokutta}}
\label{sec:supp_Fano}
One of the main results in \cite{BraunPokutta} (Proposition 2.2)
establishes the following variant of Fano's inequality. Consider the
setting of Lemma \ref{lem.dpi}. In particular, recall the quantities
$R^{\dr}$ and $R_Q^{\dr}$ from \eqref{eq:def_R_Q} and also the sets
$B(a), a \in \ac$ from \eqref{ba}. \cite[Proposition
2.2]{BraunPokutta} proved the following: for any decision rule $\dr$,
\begin{equation}\label{bp}
  R^{\dr} \geq \frac{-I(w, \mathcal{P}) - H(R^{\dr}) - \log w_{\max}}{\log \left[ (1 - w_{\min})/w_{\max} \right]},
\end{equation}
where $H(x) := -x \log x - (1-x) \log(1-x)$, $w_{\min} := \inf_{a \in
  \ac} w(B(a))$ and $w_{\max} := \sup_{a \in  \ac} w(B(a))$.

Below we provide a proof of this inequality using Lemma
\ref{lem.dpi}. The proof given in \cite{BraunPokutta} is quite
different proof. Using \eqref{main.eq1} from Lemma \ref{lem.dpi} with
$f(x)= x \log x$, we have for any decision rule
\begin{equation*}
  \int_{\Theta} D_f(P_{\theta} \| Q) w(d\theta) \geq R^{\dr} \log
  \frac{R^{\dr}}{R_Q^{\dr}} + (1 - R^{\dr}) \log \frac{1 - R^{\dr}}{1
    - R_Q^{\dr}}.
\end{equation*}
We can rewrite this as
\begin{equation}\label{eq:Braun}
  \int_{\Theta} D_f(P_{\theta} \| Q) w(d\theta) \geq -H(R^{\dr}) -
  R^{\dr} \log R_Q^{\dr} - (1 - R^{\dr}) \log(1 - R_Q^{\dr})
\end{equation}
where $H(x) := -x \log x - (1-x) \log(1-x)$. Since $L$ in Lemma
\ref{lem.dpi} is zero-one valued.
\begin{equation}
  R_Q^{\dr} =1- \E_{Q} w(B(\dr(X)))
  \label{eq:R_Q}
\end{equation}
where $\E_Q$ denotes expectation taken under $X \sim Q$ and and
$B(\dr(X))$ is defined in \eqref{ba}.
As a result, we have
\begin{equation}\label{qb}
 1- \max_{a \in \ac} w(B(a)) \leq R_Q^{\dr} \leq 1- \min_{a \in \ac} w(B(a)).
\end{equation}
Using the bounds in~\eqref{qb} on the right hand side of \eqref{eq:Braun}, we deduce
\begin{equation*}
  \int_{\Theta} D_f(P_{\theta} \| Q) w(d\theta) \geq -H(R^{\dr}) - R^{\dr} \log \left(1 - w_{\min} \right) - (1 - R^{\dr})  \log w_{\max}.
\end{equation*}
where $w_{\min} := \inf_{a \in \ac} w(B(a))$ and $w_{\max} := \sup_{a \in  \ac} w(B(a))$ for notational simplicity.
Taking the infimum on the left hand side above over all probability measures $Q$, we obtain
\begin{equation*}
 I(w, \mathcal{P}) \geq -H(R^{\dr}) - R^{\dr} \log \left(1 - w_{\min} \right) - (1 - R^{\dr})  \log \left( w_{\max} \right).
\end{equation*}
Provided $w_{\min} + w_{\max} < 1$, one can rewrite the above
inequality as \eqref{bp}. This completes the proof of \eqref{bp}.

\subsection{Proof of Corollary \ref{cor:lb_01_f}}\label{colr}
\label{sec:cor_lb_01_f}

\begin{enumerate}
    \item \textbf{Proof of inequality \eqref{eq:chi}}: Applying
      Theorem \ref{man} with $f(x) = x^2 - 1$, we obtain
    \begin{equation*}
    I_{\chi^2}(w, \Ps) \geq \frac{(R_0-R)^2}{R_0(1-R_0)}
    \end{equation*}
    Because $R \leq R_0$, we can invert the above to obtain
    \eqref{eq:chi}.

  \item \textbf{Proof of inequality \eqref{eq:total}}:
    Theorem~\ref{man} with $f(x) = |x - 1|/2$ gives
    \begin{equation*}
     I_{TV}(w, \Ps) \geq  \frac{R_0}{2} \left|\frac{R}{R_0} -1 \right|  + \frac{1-R_0}{2} \left|\frac{1-R}{1-R_0} -1 \right| =R_0-R,
    \end{equation*}
    where the last equality uses the fact that $R \leq R_0$. Inverting
    the above inequality, we obtain \eqref{eq:total}.

  \item \textbf{Proof of inequality \eqref{eq:hellinger_3}}:
   Theorem \ref{man} with $f(x) = f_{1/2}(x) = 1 - \sqrt{x}$ gives
   \begin{equation}\label{enim}
     I_{f_{1/2}}(w, \Ps) \geq 1 - \sqrt{R R_0} - \sqrt{(1 - R)(1 -
       R_0)}.
   \end{equation}
  Assume that $P_{\theta}$ has density $p_{\theta}$ with respect to a
  common dominating measure $\mu$. We shall show below that
  \begin{equation}\label{exha}
    I_{f_{1/2}}(w, \Ps) = 1 - \sqrt{\int_{\samp} u^2 d\mu} \qt{where
      $u := \int_{\Theta} \sqrt{p_{\theta}} w(d\theta)$. }
  \end{equation}
  To see this, fix a probability measure $Q$ that has a density $q$
  with respect to $\mu$. We can then write
  \begin{equation*}
    \int_{\Theta} D_{f_{1/2}} (P_{\theta} \| Q) w(d\theta) = 1 -
    \int_{\samp} \sqrt{q} \left( \int_{\Theta} \sqrt{p_{\theta}}
      w(d\theta) \right) d\mu = 1 - \int_{\samp} \sqrt{q u^2} d\mu
  \end{equation*}
  It follows then from the Cauchy-Schwarz inequality that
\begin{equation*}
  \int_{\Theta} D_{f_{1/2}}(P_{\theta}||Q) w(\mathrm{d}\theta)=1-\int_{\samp} \sqrt{q u^2}\;\mathrm{d}\mu \geq 1- \sqrt{\int_{\samp} u^2 \; \mathrm{d} \mu},
\end{equation*}
with equality holding when $q$ is proportional to $u^2$. This proves
\eqref{exha}. We now see that
\begin{align}
\label{eq:hellinger_2}
 \int_{\samp} u^2 \; \mathrm{d} \mu & =\int_{\Theta} \int_{\Theta} \int_{\samp} \sqrt{p_{\theta}} \sqrt{p_{\theta'}} \; \mathrm{d}\mu~ w(\mathrm{d} \theta) w(\mathrm{d} \theta')  = 1- \frac{1}{2} h^2 
\end{align}
where $h^2$ is defined as
      \begin{equation}\label{eq:h_square}
        h^2=\int_{\Theta} \int_{\Theta}  H^2(P_{\theta}\| P_{\theta'})  w(\mathrm{d} \theta)w(\mathrm{d} \theta').
        \end{equation}
This, together with
\eqref{enim} and \eqref{exha}, gives the inequality
\begin{equation}\label{eq:hellinger_1}
  \sqrt{R R_0} + \sqrt{(1 - R) (1 - R_0)} \geq \sqrt{1 - \frac{h^2}{2}}
\end{equation}
Now under the assumption $h^2 \leq 2 R_0$, the right hand side of the
inequality \eqref{eq:hellinger_1} lies between $\sqrt{1-R_0}$ and
$1$. On the other hand, it can be checked that, as a function in $R$,
the left hand side of \eqref{eq:hellinger_1} is strictly increasing
from $\sqrt{1-R_0}$ (at $R=0$) to 1 at ($R=R_0$).  Therefore, from
\eqref{eq:hellinger_1}, we know that $R \geq \widehat{R}$ where
$\widehat{R} \in [0, R_0]$ is the solution to the equation
obtained by replacing the inequality \eqref{eq:hellinger_1} with an
equality. One can solve this equation and obtain two solutions. One of
two solutions can be discarded by the fact that $R \leq R_0$. 
The other solution is given by:
\begin{equation*}
  \widehat{R} = R_0-(2R_0-1) \frac{h^2}{2} - \sqrt{R_0(1-R_0)} \sqrt{h^2(2-h^2)}
\end{equation*}
and thus we have $R \geq \widehat{R}$ which proves inequality
\eqref{eq:hellinger_3}.

We note that the lower bound on $R$ in \eqref{eq:hellinger_3} only
holds under the condition $h^2  \leq 2 R_0$.  When $h^2 > 2 R_0$,
inequality \eqref{eq:hellinger_3} holds for every $R \in [0, R_{Q^*}]$
and thus cannot provide a non-trivial lower bound on $R$. As an
example, when $\Theta = \ac = \{1, \dots, N\}$, $L(\theta, a) =
\mathbbm{I}\{\theta \neq a\}$ and $w$ is the uniform prior on
$\Theta$, it is easy to see that $R_0 = 1 - (1/N)$ and
\begin{equation}
  h^2= \frac{1}{N^2} \sum_{\theta \neq \theta'} H^2(P_{\theta}\|
  P_{\theta'}) \leq 2 \frac{N(N-1)}{N^2} =2 R_{Q^*}.
\end{equation}
Inequality~\eqref{eq:hellinger_3} therefore is equivalent to
\begin{equation*}
    R \geq 1- \frac{1}{N} -\frac{N-2}{N} \frac{h^2}{2}-
    \frac{\sqrt{N-1}}{N} \sqrt{h^2(2-h^2)}.
\end{equation*}
This recovers the result in Example \RNum{2}.6 in \cite{Aditya:fdiv}.
\end{enumerate}

\subsection{Derivations of Le Cam's Inequality (Two Hypotheses) and Assouad's Lemma and other Results from Corollary \ref{cor:lb_01_f}}
\label{sec:supp_LeCam}

To demonstrate the application of Corollary \ref{cor:lb_01_f}, we apply it to derive the two hypotheses version of Le Cam's inequality (with total variation distance) and Assouad's lemma (see Theorem 2.12 in \citep{Tsybakov:nonpara}).

The simplest version of the Le Cam's inequality, the so-called two-point argument, is an easy corollary of~\eqref{eq:total}. Indeed, applying~\eqref{eq:total} with $\Theta = \ac = \{\theta_0, \theta_1\}$, $L(\theta, a) = \mathbbm{I}\{\theta \neq a\}$ and $w\{0\} = w\{1\} = 1/2$ (and note that $R_0=1/2$), we obtain that for any distribution $Q$ on $\mathcal{X}$,
\begin{equation*}
  \frac{1}{2} \left(\|P_{\theta_0} - Q\|_{TV} + \|P_{\theta_1} - Q\|_{TV} \right) \geq I_{TV}(w, \mathcal{P}) \geq 1/2 - R.
\end{equation*}
Taking $Q = (P_{\theta_0} + P_{\theta_1})/2$, we obtain Le Cam's inequality:
\begin{equation}\label{eq:LeCam}
  R_{\rm minimax} \geq \frac{1}{2} \left(1 - \|P_{\theta_0} - P_{\theta_1}\|_{TV} \right).
\end{equation}
The more involved Le Cam's inequality considers $\Theta = \ac = \Theta_0 \cup \Theta_1$ for two disjoint subsets $\Theta_0$ and $\Theta_1$ and loss function $L(\theta, a) = \mathbbm{I} \{\theta \in \Theta_1, a \in \Theta_2\} + \mathbbm{I}\{\theta \in \Theta_2, a \in \Theta_1\}$. The inequality states that for every pair of probability measures $w_0$ and $w_1$ concentrated on $\Theta_0$ and $\Theta_1$ respectively,
\begin{equation}\label{eq:LeCam_set}
  R_{\rm minimax} \geq \frac{1}{2} \left(1 - \|m_0 - m_1\|_{TV} \right)
\end{equation}
where $m_{0}$ and $m_1$ are marginal densities given by $m_{\tau}(x) = \int p_{\theta}(x) w_{\tau}(\mathrm{d}\theta)$ for $\tau = 0, 1$. To prove~\eqref{eq:LeCam_set}, consider the prior $w = (w_0 + w_1)/2$. Under this prior, the problem is easily converted to the previous binary testing problem.  In particular, the data generating process under the prior $w$ can be viewed as first sampling $\tau \sim \mathrm{Uniform}\;\{0,1\}$ and then $X \sim m_{\tau}$. The decision $a \in \mathcal{A}$ can be converted into the binary decision $\hat{\tau}=\mathbbm{I}(a \in \Theta_1)$. The loss function is $L(\tau, \hat{\tau}) = \mathbbm{I}(\tau \neq \hat{\tau}) $. The Bayes risk under the prior $w$ can be re-written as,
\begin{equation}
  R_{\rm Bayes}(w, L; \Theta)  = \frac{1}{2} \inf_{\hat{\tau}} \sum_{\tau=0,1}  \int_{\mathcal{X}}   \mathbbm{I}(\tau \neq \hat{\tau}(x))  m_{\tau}(x)  \mu(\mathrm{d} x),
  \label{eq:LeCam_set_1}
\end{equation}
which has the same form as the Bayes risk in the earlier binary testing problem. Applying the same argument as for proving \eqref{eq:LeCam}, we obtain the lower bound on the Bayes risk in \eqref{eq:LeCam_set_1}, $R_{\rm Bayes}(w, L; \Theta) \geq \frac{1}{2} \left(1 - \|m_0 - m_1\|_{TV} \right)$, which further implies \eqref{eq:LeCam_set}.

Another classical minimax inequality involving the total variation distance is Assouad's inequality~\citep{Assouad:83} which states that if $\Theta = \mathcal{A} = \{0,1\}^d$ and the loss function $L$ is defined by the Hamming distance, i.e., $L(\theta, a) = \sum_{i=1}^d \mathbbm{I}(\theta_i \neq a_i)$, then
\begin{eqnarray}
  \Rmx \geq \frac{d}{2} \min_{L(\theta, \theta')=1} \left(1-\|P_{\theta} - P_{\theta'}\|_{TV} \right).
  \label{eq:assouad}
\end{eqnarray}
This inequality is also a consequence of~\eqref{eq:total}: let $w$ be the uniform probability measure on $\Theta$ and $L_1(\theta, a)= \mathbbm{I}(\theta_1 \neq a_1)$. Under $w$, the marginal distribution of the first coordinate is $w_1\{0\} = w_1\{1\} = 1/2$. Let $m_\tau(x) :=\sum_{\theta: \theta_1=\tau} p_{\theta}(x)/2^{d-1}$  for $\tau \in \{0,1\}$ be the corresponding marginal density of $X$ and let $Q(x)= \frac{1}{2} \left( m_0(x)+m_1(x) \right)$. Applying the same argument as for proving \eqref{eq:LeCam}, we obtain that the minimax risk for the zero-one valued loss function $L_1(\theta, a)$ is bounded below by $\frac{1}{2} \left( 1- \|m_0 - m_1\|_{TV} \right) \geq \frac{1}{2} \min_{L(\theta, \theta')=1} \left(1-\|P_{\theta} - P_{\theta'}\|_{TV} \right)$. Repeating this argument for $L_i(\theta, a) := \mathbbm{I}\{\theta_i \neq a_i\}$ for $i = 2, \dots, d$ and adding up the resulting bounds, we obtain~\eqref{eq:assouad}.

By using Le Cam's inequality (see, e.g., Lemma 2.3 in \citep{Tsybakov:nonpara}) which states that:
\begin{equation*}
  \|P_{\theta} - P_{\theta'}\|_{TV} \leq \sqrt{H^2(P_{\theta} \| P_{\theta'}) \left(1 - \frac{1}{4} H^2(P_{\theta} \| P_{\theta'}) \right)},
\end{equation*}
the inequality in \eqref{eq:assouad} further implies the Hellinger distance version of Assouad's inequality in the book \citet[Theorem 2.12]{Tsybakov:nonpara}, i.e.,
\begin{equation}\label{eq:hell_assouad}
  R_{\rm minimax} \geq \frac{d}{2} \min_{L(\theta, \theta') = 1} \left\{1 - \sqrt{H^2(P_{\theta} \| P_{\theta'}) \left(1 - \frac{1}{4} H^2(P_{\theta} \| P_{\theta'}) \right)} \right\} .
\end{equation}

\subsection{Comparison of the Bounds for Different Divergences}
\label{sec:compare}
We provide some qualitative comparisons of Bayes risk lower bounds given by Theorem \ref{man} for different power divergences. 
In particular, let us consider the discrete setting where $\Theta = \ac = \{\theta_1, \dots, \theta_N\}$, $L(\theta, a) = \mathbbm{I}\{\theta \neq a\}$, and $w$ is the discrete uniform. Note that in such a ``multiple testing problem" setup, $R_0$ is equal to $1 - (1/N)$. We take $N$ sufficiently large so that $R_0$ is close to 1. To establish minimax lower bounds, a typical approach is to reduce the estimation problem to a multiple hypotheses testing problem in the aforementioned setup, then try to prove that the Bayes risk $R \geq c>0$ (see Section 2.2. in \cite{Tsybakov:nonpara}). Without loss of generality, we take $c = 1/2$ and we shall see how the three  inequalities  \eqref{gf}, \eqref{eq:chi} and \eqref{eq:hellinger_3} work to establish $R \geq 1/2$.

Let us start with \eqref{gf} corresponding to KL divergence, which is equivalent to the classical Fano's  inequality \eqref{eq:classcial_Fano} in the discrete setting. To establish $R \geq 1/2$, the following condition should hold:
\begin{equation}\label{nef}
  I(w, \Ps) \leq \frac{1}{2} \log \left(\frac{N}{4} \right).
\end{equation}
We remark that $I(w, \Ps)$ is at most $\log N$ even if every the pairwise KL divergence $D(P_{\theta_i} \| P_{\theta_j})$ equals $\infty$ for $i \neq j$.  This fact will be clear from the inequality \eqref{ybp} from Section \ref{sec:upper_f_informativity} (let $M=N$ and $Q_j=P_{\theta_j}$ for $1 \leq j \leq M$). The upper bound on $I(w, \mathcal{P})$ in  \eqref{ybp} further provides a sufficient condition to verify \eqref{nef}. 

Now we turn to \eqref{eq:chi} corresponding to the chi-squared divergence. Since $R_0 = 1-(1/N)$, inequality \eqref{eq:chi} implies a sufficient condition for $R \geq 1/2$:
\begin{equation}\label{nec}
  I_{\chi^2}(w,\Ps) \leq \frac{N^2}{N-1} \left(\frac{1}{2} - \frac{1}{N} \right)^2.
\end{equation}
When $N$ is large, the above condition is equivalent to $I_{\chi^2}(w, \Ps) \leq N/4$.  Note that the maximum possible value of $I_{\chi^2}(w, \Ps)$ in this discrete setting is $N - 1$ (even when $\chi^2(P_{\theta_i} \| P_{\theta_j})  = \infty$ for every $i \neq j$) and this follows from our upper bounds on $f$-informativity for a class of power divergences in \eqref{eq:upper_f_power} (see Section \ref{sec:upper_f_informativity}).


The conditions \eqref{nef} and \eqref{nec} don't imply each other. The chi-squared divergence is always greater than the KL divergence (see Lemma 2.7 in \cite{Tsybakov:nonpara}), but the upper bound required by~\eqref{nec} is also weaker than that required by~\eqref{nef}. For both divergences, constructing more hypotheses (i.e., choosing $N>2$) is often helpful for showing $R \geq 1/2$.

For the Hellinger distance (inequality \eqref{eq:hellinger_3}), we claim that it gives no more useful bounds than those obtained by a simple two point argument. To see this, since $R_0 = 1 - (1/N)$,  inequality \eqref{eq:hellinger_3} implies
\begin{equation*}
  R \geq 1 - \frac{1}{N} - \frac{N-2}{N} \frac{h^2}{2} - \frac{\sqrt{N-1}}{N} \sqrt{h^2(2 - h^2)}
\end{equation*}
where $h^2 = \sum_{i, j} H^2(P_{\theta_i} || P_{\theta_j})/N^2$. When $N$ is large, the above inequality reduces to effectively $R \geq 1 - (h^2/2)$. Therefore a sufficient condition for $R \geq 1/2$ is $h^2 \leq 1$, which is equivalent to,
\begin{equation*}
  \frac{1}{N(N-1)/2} \sum_{i < j} H^2(P_{\theta_i}|| P_{\theta_j}) \leq \frac{N}{N-1}.
\end{equation*}
When $N$ is large, the above displayed condition implies the existence of $i < j$ for which $H^2(P_{\theta_i}|| P_{\theta_j}) \leq 1$. Let $\tilde{w}$ denote the prior $\tilde{w}\{i\} = \tilde{w}\{j\} = 1/2$. It is easy to see that the Bayes risk for $\tilde{w}$ equals
$
  R_{\rm Bayes}(\tilde{w}) = \frac{1}{2} \left(1 - \|P_{\theta_i} - P_{\theta_j}\|_{TV} \right).
$
By Le Cam's inequality (see Lemma 2.3 in \cite{Tsybakov:nonpara}), we have,
\begin{equation*}
  R_{\rm Bayes}(\tilde{w}) \geq \frac{1}{2} \left( 1 - H(P_{\theta_i} || P_{\theta_j}) \sqrt{1 - \frac{H^2(P_{\theta_i}|| P_{\theta_j})}{4}} \right)
\end{equation*}
Since $H(P_{\theta_i} || P_{\theta_j}) \leq 1$, it is easy to verify from the above that $R_{\rm Bayes}(\tilde{w}) \geq 1/8$. Therefore in this discrete setting, if inequality \eqref{eq:hellinger_3} implies $R_{\rm Bayes}(w) \geq 1/2$, then there is a much simpler two point prior $\tilde{w}$ for which $R_{\rm Bayes}(\tilde{w}) \geq 1/8$. It shows that for Hellinger distance, considering $N>2$ hypotheses is not more useful than using a pair of hypotheses. The reason is that the Hellinger informativity can be written as an expression involving pairwise Hellinger distances. In particular, it can be seen from the proof of inequality \eqref{eq:hellinger_3} that
\[
  I_{f_{1/2}}(w, \Ps) = 1 - \Bigl(1 - \frac{1}{2N^2} \sum_{i, j} H^2(P_{\theta_i} || P_{\theta_j})\Bigr)^{1/2}.
\]
In contrast, the mutual information, $I(w, \Ps)$, cannot be written in terms of $D(P_{\theta_i} \| P_{\theta_j})$ for $i \neq j$ (recall that $I(w, \Ps)$ is always at most $\log N$ even when $D(P_{\theta_i} \| P_{\theta_j}) = \infty$ for all $i \neq j$). The same holds for $I_{\chi^2}(w, \Ps)$ as well (which is always at most $N - 1$ even if $\chi^2(P_{\theta_i} \| P_{\theta_j}) = \infty$ for all $i \neq j$).


If the eventual goal of obtaining Bayes risk lower bounds is to obtain lower bounds up to multiplicative constants on the minimax risk, then the bound in \eqref{eq:hellinger_3} gives no more useful bounds than those obtained by the simple two point argument. In this sense, inequality \eqref{eq:hellinger_3} induced by Hellinger distance is not as useful as inequalities \eqref{gf} and \eqref{eq:chi}. In fact, the Hellinger distance is seldom used in lower bounding minimax risk involving many hypotheses (for example, none of the minimax rates in the examples of  \cite{Tsybakov:nonpara} involving multiple hypotheses testing are established via Hellinger distance).

\section{Proofs and Additional Results for Section
  \ref{sec:upper_f_informativity} on Upper Bounds on
  $f$-informativity} \label{ange}

\subsection{Proof of Lemma \ref{lem:concavity}}\label{sec:proof_concave}

 Let $\phi(t) \equiv t^r$ with $\phi'(t)=r t^{r-1}$ and
  $\phi''(t)=r(r-1) t^{r-2}$  and $\varphi(t)=t^{1/r}$ with
  $\varphi'(t)=\frac{1}{r}t^{(1-r)/r}$. Then
  $$f(u)=\varphi\left(\int_{T}  \phi(u(t)) \mu(\mathrm{d}t) \right).$$
  To prove the concavity of $f(u)$, considering the scalar function
  \begin{eqnarray}
    h(s) = \varphi\left(\int_{T}  \phi(u(t)+s v(t) ) \mu(\mathrm{d}t) \right),
  \end{eqnarray}
  for arbitrary $u, v \in  L^{r}_{\mu}(T)$. We notice that concavity
  of $f$ is equivalent to concavity at zero for all functions of the
  form $h$, and we therefore only have to show that $h''(0) \leq
  0$. Let $g(s)= \int_{T}  \phi(u(t)+s v(t) ) \mu(\mathrm{d}t)$,
  \begin{align*}
    h'(s) = & \varphi'(g(s)) \int_T \phi'(u(t)+sv(t))v(t) \mu(\mathrm{d} t) \\
    h''(s)  = &\varphi''(g(s))\left( \int_T \phi'(u(t)+sv(t))v(t) \mu(\mathrm{d} t) \right)^2  \\
               & + \varphi'(g(s)) \int_T \phi''(u(t)+sv(t))v^2(t) \mu(\mathrm{d} t)
  \end{align*}
  By plugging in the definitions of $\phi(t)$, $\varphi(t)$, $g(s)$ and setting $s=0$, we have
  \begin{eqnarray*}
    h''(0)=\frac{1-r}{f(u)}\left( \left(f(u)^{1-r} \int_T u(t)^{r-1} v(t) \mu(\mathrm{d}t) \right)^2- f(u)^{2-r} \int_T u(t)^{r-2} v^2(t) \mu(\mathrm{d}t)\right)
  \end{eqnarray*}
  Applying the Cauchy-Schwarz inequality $$\left(\int_T a(t)b(t)
    \mu(\mathrm{d}t)\right)^2  \leq  \left(\int_T a(t)^2
    \mu(\mathrm{d}t)\right) \left(\int_T b(t)^2
    \mu(\mathrm{d}t)\right) $$ with
  $a(t)=\left(\frac{f(u)}{u(t)}\right)^{-r/2}$ and $b(t) = v(t)
  \left(\frac{f(u)}{u(t)}\right)^{1-r/2}$ and noticing that $r<1$, we
  have $h''(0)\leq 0$, which completes the proof.

\subsection{Example Demonstrating the Effectiveness of Theorem~\ref{cd}}\label{effb}
In this example, we show the tightness of the upper bound in \eqref{co1} in terms of chi-squared divergence ($\alpha = 2$). In particular, let the distribution $P$ be the $n$-fold product of $N(0,1)$ and $Q_{\xi}$ be the $n$-fold product of $N(\xi,1)$ where $\xi \sim N(0,1)$. It is straightforward to show that the marginal distribution $\bar{Q}$ is a $n$-dimensional Gaussian distribution with mean $\mathbf{0}$ and covariance matrix $I_n+ \mathbf{1}_n \mathbf{1}_n ^T$, where $\mathbf{1}_n$ denotes the $n$-dimensional all one vector and $I_n$ the $n\times n$ identity matrix.

  Since $\chi^2(P||Q_{\xi})=\exp(n \xi^2)-1$, the right hand side of \eqref{co1} equals to $\sqrt{2n+1}-1$. The term   $\chi^2(P||\bar{Q})$ on the left hand side of \eqref{co1}  is difficult to evaluate. However, we can lower bound $\chi^2(P||\bar{Q})$ using the following standard inequality $\exp\left( D(P||\bar{Q}) \right) -1 \leq \chi^2(P||\bar{Q})$ (see Lemma 2.7 in \citet{Tsybakov:nonpara}).
  By the closed-form expression for KL divergence between two multivariate Gaussian distributions, we have $D(P||\bar{Q})=\frac{1}{2} \left( \log(n+1) - n/(n+1) \right)$ and thus
  \begin{eqnarray*}
    e^{-1/2} \sqrt{n+1}-1 \leq \exp\left( D(P||\bar{Q}) \right) -1  \leq \chi^2(P||\bar{Q})
  \end{eqnarray*}
  As we can see, the upper bound $\sqrt{2n+1}-1$ in \eqref{co1} is quite tight and $\chi^2(P||\bar{Q})$ is on the order of $\sqrt{n}$.

  \subsection{Proof of Corollary~\ref{nd}}\label{sec:cor_nd}
Fix $0 < \delta \leq A^{-1/2}$. Partition the entire parameter space $\Theta$ into small hypercubes each with side length $\delta$. For each such hypercube $S$ and let $\pi_S$ denote the probability measure $w$ conditioned to be in $S$ i.e., $\pi_S(C) := w(C)/w(S)$ for measurable set $C \subseteq S$.

For every decision rule $\dr(X)$, clearly
\begin{equation*}
  \int_{\Theta} \E_{\Theta} L(\theta, \dr(X)) w(\mathrm{d}\theta) = \sum_{S} w(S) \int_S \E_{\theta} L(\theta, \dr(X)) d\pi_S(\theta)
\end{equation*}
where the sum above is over all hypercubes $S$ in the partition. This implies therefore that
\begin{equation*}
  R_{\rm Bayes}(w, L; \Theta) \geq \sum_{S} w(S) R_{\rm Bayes}(\pi_S, L; S).
\end{equation*}
The proof will therefore be completed if we show that
\begin{equation}\label{rsp}
  R_{\rm Bayes}(\pi_S, L; S) \geq \frac{1}{2} e^{-2p} 8^{-p/d} \delta^p V^{-p/d} \int_S \left(\frac{1}{r_{\delta}(\theta)}  \right)^{p/d} \pi_S(d\theta)
\end{equation}
for every fixed hypercube $S$. So let us fix $S$ and, for notational simplicity, let $\pi := \pi_S$. We will
use~\eqref{eq:I_bayes_KL} to prove a lower bound on $R_{\rm Bayes}(\pi_S, L; S)$. Note first that
\begin{multline} \label{eq:fano_mutual}
  \inf_{Q} \int_S D(P_{\theta} || Q) \pi(\mathrm{d}\theta)  \leq   \int_S  \int_S D(P_{\theta} || P_{\theta'}) \pi(\mathrm{d}\theta) \pi(\mathrm{d}\theta')   \\
   \leq   A \max_{\theta \in S, \theta' \in S} \|\theta-\theta'\|_2^2      \leq   A d \delta^2 =: \Iu .
\end{multline}
Also, letting $f_w^{\max}$ and $f_w^{\min}$ be the maximum and minimum values of $f_w$ in $S$, we have
\begin{eqnarray*}
  \sup_{a \in S } \pi(B_t(a, L)) \leq \frac{f_w^{\max}}{w(S)} \text{Vol}(B_t(a, L ))   \leq \frac{f_w^{\max} V t^{d/p}}{f_w^{\min} \delta^d} .
\end{eqnarray*}
Let $\widetilde \theta$ be an arbitrary point in the set $S$. Since $S$ has diameter $\sqrt{d} \delta$, the set $\{\theta: \|\theta-\widetilde \theta\|_2 \leq \sqrt{d} \delta \}$
contains $S$.  We obtain from the definition of $r_\delta(\theta)$
that $f_w^{max}/f_w^{min} \leq r_{\delta}(\tilde{\theta})$ so that
\begin{equation*}
  \sup_{a \in S } \pi(B_t(a, L)) \leq r_{\delta}(\tilde{\theta}) V \delta^{-d} t^{d/p}.
\end{equation*}
Thus, by~\eqref{eq:fano_mutual}, the choice
\begin{eqnarray*}
  t= e^{-2p A \delta^2} \delta^p \left( \frac{1}{8 V r_{\delta}(\widetilde \theta)}\right)^{p/d},
\end{eqnarray*}
leads to $\sup_{a \in S} \pi(B_t(a, L)) < \frac{1}{4} e^{-2\Iu}$. Employing \eqref{eq:I_bayes_KL}, we deduce
\begin{equation*}
  R_{\rm Bayes}(\pi, L; S) \geq \frac{1}{2} e^{-2p A \delta^2} \delta^p \left( \frac{1}{8 V r_{\delta}(\widetilde \theta)}\right)^{p/d} \geq \frac{1}{2}e^{-2p} \delta^p \left( \frac{1}{8 V  r_{\delta}(\widetilde \theta)}\right)^{p/d}
\end{equation*}
where we used the fact that $\delta^2 \leq 1/A$. Because $\tilde{\theta} \in S$ is arbitrary, we can write
\begin{eqnarray*}
    R_{\rm Bayes}(\pi, L; S) & \geq &  \frac{1}{2} e^{-2p} \delta^p (8V)^{-p/d} \sup_{\tilde \theta\in S} \left( \frac{1}{r_{\delta}(\tilde \theta)}\right)^{p/d} \\
    & \geq & \frac{1}{2} e^{-2p} \delta^p (8V)^{-p/d} \int_{S} \left(\frac{1}{r_{\delta}(\theta)}\right)^{p/d} \pi(\mathrm{d}\theta).
\end{eqnarray*}
This proves~\eqref{rsp}.

\section{More Examples on Bayes Risk Lower Bounds}
\label{sec:Bayes_Example}

In this section, we provide more examples on the applications of derived Bayes risk lower bound in Theorem \ref{bwl} and Corollary \ref{cor:bwl}. For the clarity of the presentation, in each example, we will first present the Bayes risk lower bound and then provide the proof.

\subsection{Generalized Linear Model}

Fix $d \geq 1$ and let $\Theta = \ac = \R^d$ with $L(\theta, a) = \|\theta - a\|_2^p$ for a fixed $p > 0$. Also fix $n \geq 1$ and an $n \times d$ matrix $X$ whose rows are written as $x_1^T, \dots, x_n^T$. As in the last example, $\lambda_{\max}$ denotes the maximum eigenvalue of $X^T X/n$.

For $\theta \in \Theta$, let $P_{\theta}$ denote the joint distribution of independent random variables $Y_1, \dots, Y_n$ where $Y_i$ has the density
\begin{equation}\label{eq:glm_density}
  \exp \left[\frac{y \beta_i - b(\beta_i)}{a(\phi)}  + c(y, \phi) \right] \qt{for $y \in \R$}
\end{equation}
with $\beta_i = x_i^T \theta$ for $i = 1, \dots, n$. The parameter $\phi$ is taken to be a constant and the functions $a(\cdot), c(\cdot, \cdot)$ and $b(\cdot)$ are assumed to be known. We assume the existence of a constant $K > 0$ such that $b''(\beta) \leq K$ for all $\beta$ where $b''(\cdot)$ is the second derivative of $b(\cdot)$. This assumption indeed holds for many generalized linear models (e.g., binomial, Gaussian) and we will discuss the case (i.e., Poisson) where this assumption fails at the end of this example.

Let $w$ denote the Gaussian prior with mean zero and covariance matrix $\tau^2 I_d$. Using Corollary~\ref{nd}, we can prove that
\begin{equation}\label{mee}
  R_{\rm Bayes}(w, L; \Theta) \geq C \left[ d \min \left(\frac{a(\phi)}{n K }, \tau^2  \right) \right]^{p/2}
\end{equation}
for a constant $C$ that depends only on $p$. Let us illustrate this lower bound by considering a simple case of $p=2$. We note that the term $ \frac{d a(\phi)}{n K}$ is the well-known minimax risk of generalized linear model under  the squared loss. The parameter $\tau$ characterizes the strength of the prior information. In fact, since $\tau^2 I$ is the variance of the Gaussian prior distribution, a small value of $\tau$ provides strong prior information that each $\theta_j$ should be concentrated around $0$. When $\tau$ is large, i.e., with less prior information, the lower bound of the Bayes risk in~\eqref{mee} is the same as the minimax risk up to a constant factor. On the other hand, when $\tau$ is small, i.e., with strong prior information, the lower bound of the Bayes risk becomes $d \tau^2$, which is smaller than the minimax risk.

The proof of~\eqref{mee} will involve Corollary~\ref{nd} for which we need to determine $A, V$ and $r_{\delta}(\theta)$. As before, it is easy to check that $V = \mathrm{Vol}(B)$. To determine $A$, fix a pair $\theta_1, \theta_2$ and, letting $\beta_i^{(j)} = x_i^T \theta_j$ for $j = 1, 2$ and $i=1, \ldots, n$, observe that
\begin{equation*}
   D(P_{\theta_1} || P_{\theta_2}) =   \frac{1}{a(\phi)} \sum_{i=1}^n \left(b'(\beta^{(1)}_{i}) \left( \beta^{(1)}_{i} -\beta^{(2)}_{i} \right) -\left(b(\beta^{(1)}_{i})- b(\beta^{(2)}_{i}) \right)  \right)
\end{equation*}
By the second order Taylor expansion of $b(\beta^{(2)}_{i})$ at the point $\beta^{(1)}_{i}$, we  obtain
\begin{equation*}
  D(P_{\theta_1} || P_{\theta_2})  =  \frac{1}{a(\phi)} \sum_{i=1}^n \frac{b''(\tilde{\beta}_i)}{2} ( \beta^{(1)}_{i} -\beta^{(2)}_{i})^2
\end{equation*}
where $\tilde{\beta}_i$ lies between $\min(\beta^{(1)}_{i}, \beta^{(2)}_{i})$ and $\max(\beta^{(1)}_{i},  \beta^{(2)}_{i})$. Now because of our assumption that $b''(\cdot) $ is bounded from above by $K$, we get
\begin{align*}
  D(P_{\theta_1} \| P_{\theta_2}) & \leq \frac{K}{2 a(\phi)} \|\beta^{(1)} - \beta^{(2)} \|_2^2 = \frac{K}{2 a(\phi)} (\theta_1 - \theta_2)^T X^T X (\theta_1 - \theta_2) \\ & \leq \frac{n K \lambda_{\max}}{2 a(\phi)} \|\theta_1 - \theta_2\|^2.
\end{align*}
We can thus take $A = n K \lambda_{\max}/(2 a(\phi))$ in Corollary~\ref{nd}. Next we control $r_{\delta}(\theta)$. For given $\theta$ and $\delta$,
\begin{eqnarray*}
  r_{\delta}(\theta)= \sup \left\{ \exp\left(-\frac{1}{2 \tau^2} \left( \|\theta_1\|_2^2 - \|\theta_2\|_2^2 \right) \right) : \|\theta_i - \theta\|_2 \leq \sqrt{d} \delta \right\}.
\end{eqnarray*}
For $\theta_1, \theta_2$ with $\|\theta_i - \theta\|_2 \leq \sqrt{d} \delta,\; i = 1, 2$, we have
\begin{eqnarray*}
  \left|  \|\theta_1\|_2^2 - \|\theta_2\|_2^2  \right|  & = & \left|  \|\theta_1-\theta\|_2^2 +2 \theta^T(\theta_1-\theta) - \|\theta_2-\theta\|_2^2- 2 \theta^T(\theta_2-\theta)  \right| \\
  & \leq & \left|  \|\theta_1-\theta\|_2^2  - \|\theta_2-\theta\|_2^2 \right| + 2\| \theta\|_2 \left(\|\theta_1-\theta\|_2+\|\theta_2-\theta\|_2  \right) \\
  & \leq &  d \delta^2 + 4 \sqrt{d} \delta \|\theta\|_2.
\end{eqnarray*}
As a result $r_{\delta}(\theta)^{-p/d} \geq \exp(-p\delta^2/(2 \tau^2)) \exp(-2p\delta \|\theta\|_2/(\tau^2 \sqrt{d}))$ and hence
\begin{eqnarray*}
  \int_{\Theta} \left(\frac{1}{r_{\delta}(\theta)}\right)^{p/d} w(\mathrm{d}\theta) &\geq & \exp \left(  -\frac{p \delta^2}{2 \tau^2} \right) \int_{\Theta} \exp\left(-\frac{2p\delta }{\tau} \frac{\|\theta\|_2}{\tau \sqrt{d}} \right) w(\mathrm{d}\theta)\\
& \geq & \exp \left(  -\frac{p \delta^2}{2 \tau^2} - \frac{4 p \delta}{\tau} \right)  \int_{\Theta} \mathbb{I} \left\{\|\theta\|_2 < 2 \tau \sqrt{d} \right\} w(\mathrm{d}\theta).
\end{eqnarray*}
By Chebyshev's inequality, we have
\begin{equation}\label{eq:Gauss_con}
  \int_{\Theta}\mathbb{I} \left\{\|\theta\|_2 \geq 2 \tau \sqrt{d} \right\} w(\mathrm{d}\theta) \leq \frac{1}{4 \tau^2 d} \int_{\Theta} \|\theta\|_2^2 w(\mathrm{d}\theta) = \frac{1}{4}.
\end{equation}
Consequently,
\begin{eqnarray}
\int_{\Theta} \left(\frac{1}{r_{\delta}(\theta)}\right)^{p/d} w(\mathrm{d}\theta)\geq \frac{3}{4}\exp \left(  -\left( \frac{p \delta^2}{2 \tau^2} +\frac{ 4p \delta}{\tau} \right)\right).
\label{eq:Bayes_log_1}
\end{eqnarray}
Corollary~\ref{nd} therefore gives
\begin{equation*}
  R_{\rm Bayes}(w, L; \Theta) \geq \frac{3}{8} e^{-2p} (8 V)^{-p/d} \delta^p \exp \left(-\frac{p\delta^2}{2 \tau^2} - \frac{4 p \delta}{\tau} \right)  \qt{whenever $\delta^2 \leq 1/A$}.
\end{equation*}
We make the choice
\begin{equation*}
  \delta^2 := \min \left(1/A, \tau^2 \right) = \min \left(\frac{2 a(\phi)}{n K \lambda_{\max}}, \tau^2 \right)
\end{equation*}
which implies that the exponential term in the right hand side of~\eqref{eq:Bayes_log_1} is bounded from below by $\exp(-9p/2)$. We thus have
\begin{equation*}
  R_{\rm Bayes}(w, L; \Theta) \geq \frac{3}{8} e^{-13p/2} (8 V)^{-p/d} \left[\min \left(\frac{2 a(\phi)}{n K \lambda_{\max}}, \tau^2 \right)  \right]^{p/2}.
\end{equation*}
The inequality~\eqref{mee} now follows because $V^{1/d} \asymp d^{-1/2}$.

The assumption that $b''(\beta) \leq K$ which was used for the proof of~\eqref{mee} holds  under some widely used densities of $Y_i$ in \eqref{eq:glm_density}. For Gaussian distribution in \eqref{eq:glm_density}, we have $b(\beta)=\frac{\beta^2}{2}$ so that $b''(\beta)=1$ for $\beta \in \mathbb{R}$. For binomial distribution, $b(\beta)= \log(1+\exp(\beta))$ and $b''(\beta)=\frac{\exp(\beta)}{(1+\exp(\beta))^2} \leq \frac{1}{4}$ for all $\beta \in \mathbb{R}$. However, for Poisson distribution, $b(\beta)=\exp(\beta)$ and thus $b''(\beta) =\exp(\beta)$ is unbounded on $\mathbb{R}$. To address this issue, we restrict the prior to the subset $\widetilde{\Theta}=\{ \theta \in \Theta: \|\theta\|_2 \leq 2 \tau \sqrt{d} \}$ and define the re-scaled prior distribution $\pi$ on $\widetilde{\Theta}$ as $\pi(S)=w(S)/w(\widetilde{\Theta})$ for any measurable set $S \subseteq  \widetilde{\Theta}$. Let $B= \max_{i=1,\ldots, n} \|x_i\|_2$. For any $\beta=x_i^T \theta$ for some $i=1, \ldots, n$ and $\theta \in \widetilde{\Theta}$, we have $b''(\beta) \leq \exp(2 \tau \sqrt{d}  B):=K$. We note that such a restriction of the parameter space will not affect the order of the Bayes risk lower bound. In particular, since now $b''(\beta) \leq K$ when $\theta \in \widetilde{\Theta}$, applying the same argument, we obtain the lower bound on $R_{\rm Bayes}(\pi, L; \widetilde{\Theta})$.  By \eqref{eq:Gauss_con}, we have $w(\widetilde{\Theta})\geq 3/4$ and the lower bound on $R_{\rm Bayes}( w, L ; \Theta)$ can be easily established by noticing that $R_{\rm Bayes}( w, L ; \Theta)  \geq  w(\widetilde{\Theta})R_{\rm Bayes}( \pi, L ; \widetilde{\Theta}) \geq \frac{3}{4} R_{\rm Bayes}( \pi, L ; \widetilde{\Theta}).$

\subsection{Spiked Covariance Model}\label{sec:vee}

Fix $\Theta = \ac = B$ where $B$ is the unit Euclidean closed ball of radius one and let $L(\theta, a) := \|\theta - a\|_2^p$ for a fixed $p > 0$. Also fix $n \geq d/2$. For $\theta \in \Theta$, let $P_{\theta}$ denote the joint distribution of independent and identically distributed observations $X_1, \dots, X_n$ satisfying the Gaussian distribution with zero mean and covariance matrix $\Sigma_{\theta} := I_d + \theta \theta^T$. This is the problem of estimating the principal component for a rank-one spiked covariance model. Let $w$ denote the uniform distribution on $B$. We shall prove that
\begin{equation}\label{vee}
  R_{\rm Bayes}(w, L; \Theta) \geq C \left[\min \left(\frac{1}{2}, \frac{d}{n} \right) \right]^{p/2}
\end{equation}
where $C$ only depends on $p$.

The proof is based on the application of~\eqref{eq:main_bwl}  with $f(x) = x^2 - 1$, i.e., on inequality~\eqref{eq:I_bayes_chi}.

For this, we need to bound the term $\sup_{a \in \ac} w(B_t(a, L))$ and the $f$-informativity corresponding to the chi-squared divergence. It is easy to see that $\sup_{a \in \ac} w(B_t(a, L)) \leq t^{d/p}$.

For the $f$-informativity, we will use the bound~\eqref{eq:chi_dist_upper} with $\alpha = 2$ which requires bounding $M_{\chi^2}(\epsilon, \Theta)$. According to \cite[Theorem 4.6.1]{Aditya:PhDThesis}, for two Gaussian distributions with mean zero and covariance matrices $\Sigma_1$ and $\Sigma_2$ such that $2 \Sigma_1^{-1}  - \Sigma_2^{-1}$ is positive definite and $\|\Sigma_1 - \Sigma_2\|_F^2 \leq \frac{1}{2} \lambda^2_{\min}(\Sigma_2)$,  we have
  \begin{eqnarray}
    \chi^2\left(N_d(0, \Sigma_1) || N_d(0, \Sigma_2) \right) \leq \exp\left(\frac{\|\Sigma_1 - \Sigma_2\|^2_F}{\lambda_{\min}(\Sigma_2)^2}\right) -1.
    \label{eq:Gaussian_Chi_0}
  \end{eqnarray}
Here $\|\cdot\|_F$ denotes the Frobenius norm defined as $\|A\|_F^2 := \sum_{i, j} a_{ij}^2$ where $A = (a_{ij})$ and $\lambda_{\min}$ denotes the smallest eigenvalue.

Using this result, we get that for $\theta_1, \theta_2 \in \Theta$ (note that $\lambda_{\min}(\Sigma_{\theta}) = 1$ for all $\theta$),
   \begin{eqnarray}
     \chi^2\left(P_{\theta_1} || P_{\theta_2} \right) \leq\exp\left(n\|\Sigma_{\theta_1} - \Sigma_{\theta_2}\|^2_F\right) -1,
     \label{eq:Gaussian_Chi}
   \end{eqnarray}
provided
   \begin{equation}\label{acon}
2 \Sigma_{\theta_1}^{-1} - \Sigma_{\theta_2}^{-1} \text{ is positive definite and }  \|\Sigma_{\theta_1} - \Sigma_{\theta_2}\|_F^2 \leq 1/2.
   \end{equation}
In the sequel, whenever we employ~\eqref{eq:Gaussian_Chi}, the conditions~\eqref{acon} hold. But, for ease of presentation, instead of verifying~\eqref{acon} for every application of~\eqref{eq:Gaussian_Chi}, we will simply assume~\eqref{eq:Gaussian_Chi} and verify the necessary conditions at the end of the proof.  Assuming~\eqref{eq:Gaussian_Chi}, we see that $\chi^2(P_{\theta_1}\|P_{\theta_2}) \leq \epsilon^2$ provided $\|\Sigma_{\theta_1} - \Sigma_{\theta_2}\|_F^2 \leq \log(1 + \epsilon^2)/n$. Now for $\theta_1, \theta_2 \in \Theta$
\begin{multline*}
  \|\Sigma_{\theta_1} - \Sigma_{\theta_2}\|_F^2  = \|\theta_1 \theta_1^T - \theta_2 \theta_2^T\|_F^2 = \|\theta_1 \theta_1^T - \theta_1 \theta_2^T + \theta_1 \theta_2^T - \theta_2 \theta_2^T\|_F^2 \\
\leq 2 \left(\|\theta_1\|_2^2 + \|\theta_2\|_2^2 \right) \|\theta_1 - \theta_2\|_2^2 \leq 4 \|\theta_1 - \theta_2\|_2^2.
\end{multline*}
It follows therefore that the $\epsilon^2$-covering number in the chi-squared divergence can be bounded from above by the $\sqrt{\log(1 + \epsilon^2)}/(2\sqrt{n})$-covering number of $B$ under the usual Euclidean norm. Consequently
\begin{equation*}
  M_{\chi^2}(\epsilon, \Theta) \leq \left(\frac{36 n}{\log (1 + \epsilon^2)}\right)^{d/2} \qt{provided \; $\log(1 + \epsilon^2) \leq 4n$}.
\end{equation*}
We now set $\epsilon$ to satisfy $\log(1+\epsilon^2)=\min \left(n/2, d\right)$ so that Corollary~\ref{cor:chi_dist_upper} gives
\begin{eqnarray*}
   I_{\chi^2}(w, \Ps) &\leq & M_{\chi^2}(\epsilon)(1+\epsilon^2) - 1\\
               & \leq & \exp \left(\min\left(\frac{n}{2}, d\right) \right)  \left[36 \max\left(2, \frac{n}{d} \right) \right]^{d/2} -1 =: \Iu.
\end{eqnarray*}
It follows that $ \sup_{a \in \ac} w(B_t(a, L)) < \frac{1}{4} (1 + \Iu)^{-1}$ provided $t = (4(1 + \Iu))^{-p/d}$. Inequality~\eqref{eq:I_bayes_chi} then proves
\begin{equation*}
  R_{\rm Bayes}(w, L; \Theta) \geq \frac{1}{2} \left(4(1 + \Iu) \right)^{-p/d} \geq \frac{1}{2} (24 e)^{-p} \left[\min \left(\frac{1}{2}, \frac{d}{n} \right) \right]^{p/2}
\end{equation*}
which implies~\eqref{vee}.

It remains to justify the conditions~\eqref{acon} when we used~\eqref{eq:Gaussian_Chi}.  It should be clear that for this, we only need to verify~\eqref{acon} when
\begin{equation}\label{didi}
  \|\Sigma_{\theta_1} - \Sigma_{\theta_2}\|_F^2 \leq \frac{\log(1+\epsilon^2)}{n} = \min \left( \frac{1}{2}, \frac{d}{n} \right).
\end{equation}
We only need to check that $2 \Sigma_{\theta_1}^{-1} - \Sigma_{\theta_2}^{-1}$ is positive definite under the above condition. For this, observe that by Weyl's inequality,
 \begin{eqnarray*}
     \lambda_{\min}\left(2 \Sigma_{\theta_1}^{-1} -\Sigma_{\theta_2}^{-1}\right) \geq \lambda_{\min}\left( 2 \Sigma_{\theta_1}^{-1}\right) - \lambda_{\max}\left(\Sigma_{\theta_2}^{-1}\right) = \frac{2}{1+\|\theta_1\|_2^2} - 1 \geq 0.
 \end{eqnarray*}
This implies that $2 \Sigma_{\theta_1}^{-1} - \Sigma_{\theta_2}^{-1}$ is positive semi-definite
and $\|\theta_1\|_2=1$ is a necessary condition for $\lambda_{\min}\left(2 \Sigma_{\theta_1}^{-1} -\Sigma{\theta_2}^{-1}\right)=0$. Under the condition that $\|\theta_1\|_2=1$, by Sherman-Morrison formula,
 \begin{eqnarray*}
    2\Sigma_{\theta_2}^{-1} - \Sigma_{\theta_1}^{-1}=I_d - \theta_1 \theta_1^T+\frac{\theta_2 \theta_2^T}{1+\theta_2^T \theta_2}.
 \end{eqnarray*}
 It is then easy to check that  $\lambda_{\min}\left(2 \Sigma_{\theta_1}^{-1} -\Sigma_{\theta_2}^{-1}\right)=0$ only if $\theta_2$ is orthogonal to $\theta_1$. However, when $\|\theta_1\|_2=1$ and $\theta_2$ is orthogonal to $\theta_1$,  $\|\Sigma_{\theta_1} - \Sigma_{\theta_2}\|^2_F= \|\theta_1\|_2^2+ \|\theta_2\|_2^2 > 1$,
 which contradicts \eqref{didi}. Therefore $2 \Sigma_{\theta_1}^{-1} - \Sigma_{\theta_2}^{-1}$ is positive definite and this completes the proof of~\eqref{vee}.

\subsection{Gaussian Model with General Loss}
\label{sec:Gaussian}
In this example, we consider Gaussian location model with continuous
prior with a bounded Lebesgue density and general loss
functions. Here, we do not specify the form of the prior and loss. We
only present this example to illustrate applications of Theorem
\ref{bwl} and Corollary \ref{cor:bwl}. Our main bound is inequality
\eqref{hok}. This bound however might be suboptimal for specific
priors $w$ because we do not use knowledge about the specific form of
$w$. However, when the specific form of $w$ is available, the argument
can often be easily modified to improve inequality \eqref{hok}. We
provide examples of this at the end of this subsection.

\subsubsection{Gaussian Model with Squared Loss}

Fix $d \geq 1$. Suppose $\Theta = \ac = \R^d$ and let $L(\theta, a) := \|\theta - a\|_2^2$ where $\|\cdot\|_2$ is the usual Euclidean norm on $\R^d$. For each $\theta \in \R^d$, let $P_{\theta}$ denote the Gaussian distribution with mean $\theta$ and covariance matrix $\sigma^2 I_d$  ($\sigma^2 > 0$ is a constant). For every prior $w$ on $\R^d$ with a Lebesgue density bounded by $W > 0$, we have
\begin{equation}\label{hok}
  R_{\rm Bayes}(w, L; \Theta) \gtrsim \frac{d \sigma^4 W^{-2/d}}{(\sigma^2 + V)^2}
\end{equation}
where
\begin{equation}\label{eq:V}
    V := \min_{s \in \R^d} \int_{\Theta} \frac{1}{d} \sum_{i=1}^d (\theta_i- s_i)^2 w(\mathrm{d} \theta).
\end{equation}

To prove~\eqref{hok}, we shall apply~\eqref{eq:main_bwl} with $f(x) = x \log x$, i.e., we apply~\eqref{eq:I_bayes_KL}. The resulting $f$-informativity (a.k.a mutual information) can be bounded in the following way. Because $I(w, \Ps) \leq \int D(P_{\theta}\|Q) w(\mathrm{d}\theta)$ for every $Q$. In particular, we take $Q$ to be the Gaussian  distribution with mean $t$ and covariance matrix                                                                                                                                                                                                                                                                                                                              $(\sigma^2 + V) I_d$, where $t= \argmin_{s \in \R^d} \int_{\Theta} \frac{1}{d} \sum_{i=1}^d (\theta_i- s_i)^2 w(\mathrm{d} \theta)$, i.e., $t_i =\int_{\Theta} \theta_i w(\mathrm{d} \theta)$ is ``center" of the prior.
Then, we obtain
\begin{equation*}
  I(w, \Ps)  \leq \int_\Theta D\left( N\left( \theta, \sigma^2 I_d \right)  || N\left(t, \left(\sigma^2+V\right) I_d \right)  \right) w(\mathrm{d}\theta).
\end{equation*}
Using the standard formula for the KL divergence between two Gaussians, we deduce that
\begin{equation*}
  I(w, \Ps) \leq \frac{1}{2} \int_{\Theta} \left[\frac{\sum_{i=1}^d ((\theta_i-t_i)^2 - V)}{\sigma^2 + V} + d \log \frac{\sigma^2 + V}{\sigma^2} \right] w(\mathrm{d}\theta)
\end{equation*}
which by \eqref{eq:V} implies that
\begin{equation}\label{ibi}
  I(w, \Ps) \leq \frac{d}{2} \log \frac{\sigma^2 + V}{\sigma^2}.
\end{equation}
Let $\Iu$ denote the right hand side above. To apply~\eqref{eq:I_bayes_KL}, we also need an  upper bound on $\sup_{a \in A} w\left(B_t(a,L)\right)$. Because of the assumption that the Lebesgue density of $w$ is bounded from above by $W$, we get
\begin{equation}\label{psu1}
   \sup_{a \in A} w\left(B_t(a,L) \right) \leq  W t^{d/2} \mathrm{Vol}(B)
\end{equation}
where $B$ is the Euclidean ball with unit radius. Thus the choice
\begin{equation*}
  t = c W^{-2/d} \mathrm{Vol}(B)^{-2/d} \frac{\sigma^4 }{(\sigma^2 + V)^2},
\end{equation*}
for a small enough universal positive constant $c$, ensures $\sup_{a \in A} w\{B_t(a)\} < \frac{1}{4} e^{-2 \Iu}$ (recall that $\Iu$ is the right hand side of\eqref{ibi}).  Consequently, inequality~\eqref{eq:I_bayes_KL} implies that $R_{\rm Bayes} \geq t/2$. The proof of~\eqref{hok} is now completed using the standard fact: $\mathrm{Vol}(B)^{1/d} \asymp d^{-1/2}$.

However, since the form of the prior $w$ is unspecified in this
example, the simple upper bound on $ \sup_{a \in A} w\left(B_t(a,L)
\right) $ in \eqref{psu1} could be loose. But this can be easily fixed
when the concrete form of the prior is available. For example,  for a
spiked model with a large $W$ (see an example of mixture prior in
Remark \ref{rem:Bayes_decomp} in the main text), the lower bound in
\eqref{hok} could be sub-optimal but can be easily tightened using the
proposed chaining technique in Remark \ref{rem:Bayes_decomp} in the
main text. For another example, let $w$  be the uniform
prior on the hyper-rectangle $H = [-\epsilon,\epsilon]\times
[-1,1]^{d-1}$ for some very small $\epsilon$. Here
inequality~\eqref{psu1} is equivalent to
  \[
   \sup_{a \in A} w\left(B_t(a,L) \right) \leq  W t^{d/2} \mathrm{Vol}(B).
\]
When $\epsilon\to 0$, we have $W\to \infty$ so that the upper bound is fairly loose. However, since $H$ is the support of $w$, we can also use the following upper bound:
\[
   \sup_{a \in A} w\left(B_t(a,L) \right) \leq  W t^{d/2} \mathrm{Vol}(B\cap H).
\]
When $\epsilon\to 0$, we have $W\to \infty$ but $\mathrm{Vol}(B\cap
H)\to 0$. In particular, the product limit $\lim_{\epsilon\to 0}
W\mathrm{Vol}(B\cap H)\to 0$ is finite. It converges to the maximum
value of $w\left(B_t(a,L) \right)$ where $w$ is restricted in a
$(d-1)$-dimensional subspace of $\R^d$. Once we replace
inequality~\eqref{psu1} by the above upper bound, the associated Bayes
risk lower bound will be tight.

\subsubsection{Gaussian Model with General Loss}

Consider the same setup as in the previous example but now allow the loss function to be $L(\theta, a) = \|\theta - a\|^2$ for an arbitrary norm $\|\cdot\|$ (not necessarily the Euclidean norm) on $\R^d$. In this case, we obtain the following Bayes risk lower bound:
 \begin{eqnarray}
   R_{\rm Bayes}(w, L; \Theta) \gtrsim \frac{\sigma^4 W^{-2/d}}{(\sigma^2 + V)^2} \frac{d^2}{(\E \|Z\|_* )^2}.
   \label{eq:Gaussian_location_norm}
 \end{eqnarray}
where $Z$ is a standard Gaussian vector and $\|\cdot\|_*$ is the dual norm corresponding to $\|\cdot\|$ defined by $\|x\|_* := \sup \{\left<x, y \right> : \|y\| \leq 1\}$. The quantities $W$ and $V$ are as defined in  the previous example.

The proof of~\eqref{eq:Gaussian_location_norm} is largely similar to that of~\eqref{hok}. We use~\eqref{eq:I_bayes_KL} along with~\eqref{ibi} for controlling $I(w, \Ps)$. To control $\sup_{a \in \ac} w(B_t(a, L))$, we again use the fact that the Lebesgue density of $w$ is bounded from above by $W$ to obtain
\begin{equation}\label{psu3}
   \sup_{a \in A} w\left(B_t(a,L) \right)  \leq  W \mathrm{Vol} \left\{\theta \in \R^d : \|\theta\| < \sqrt{t} \right\}.
\end{equation}
To deal with the volume term above, we use Urysohn's inequality to obtain an upper bound in terms of the volume of the unit Euclidean unit ball $B$. The  original reference for Urysohn's inequality is~\cite{Urysohn:24} but it has been recently  used in a statistical context by~\cite{MaWu:13}. Urysohn's inequality gives
 \begin{equation}\label{psu2}
  \left(\frac{\mathrm{Vol} \left\{\theta \in \R^d : \|\theta\| < \sqrt{t} \right\}}{\mathrm{Vol}(B) } \right)^\frac{1}{d} \leq \frac{\sqrt{t}}{\sqrt{d}} \E\|Z\|_* \qt{with $Z \sim N(0, I_d)$}.
\end{equation}
Inequalities~\eqref{psu3} and~\eqref{psu2} together give
 \begin{equation*}
   \sup_{a \in A} w\left(B_t(a,L) \right) \leq W t^{d/2} \mathrm{Vol}(B) \left(\frac{\E \|Z\|_*}{\sqrt{d}} \right)^d.
 \end{equation*}
The choice
\begin{equation*}
  t = c \mathrm{Vol}(B)^{-2/d} \frac{W^{-2/d} \sigma^4}{(\sigma^2 + V)^2} \frac{d}{(\E\|Z\|_*)^2}
\end{equation*}
for a small enough universal positive constant $c$ ensures $\sup_{a \in A} w\{B_t(a)\} < \frac{1}{4} e^{-2 \Iu} $ ($\Iu$ is the right hand side of~\eqref{ibi}). The proof of~\eqref{eq:Gaussian_location_norm} is then completed by noting that $\mathrm{Vol}(B)^{1/d} \asymp d^{-1/2}$.

\section{Proof of Lemma \ref{lemma:distance} in Section \ref{sec:smoothed-analysis}}
\label{sec:distance}

Consider the $i$-th instance $x_i\in \R^d$ sampled from $N(\theta_{z_i}; I_{d\times d})$, where $z_i$ is the membership.  Note that for any $j\in [k]\backslash\{z_i\}$, the distance between $\theta_{z_i}$ and $\theta_j$ is lower bounded by $D$. We have
\begin{equation}\label{eq:dist_1}
	\ltwos{x_i - \theta_{j}}^2 - \ltwos{x_i - \theta_{z_i}}^2 =
	\ltwos{\theta_{j} - \theta_{z_i}}^2 - 2\langle \theta_{j} - \theta_{z_i}, x_i - \theta_{z_i}\rangle.
\end{equation}
The random variable $\langle \theta_{j} - \theta_{z_i}, x_i - \theta_{z_i}\rangle$ satisfies distribution $\normal(0; \ltwos{\theta_{j} - \theta_{z_i}}^2)$. Let $\Phi$ be the CDF of the standard normal distribution. Then with probability $ \Phi(\frac{\ltwos{\theta_{j} - \theta_{z_i}}-1}{2})$, we have
\begin{equation}\label{eq:dist_2}
	\langle \theta_{j} - \theta_{z_i}, x_i - \theta_{z_i}\rangle \leq
	 \ltwos{\theta_{j} - \theta_{z_i}}\cdot \frac{\ltwos{\theta_{j} - \theta_{z_i}} - 1}{2}.
\end{equation}
Combining \eqref{eq:dist_1} and \eqref{eq:dist_2}, we have
\begin{equation}\label{eq:dist_3}
	\ltwos{x_i - \theta_{j}}^2 - \ltwos{x_i - \theta_{z_i}}^2 \geq \ltwos{\theta_{j} - \theta_{z_i}}.
\end{equation}
On the other hand, the triangular inequality implies
\begin{equation}\label{eq:dist_4}
	\ltwos{x_i - \theta_{j}} + \ltwos{x_i - \theta_{z_i}} \leq 2\ltwos{x_i - \theta_{z_i}} + \ltwos{\theta_{j} - \theta_{z_i}}.
\end{equation}
The random variable $\ltwos{x_i - \theta_{z_i}}^2$ satisfies a chi-square distribution with $d$ degrees of freedom. It is upper bounded by $\beta d$ with probability at least $1 - \exp(\frac{d}{2}(1-\beta+\log \beta))$ for any $\beta > 1$~\citep{dasgupta2003elementary}. Putting \eqref{eq:dist_3} and \eqref{eq:dist_4} together, we have
\[
	\ltwos{x_i - \theta_{j}} - \ltwos{x_i - \theta_{z_i}} = \frac{\ltwos{x_i - \theta_{j}}^2 - \ltwos{x_i - \theta_{z_i}}^2}{\ltwos{x_i - \theta_{j}} + \ltwos{x_i - \theta_{z_i}}} \geq \frac{\ltwos{\theta_{j} - \theta_{z_i}}}{2\ltwos{\theta_{j} - \theta_{z_i}} + \sqrt{\beta d}} \geq \frac{3}{6+\sqrt{\beta d}}
\]
with probability at least $\Phi(\frac{D-1}{2}) - \exp(\frac{d}{2}(1-\beta+\log \beta))$. By choosing $D = c\sqrt{\log(n k/\delta)}$ and $\beta = c \log(n k/\delta)/d$ for a sufficiently large constant $c$, this probability is lower bounded by $1 - \delta/(nk)$.
Applying union bound, the inequality holds for any $(i,j)$ pair with probability at least~$1 - \delta$.

\section{Proof of Theorem \ref{theorem:sparse-regression-lower-bound} in Section \ref{sec:sparse_linear}}
\label{sec:supp_sparse}

We start with a simplified case where the random index set $K$ is given to the estimator. Knowing this information makes the problem easier, and makes the Bayes risk lower. In addition, it reduces the $d$-dimensional regression problem to a $k$-dimensional problem where a closed-form of the Bayes risk can be derived, which establishes the following lower bound:
\begin{claim}\label{claim:sparse-trivial-lower-bound}
For any $\tau > 0$, the Bayes risk is lower bounded by:
\begin{align*}
\RBayes(w,\Lest;\Theta) \geq \frac{1}{1 + \kappa_u^2  \tau^2 n/\sigma^2}\cdot k\tau^2, \quad\mbox{and}\quad \RBayes(w,\Lpre;\Theta) \geq \frac{1}{1 + \kappa_\ell^2  \tau^2 n/\sigma^2}\cdot \kappa_\ell^2 k\tau^2.
\end{align*}
\end{claim}
\noindent
See Section~\ref{sec:proof-sparse-trivial-lower-bound} for the proof.\\

For the rest of this proof, we establish stronger lower bounds using the fact that the index set $K$ is unknown.
It is easy to verify that for any random variable $X$ sampled from $N(0,1)$, the probability of $|X| \geq 1/2$ is greater than $1/2$. Consider a subset of the parameter space $\Theta$:
\begin{align}
	\Thetabar \defeq \Big\{\theta\in \Theta:~\norms{\theta}_2^2 \leq 2k\tau^2 \mbox{ and } \sum_{i=1}^d \mathbb{I}[|\theta_i| \geq \tau/2 ] \geq k/2\Big\}.
\end{align}
For a random vector $\theta$ sampled from the prior distribution $w$, the quantity $\ltwos{\theta}^2/\tau^2$  satisfies a chi-square distribution with $k$ degrees of freedom. For any $k\geq 1$, the event $\norms{\theta}_2^2 \leq 2k\tau^2$ happens with probability at least $0.84$. Given an index set $K$, for any $i\in K$ the random variable $\mathbb{I}[|\theta_i| \geq \tau/2]$ satisfies the Bernoulli distribution with  parameter  greater than $1/2$, so that the event $\sum_{i=1}^d \mathbb{I}[|\theta_i| \geq \tau/2$ happens with probability at least $1/2$. Combining these two lower bounds and applying union bound, we obtain $w(\Thetabar) \geq 1/2 - (1-0.84) > 1/4$. As a consequence, if we define a distribution $\wbar$ over the subset $\Thetabar$ by $\wbar(A) \defeq w(A\cap \Thetabar)/w(\Thetabar)$, then Remark 11 implies that
\begin{align}
	\RBayes(w,L;\Theta) \geq w(\Thetabar)\cdot \RBayes(\wbar,L;\Thetabar) \geq \frac{1}{4} \RBayes(\wbar,L;\Thetabar).
\end{align}
Hence it suffices to focus on the Bayes risk for the marginal prior $\wbar$.

Let the action space $\A \defeq \R^d$ and let the loss function be either the estimation error $\Lest$ or the prediction error $\Lpre$. In order to lower bound the Bayes risk, it suffices to bounded the chi-square informativity $I_{\chi^2}(\wbar, \mathcal{P})$ and the quantity $\sup_{a\in \A} \wbar(B_t(a,L))$, then applying Corollary~\ref{cor:bwl}. We begin with an upper bound on the chi-square informativity.

\begin{claim}\label{claim:sparse-informativity}
For any $\tau > 0$, the chi-square informativity is bounded by:
\begin{align}\label{eqn:sparse-informativity-bound}
	I_{\chi^2}(\wbar, \mathcal{P}) + 1 \leq \exp(2\kappa_u^2 \tau^2 kn/\sigma^2)
\end{align}
\end{claim}
\noindent See Section~\ref{sec:proof-sparse-informativity} for the proof.\\

Next, we upper bound the quantity $\sup_{a\in \A} \wbar(B_t(a,L))$. We begin by claiming a property of all Euclidean balls of small enough radius.

\begin{claim}\label{claim:euclidean-ball-covering-bound}
For any point $a\in \R^d$, let $B(a,r)$ be the Euclidean ball of radius $r$ centering at $a$. If $r \leq \frac{1}{8}\sqrt{k}\tau$, then there is a universal constant $c > 0$ such that
\[
	\sup_{a\in \A} {\wbar}(B(a,r)) \leq  \frac{c^k}{(d/k^2)^{k/4}}\Big(\frac{r}{\sqrt{k}\tau}\Big)^k.
\]
\end{claim}

\noindent See Section~\ref{sec:proof-sparse-euclidean-ball} for the proof.

\paragraph{Lower bound on estimation error}
For the estimation error, we obtain by Claim~\ref{claim:euclidean-ball-covering-bound} that for any $t \leq \frac{1}{64} k\tau^2$, the following upper bound holds:
\begin{align}\label{eqn:est-error-prior-bound}
	\sup_{a\in \A} \wbar(B_t(a,\Lest)) = \sup_{a\in \A} \wbar(B(a,\sqrt{t})) \leq \frac{c^k}{(d/k^2)^{k/4}}\Big(\frac{\sqrt{t}}{\sqrt{k}\tau}\Big)^k.
\end{align}
Combining Claim~\ref{claim:euclidean-ball-covering-bound} with  inequality~\eqref{eqn:est-error-prior-bound}, and applying inequality \eqref{eq:I_bayes_chi} in Corollary~\ref{cor:bwl}, we obtain the lower bound:
\begin{align*}
	\RBayes(\wbar,\Lest;\Thetabar) \geq \frac{1}{2} \sup \left\{0 < t \leq \frac{k \tau^2}{64} : \Big(\frac{\sqrt{t}}{\sqrt{k}\tau}\Big)^k \leq \frac{(d/k^2)^{k/4}}{c^k}\cdot  \frac{1}{4}\exp(- 2\kappa_u^2 \tau^2 kn/\sigma^2)  \right\}.
\end{align*}
The right-hand side is lower bounded by any scalar $t$ satisfying:
\[
	t \leq \frac{1}{64} k\tau^2 \quad\mbox{and}\quad \frac{\sqrt{t}}{\sqrt{k}\tau} \leq \frac{(d/k^2)^{1/4}}{c}\cdot  \frac{1}{4^{1/k}}\exp(- 2\kappa_u^2 \tau^2 n/\sigma^2)
\]
It implies that for some universal constant $c' > 0$, we have:
\begin{align}
	\RBayes(\wbar,\Lest;\Thetabar) &\geq c'\,k\tau^2\,\min\Big\{1, \exp(\frac{1}{2}\log(d/k^2) - 4\kappa_u^2 \tau^2 n / \sigma^2)
\Big\}\nonumber\\
&= c'\,k\tau^2\,\exp\Big(\min\Big\{0, \frac{1}{2}\log(d/k^2) - \frac{4\kappa_u^2 \tau^2 n}{\sigma^2}\Big\}\Big)\nonumber\\
&= c'\,k\tau^2\,\exp\Big( - \frac{4\kappa_u^2 n}{\sigma^2} \Big[\tau^2 - \frac{\sigma^2 \log(d/k^2)}{8\kappa_u^2 n} \Big]_+\Big)\nonumber\\
&\geq  c'\,k\tau^2\,\exp\Big( - \frac{4\kappa_u^2 n}{\sigma^2} \Big[\tau^2 - \frac{\sigma^2 \log (d/k)}{16 \kappa_u^2 n} \Big]_+\Big),\label{eqn:lest-refined-bound}
\end{align}
where the last inequality uses the assumption $d > k^3$ and its implication $\log(d/{k^2}) >\frac{1}{2}\log(d/k)$.

Combining inequality~\eqref{eqn:lest-refined-bound} with Claim~\ref{claim:sparse-trivial-lower-bound} yields the lower bound:
\[
	\RBayes(w,\Lest;\Theta) \geq  c'\,k\tau^2 \max\Big\{\frac{1}{1 + \kappa_u^2  \tau^2 n/\sigma^2}, \exp\Big( - \frac{4\kappa_u^2 n}{\sigma^2} \Big[\tau^2 - \frac{\sigma^2 \log(d/k)}{16\kappa_u^2 n} \Big]_+\Big)\Big\},
\]
which completes the proof.

\paragraph{Lower bound on prediction error}

For the prediction error, we consider an arbitrary vector $a\in \R^d$ and an arbitrary scalar $t$ satisfying $\sqrt{t} \leq \frac{\kappa_\ell}{16}\sqrt{k}\tau$. Let $\theta'$ be the vector in $\Thetabar$ which minimizes the term $\frac{1}{n}\ltwos{X(a-\theta')}^2$. If the inequality $\frac{1}{n}\ltwos{X(a-\theta')}^2 > t$ is true, then we have
\begin{align}\label{eqn:prediction-error-prior-bound-1}
	\sup_{a\in \A} \wbar(B_t(a,\Lpre)) = 0.
\end{align}
Otherwise, we assume that $\frac{1}{n}\ltwos{X(a-\theta')}^2 \leq t$. Then for any vector $\theta\in \Thetabar$ satisfying $\frac{1}{n}\ltwos{X(\theta-a)}^2 \leq t$, we have the upper bound
\[
\frac{1}{n}\ltwos{X(\theta-\theta')}^2 \leq (n^{-1/2}\ltwos{X(\theta-a)} + n^{-1/2}\ltwos{X(a-\theta')})^2 \leq (\sqrt{t} + \sqrt{t})^2  \leq 4t.
\]
It means that $B_t(a,\Lpre)\subseteq B_{4t}(\theta',\Lpre)$. Since the vector $\theta'$ is $k$-sparse, the the sparse eigenvalue condition implies that for any vector $\theta\in \Thetabar$, if $\Lpre(\theta, \theta') \leq 4t$, then $\ltwos{\theta-\theta'} \leq \frac{2\sqrt{t}}{\kappa_\ell}$, so that $B_{4t}(\theta',\Lpre) \subseteq B(\theta', \frac{2\sqrt{t}}{\kappa_\ell})$. Using  Claim~\ref{claim:euclidean-ball-covering-bound}, we have
\begin{align}\label{eqn:prediction-error-prior-bound-2}
	\sup_{a\in \A} \wbar(B_t(a,\Lpre)) \leq \sup_{a\in \A} \wbar(B(\theta', \frac{2\sqrt{t}}{\kappa_\ell})) \leq \sup_{a\in \A} \wbar(B_t(a,\Lest)) \leq \frac{c^k}{(d/k^2)^{k/4}}\Big(\frac{2\sqrt{t}}{\kappa_\ell \sqrt{k}\tau}\Big)^k.
\end{align}
Combining equation~\eqref{eqn:prediction-error-prior-bound-1} and inequality~\eqref{eqn:prediction-error-prior-bound-2} we obtain
\begin{align}\label{eqn:prediction-error-prior-bound}
	\sup_{a\in \A} \wbar(B_t(a,\Lpre)) \leq \frac{c^k}{(d/k^2)^{k/4}}\Big(\frac{2\sqrt{t}}{\kappa_\ell \sqrt{k}\tau}\Big)^k\quad\mbox{for any }
	\sqrt{t} \leq \frac{\kappa_\ell}{16}\sqrt{k}\tau.
\end{align}
Comparing inequalities~\eqref{eqn:est-error-prior-bound} and~\eqref{eqn:prediction-error-prior-bound}, we find that they differ by a factor of $(2/\kappa_\ell)^k$. Thus, following the same steps for deriving inequality~\eqref{eqn:lest-refined-bound}, we can find a universal constant $c'' > 0$ such that:
\begin{align}\label{eqn:lpre-refined-bound}
\RBayes(\wbar,\Lpre;\Thetabar) \geq  c''\,\kappa_\ell^2 k\tau^2\,\exp\Big( - \frac{4\kappa_u^2 n}{\sigma^2} \Big[\tau^2 - \frac{\sigma^2 \log (d/k)}{16 \kappa_u^2 n} \Big]_+\Big)
\end{align}
Combining inequality~\eqref{eqn:lpre-refined-bound} with
Claim~\ref{claim:sparse-trivial-lower-bound} yields:
\[
	\RBayes(w,\Lpre;\Theta) \geq  c''\, \kappa_\ell^2  k\tau^2 \max\Big\{\frac{1}{1 + \kappa_\ell^2  \tau^2 n/\sigma^2}, \exp\Big( - \frac{4\kappa_u^2 n}{\sigma^2} \Big[\tau^2 - \frac{\sigma^2 \log(d/k)}{16\kappa_u^2 n} \Big]_+\Big)\Big\},
\]
which completes the proof.

\subsection{Proof of Claim~\ref{claim:sparse-trivial-lower-bound}}
\label{sec:proof-sparse-trivial-lower-bound}

The Bayes risk of the original problem is lower bounded by that of the following simplified problem: estimating $\theta$ when the index set $K$ is known, and without loss of generality, we assume that $K = [k]$.  For this case, let $X'$ be the submatrix consisting of the first $k$ columns of matrix $X$, and let $\theta'$ be the subvectors consisting of the first $k$ coordinate of vectors $\theta$. Given the response vector $y$, the posterior distribution of $\theta'$ is equal to
\[
	p(\theta'|y) \propto p(\theta')p(y|\theta') = N(\theta';0,\tau^2 I)\, N(y;X\theta', \sigma^2 I)
	\propto N\Big(\theta'; \Sigma^{-1} (X')^\top y, \sigma^2 \Sigma^{-1} \Big),
\]
where $\Sigma \defeq (X')^\top X' + \frac{\sigma^2}{\tau^2}I$ is a shorthand notation. As a consequence, the Bayes estimator $\thetahat$ is given by $\thetahat_K = \Sigma^{-1} (X')^\top y$ and $\thetahat_{-K} = 0$. The Bayes risk on the estimation error is lower bounded by:
\[
	\RBayes(w,\Lest;\Theta) = \E[\ltwos{\Sigma^{-1} (X')^\top y - \theta'}^2] = \sigma^2 \tr(\Sigma^{-1}) \geq \frac{\sigma^2}{\kappa_u^2  \tau^2 n + \sigma^2}\cdot k\tau^2,
\]
where the last inequality uses the sparse eigenvalue condition --- it guarantees  that all eigenvalues of the matrix $\Sigma$ are less  than or equal to $n\kappa_u^2 + \sigma^2/\tau^2$.

The Bayes estimator for minimizing the prediction error is also given by $\thetahat_K = \Sigma^{-1} (X')^\top y$ and $\thetahat_{-K} = 0$. Thus, the Bayes risk is lower bounded by:
\begin{align*}
	\RBayes(w,\Lpre;\Theta)& = \frac{1}{ n}\E[\ltwos{X'(\Sigma^{-1} (X')^\top y - \theta')}^2] = \frac{\sigma^2}{n} \tr(X'\Sigma^{-1}(X')^\top)\\
	&\geq \frac{\sigma^2}{\kappa_\ell^2 \tau^2 n + \sigma^2}\cdot \kappa_\ell^2 k\tau^2,
\end{align*}
where the last inequality uses the sparse eigenvalue condition --- it guarantees that all eigenvalues of the matrix $X'\Sigma^{-1}(X')^\top$ are greater than or equal to $\frac{n \kappa_\ell^2}{n \kappa_\ell^2 + \sigma^2/\tau^2}$.

\subsection{Proof of Claim~\ref{claim:sparse-informativity}}
\label{sec:proof-sparse-informativity}

Corollary~\ref{cor:chi_dist_upper} shows that the chi-square informativity can be bounded using the covering number $M_{\chi^2}(\epsilon, \Thetabar)$. Consider the zero vector $\theta_0 \defeq 0$ and an arbitrary vector $\theta \in \Thetabar$. Their response vectors are generated from $P_{\theta_0} \defeq N(0, \sigma^2I)$ and $P_\theta \defeq N(X\theta, \sigma^2I)$, so that the chi-square divergence between $P_{\theta_0}$ and $P_{\theta}$ is equal to $\chi^2(P_{\theta_0}\|P_{\theta}) = \exp(\ltwos{X\theta}^2/\sigma^2) - 1$.
By the sparse eigenvalue condition and the fact that $\ltwos{\theta}^2 \leq 2 k \tau^2$, we have
\[
\chi^2(P_{\theta_0}\|P_{\theta}) \leq \exp(\kappa_u^2 n \ltwos{\theta}^2/\sigma^2) - 1 \leq \exp(2\kappa_u^2 \tau^2 k n /\sigma^2) - 1.
\]
It means that if we choose $\epsilon^2 = \exp(2\kappa_u^2 \tau^2 k n /\sigma^2) - 1$, then $M_{\chi^2}(\epsilon, \Thetabar) = 1$, so that the chi-square informativity is bounded by
\begin{align}\label{eqn:sparse-informativity-bound-1}
	I_{\chi^2}(\wbar, \mathcal{P}) + 1 \leq (1+\epsilon^2)M_{\chi^2}(\epsilon, \Thetabar) =  \exp(2\kappa_u^2 \tau^2 k n /\sigma^2).
\end{align}

\subsection{Proof of Claim~\ref{claim:euclidean-ball-covering-bound}}
\label{sec:proof-sparse-euclidean-ball}

Consider an arbitrary vector $a\in \R^d$, and let $I_a$ be the set of indices defined by:
\[
	I_a = \{i\in [d]:~|a_i| \geq \tau/4\}.
\]
If $|I_a| > 2k$,  then for any $\theta\in\Thetabar$, there are at least $k+1$ coordinates such that $\theta_i = 0$ but $|a_i| \geq \tau/4$. It means that $\ltwos{a - \theta}> \frac{1}{4}\sqrt{k}\tau$. Since $r \leq \frac{1}{8}\sqrt{k}\tau$, we have  $\wbar(B(a, r)) = 0$.

Otherwise, we assume that $|I_a| \leq 2k$. Given an index set $K$, let $w_K$ and $\wbar_K$ be the conditional version of the prior distribution $w$ and $\wbar$, conditioning on the fact that the $k$-sparse index set is $K$. Recall that for any $\theta$ in the support of $\wbar_K$, there are at least $k/2$ coordinates such that $|\theta_i| \geq \tau/2$. If $|I_a \cap K| < k/4$, then there at least $k/4$ coordinates such that $|\theta_i| \geq \tau/2$ but $|a_i| < \tau/4$. It means that $\ltwos{a - \theta}> \frac{1}{8}\sqrt{k}\tau$ for any $\theta$ in the support of $\wbar_K$, and as a consequence, we have $\wbar_K(B(a,r)) = 0$.

Thus, a necessary condition for $\wbar_K(B(a,r)) > 0$ to hold is $|I_a \cap K| \geq k/4$. Given  $|I_a| \leq 2k$, the number of index set $K$ satisfying this constraint is bounded by ${2k \choose k/4}{d-k/4 \choose 3k/4}$. To prove this bound, notice that every set $K$ satisfying $|I_a \cap K| \geq k/4$ can be generated by the following two-step procedure: first, generate $k/4$ element from $I_a$; second, generate the remaining $3k/4$ elements from the remaining $d-k/4$ integers of $\{1,\dots,d\}$. There are totally ${2k \choose k/4}{d-k/4 \choose 3k/4}$ ways of generating the set. We note that the same $K$ can have multiple ways to generate, so that the above combinatorial number is a strict upper bound on the number of sets.

For any set $K$ satisfying the above constraint, we have:
\begin{align}\label{eqn:from-truncated-normal-back-to-normal}
	\wbar_K(B(a,r)) \leq 4 w_K(B(a,r)) \leq 4 w_K(B(0,r)),
\end{align}
where the last equation holds because $w_K$ represents an isotropic normal distribution in $\R^k$, so that the maximum probability is achieved by centering at the origin. The right-hand side of inequality~\eqref{eqn:from-truncated-normal-back-to-normal} the probability a $k$-dimension normal random variable $X\sim N(0, \tau^2 I_{k\times k})$ satisfying $\ltwos{X}\leq r$. As we showed in the proof of Lemma~\ref{lemma:gaussian-prior}, this probability is bounded by $(\frac{c\,r}{\sqrt{k}\tau})^k$ for a universal constant $c > 0$. Putting pieces together, we have
\begin{align*}
	\sup_{a\in \A} \wbar(B(a,r)) \leq \frac{{2k \choose k/4}{d-k/4 \choose 3k/4}}{{d\choose k}} \cdot 4\Big(\frac{c\,r}{\sqrt{k}\tau}\Big)^k.
\end{align*}
By the definition of the combinatorial numbers, we have:
\begin{align*}
	\frac{{2k \choose k/4}{d-k/4 \choose 3k/4}}{{d\choose k}} & = {2k \choose k/4} \frac{k!(d-k/4)!}{(3k/4)! d!} \leq {2k \choose k/4}\frac{k!}{(3k/4)!}  \frac{1}{d^{k/4}}\\
	&\leq \frac{(2k)^{k/2}}{d^{k/4}} = \Big( \frac{d}{4k^2}\Big)^{- k/4}.
\end{align*}
Combining the two upper bounds above completes the proof.

\vskip 0.2in

\bibliographystyle{chicago}
\bibliography{AG}

\end{document}